\documentclass[12pt]{amsart}
\usepackage{color}
\usepackage[all]{xy}
\usepackage{amssymb,amsmath}
\usepackage{amscd,amsmath,latexsym,amssymb}
\usepackage[mathcal]{euscript}

\setlength{\textwidth}{16truecm}
\setlength{\textheight}{22.5truecm}

\date{August 19, 2020}

\calclayout
\newtheorem{dummy}{anything}[section]
\newtheorem{theorem}[dummy]{Theorem}
\newtheorem*{thma}{Theorem A}
\newtheorem*{thmb}{Theorem B}

\newtheorem{lemma}[dummy]{Lemma}
\newtheorem{proposition}[dummy]{Proposition}
\newtheorem{corollary}[dummy]{Corollary}

\theoremstyle{definition}
\newtheorem{definition}[dummy]{Definition}
  \newtheorem{example}[dummy]{Example}
  
  \newtheorem{remark}[dummy]{Remark}
   \newtheorem*{nonum}{Theorem}
   
    \newtheorem*{question}{Question}
    \newtheorem*{Conjecture}{Conjecture}
  \newtheorem*{acknowledgement}{Acknowledgement}

  


\newcommand
{\eqncount}{\setcounter{equation}{\value{dummy}}%
\addtocounter{dummy}{1}}

\newcommand{\cF}{\mathcal F}

\newcommand{\bF}{\mathbf F}

\newcommand{\bk}{\mathbf k}

\newcommand{\bZ}{\mathbb Z}

\newcommand{\CP}{\mathbb C \mathbb P}

\newcommand{\bbC}{\mathbb C}
\newcommand{\bbF}{\mathbb F}

\newcommand{\bbR}{\mathbb R}

\newcommand{\bbZ}{\mathbb Z}

\DeclareMathOperator{\Hom}{Hom} 

\DeclareMathOperator{\rk}{rank}
\DeclareMathOperator{\image}{Im}
\DeclareMathOperator{\Ext}{Ext}

\DeclareMathOperator{\Res}{Res}

\DeclareMathOperator{\coker}{coker}

  \DeclareMathOperator{\Fix}{Fix}
  \DeclareMathOperator{\Tors}{Tors}

\newcommand{\cy}[1]{\bZ_{#1}}

\newcommand{\la}{\langle}
\newcommand{\ra}{\rangle}

\newcommand{\disjointunion}{\bigsqcup}

\newcommand{\vv}{\, | \,}

\newcommand{\Fp}{\bbF_p}


\newcommand{\ZG}{\bZ G}

\newdir{ >}{{}*!/-5pt/\dir{>}}
\newdir{ (}{{}*!/-5pt/^\dir{(}}

\newcommand{\RP}{\bbR \mathbb P}
\DeclareMathOperator{\Ess}{Ess}

\begin{document}

\title{Rank conditions for finite group actions on $4$-manifolds}
\author{Ian Hambleton}
\author{Semra Pamuk}

\address{Department of Mathematics, McMaster University
L8S 4K1, Hamilton, Ontario, Canada}

\email{hambleton@mcmaster.ca }

\address{Department of Mathematics, Middle East Technical University, 06800, Ankara, Turkey }

\email{pasemra@metu.edu.tr }


\subjclass[2010]{57S17, 57S25, 57N65} 
\thanks{This project was partially supported by NSERC Discovery Grant A4000. The first author wishes to thank the Max Planck Institut f\"ur Mathematik for its hospitality and support in November 2019. The second author was supported on sabbatical by METU, and thanks McMaster University for its hospitality during the academic year 2018-19.}

\begin{abstract}
Let $M$ be a closed, connected, orientable topological $4$-manifold, and $G$ be a finite group acting topologically and locally linearly on $M$. In this paper we investigate the spectral sequence for the
Borel cohomology $H^*_G(M)$, and establish new bounds on the rank of $G$ for homologically trivial actions with discrete singular set.
\end{abstract}

\maketitle

\section{Introduction}
\label{sec:one} 

In this paper we provide some new information about the existence of finite group actions on closed, connected, orientable $4$-manifolds. In this dimension, the comparison between smooth and topological group actions is particularly interesting. Our focus  will be on \emph{locally linear topological actions} as background for future work on smooth actions.

For free actions on simply-connected $4$-manifolds, or equivalently for closed topological $4$-manifolds with finite fundamental group, there are a number of classification results in the literature (for example, see \cite{hk2,hk6,hk5,hkt1}). One challenging open problem is to compute the Hausmann-Weinberger invariant \cite{Hausmann:1985}, namely to determine the minimal Euler characteristic of a $4$-manifold with a given fundamental group. The answer is only known at present in special cases (see \cite{Kirk:2009}).

We will extend the scope of previous work by including  \emph{non-simply-connected} manifolds, and concentrate on non-free actions. We often assume that the actions are \emph{homologically trivial}, meaning that the group of symmetries acts as the identity on the  homology groups of the manifold. 

 A useful measure of the complexity of a finite group $G$ is its \emph{$p$-rank}, defined as the maximum rank $r$ of an elementary abelian $p$-group $(\cy p)^r  \leq  G$. We let  $\rk_p(G)$ denote the $p$-rank of $G$ for each prime $p$, and  let  $\rk (G)$ denote the maximum over all primes  of the $p$-ranks.
 
\begin{question}
Given a closed orientable $4$-manifold $M$, what is the maximum value of $\rk(G)$ over all the finite groups $G$ which act effectively, locally linearly,  and homologically trivially on $M$ ?
\end{question}

We note that a $\cy p$-action,  for $p$ a prime, will be homologically trivial if  $M$ has torsion-free homology and $(p-1)$ is larger than each of the Betti numbers  of $M$. If $M$ has Euler characteristic $\chi(M) \neq 0$, then any homologically trivial action of a finite group must be non-free (by the Lefschetz fixed-point theorem). 

Beyond the rank restrictions, we would like to know which finite groups $G$ can act. For example, if  $M$ is the connected sum of two or more complex projective planes, then $G$ is abelian and  $\rk(G) \leq 2$ (see \cite{hlee3}). This was proved for smooth actions using techniques from gauge theory.  Then  McCooey \cite{McCooey:2002a}, building on earlier work by Edmonds \cite{Edmonds:1998},   used  methods from equivariant algebraic topology to prove a much stronger result:

\begin{nonum}[{\rm McCooey \cite[Theorem 16]{McCooey:2002a}}]
Let $G$ be a (possibly finite) compact Lie group, and suppose $M$ is a closed $4$-manifold with $H_1(M;\bZ) = 0$ and $b_2(M) \geq 2$, equipped with an effective, locally linear, homologically trivial $G$-action.
\begin{enumerate}
\item If $b_2(M) =2$ and $\Fix(M)\neq \emptyset$, then $G$ is isomorphic to a subgroup of 
$S^1 \times S^1$.
\item If $b_2(M) \geq 3$ then $G$ is isomorphic to a subgroup of 
$S^1 \times S^1$, and the fixed set $\Fix(M)$ is necessarily
nonempty.
\end{enumerate}
\end{nonum}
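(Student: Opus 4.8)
The plan is to couple the Borel cohomology spectral sequence of the action with the local linear structure at the fixed set, using homological triviality and Poincar\'e duality to make both explicit.

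\textbf{Reductions.}\ Since $M$ is a finite complex the identity component $G^0$ acts trivially on $H_*(M)$, so every finite subgroup of $G^0$ is homologically trivial; once the finite-group case is known this forbids $G^0$ from containing a nonabelian finite subgroup or a copy of $(\cy p)^3$, so $G^0$ is a torus of dimension $\le 2$ and the structure theory of compact Lie groups gives $G\hookrightarrow S^1\times S^1$. Replacing $G$ by a subgroup of index $\le 2$ we may assume the action is orientation-preserving. It therefore suffices to prove: a finite group acting effectively, locally linearly and homologically trivially on $M$ is abelian with $\rk(G)\le 2$, and has $\Fix(M)\ne\emptyset$ when $b_2(M)\ge 3$. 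By Smith theory and an induction along a composition series, the rank and fixed-point statements reduce to the case $G=(\cy p)^r$.

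\textbf{Fixed points for $b_2\ge 3$.}\ Because $H_1(M;\bZ)=0$, Poincar\'e duality forces $H^*(M;\bbF_p)$ to be one-dimensional in degrees $0$ and $4$ and $b_2$-dimensional in degree $2$; hence the Borel spectral sequence for $EG\times_G M$ has $E_2=H^*(BG;\bbF_p)\otimes H^*(M;\bbF_p)$ with trivial coefficients, and for dimension reasons its only possibly nonzero differentials are $d_3$ (off the degree-$2$ and degree-$4$ rows) and $d_5$ (off the degree-$4$ row). As $d_3$ is a morphism of $H^*(BG;\bbF_p)$-modules, a rank count on the successive pages shows that $E_\infty$ still carries $H^*(BG;\bbF_p)$-rank at least $b_2-2$ in the degree-$2$ row. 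If $b_2\ge 3$ this makes $H^*_G(M;\bbF_p)$ non-torsion over $H^*(BG;\bbF_p)$, so by the Localization Theorem and Quillen stratification the fixed set of every maximal elementary abelian $p$-subgroup is nonempty; tracking the residual actions on these fixed submanifolds then upgrades this to $\Fix(G)\ne\emptyset$. When $b_2=2$ the estimate degenerates and even these fixed sets can be empty, which is why (i) keeps $\Fix(M)\ne\emptyset$ as a hypothesis.

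\textbf{Local linearity, rank and commutativity.}\ Fix a prime $p$, a maximal elementary abelian $p$-subgroup $E\le G$, and (from the previous step when $b_2\ge 3$, or using $\Fix(M)\ne\emptyset$ when $b_2=2$) a point $x$ fixed by $E$. Local linearity gives an effective orthogonal tangent representation $E\hookrightarrow\mathrm{SO}(4)$ at $x$, together with equivariant tubular neighbourhoods of the surface and point components of the fixed sets of subgroups. For odd $p$ this already yields $\rk_p(G)=\rk_p(E)\le\rk_p(\mathrm{SO}(4))=2$. For $p=2$, and to eliminate the nonabelian finite subgroups of $\mathrm{SO}(4)$, one feeds homological triviality into the $G$-signature theorem and the Euler-characteristic congruences for the codimension-$2$ fixed surfaces of involutions: since $g^*=\id$ and $\operatorname{sign}(g,M)=\operatorname{sign}(M)$ for every $g$, the normal data of these surfaces is pinned down tightly enough to be incompatible with three independent commuting involutions, or with a nonabelian isotropy representation. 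The same local-to-global analysis, applied across all isotropy subgroups, forces $G$ abelian with $\rk(G)\le 2$ and, finally, $\Fix(G)\ne\emptyset$; reinstating the Lie hypothesis then gives $G\hookrightarrow S^1\times S^1$. The rank bound here is essentially due to Edmonds; the improvement to an honest subgroup of the $2$-torus is McCooey's contribution.

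\textbf{The main obstacle.}\ The delicate point is the $p=2$ analysis of the last step: one must extract from the global data---homological triviality, the unimodular intersection form, and the $G$-signature theorem---enough control over the local fixed set to simultaneously rule out $\rk_2(G)\ge 3$ and every nonabelian isotropy type. This is exactly the local-to-global bookkeeping for the Borel spectral sequence that the present paper sets out to sharpen.
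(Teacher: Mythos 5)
This statement is not proved in the paper at all: it is quoted as McCooey's theorem \cite[Theorem 16]{McCooey:2002a} and used purely as background, so there is no internal proof to compare your attempt against. Judged on its own terms, your proposal is a reasonable roadmap of the Edmonds--McCooey strategy (Borel spectral sequence plus the Localization Theorem to produce fixed points when $b_2\geq 3$; tangent representations at fixed points to bound the rank), and the spectral-sequence estimate you give for the degree-$2$ row is essentially correct since $H^1(M;\Fp)=H^3(M;\Fp)=0$. But it is not a proof, and the gaps are concentrated exactly where the theorem is hard.

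Concretely: (1) the reduction ``by Smith theory and an induction along a composition series\dots to the case $G=(\cy p)^r$'' can at best bound $\rk_p(G)$; it cannot show that $G$ is abelian, and a finite group all of whose elementary abelian subgroups have rank $\leq 2$ need not be abelian (quaternion and dihedral groups already fail), nor does a finite abelian group embed in $S^1\times S^1$ unless every Sylow subgroup is $2$-generated --- a property of $G$ itself, not of its elementary abelian subgroups. Ruling out the nonabelian finite subgroups of $SO(4)$ as isotropy groups, and the $p=2$ rank-$3$ case (note $\rk_2(SO(4))=3$, so the local representation alone does not suffice), is the core of McCooey's argument; you assert that the $G$-signature theorem ``pins down'' the normal data ``tightly enough'' but give no computation, and your own closing paragraph concedes this. (2) The passage from ``every maximal elementary abelian $p$-subgroup has nonempty fixed set'' to ``$\Fix(G)\neq\emptyset$'' in part (ii) is dismissed in one clause (``tracking the residual actions\dots upgrades this''), yet for a general compact Lie group this upgrade is itself a substantial step. (3) Passing to an index-$2$ orientation-preserving subgroup does not by itself yield the conclusion for $G$. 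So the architecture is plausible, but the theorem is not established by what is written.
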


What should we expect for actions on arbitrary \emph{non-simply-connected} $4$-manifolds~? Here is possible uniform answer to the rank question (compare \cite[Conjecture 9.1]{Edmonds:1998}).
\begin{Conjecture}  If a finite group $G$ acts effectively, locally linearly,  and homologically trivially on  a closed  orientable $4$-manifold $M$ with Euler characteristic $\chi(M) \neq 0$, then $\rk_p(G) \leq 2$ for $p$ odd.
\end{Conjecture}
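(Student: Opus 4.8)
\medskip
The plan is to reduce at once to showing that, for every odd prime $p$, no closed connected orientable $4$-manifold $M$ with $\chi(M)\neq 0$ admits an effective, locally linear, homologically trivial action of $(\cy p)^3$: indeed $\rk_p(G)\geq 3$ would let us restrict the $G$-action to such a subgroup, and restriction preserves effectiveness, local linearity and homological triviality. Two facts are then in hand. First, homological triviality forces the Lefschetz number of every group element to equal $\chi(M)\neq 0$, so $M^C\neq\emptyset$ for every subgroup $C\leq(\cy p)^3$ of order $p$; more generally Smith theory gives $\chi(M^H)\equiv\chi(M)\pmod p$ for every subgroup $H$. Second, at any point $x$ fixed by a subgroup $A$ there is an $A$-invariant chart on which $A$ acts linearly through some $\rho\colon A\to O(4)$, and if $g\in\ker\rho$ then $g$ is the identity on an open set; since $\{y\in M : g \text{ is the identity near } y\}$ is clopen in the connected manifold $M$, this forces $g=\id$, so $\rho$ is faithful. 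An abelian group of odd order in $O(4)$ lies in $SO(4)$ and, being a real representation with no non-trivial $1$-dimensional summand, splits into trivial and $2$-dimensional rotation pieces, hence is conjugate into $SO(2)\times SO(2)=T^2$; thus $\rk_p(A)\leq 2$ for every isotropy subgroup $A$. In particular a global fixed point would already contradict $\rk_p=3$, so the remaining case is $\Fix(M)=\emptyset$.

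\medskip
For that case I would invoke the Borel construction $M_G=EG\times_G M$ and its spectral sequence $E_2^{s,t}=H^s\bigl(BG;H^t(M;\Fp)\bigr)\Rightarrow H^{s+t}_G(M;\Fp)$, with untwisted coefficients because the action is homologically trivial. Quillen's theorem identifies the Krull dimension of $H^*_G(M;\Fp)$ with $\max\{\rk_p(A) : A\leq G \text{ elementary abelian},\ M^A\neq\emptyset\}$, and every such $A$ fixes a point, so by the previous paragraph $\dim H^*_G(M;\Fp)\leq 2$. Playing this against the structure of the spectral sequence --- $H^*\bigl(B(\cy p)^3;\Fp\bigr)$ is polynomial on three degree-$2$ classes tensored with an exterior algebra on three degree-$1$ classes, the fibre cohomology $H^t(M;\Fp)$ lives only in the band $0\leq t\leq 4$ with $\dim H^0=\dim H^4=1$ and $\dim H^1=\dim H^3$ by Poincar\'e duality, and the polynomial part in $E^{2k,0}$ can only be reduced by differentials out of the four columns $E^{2k-j,\,j-1}$, $j=2,3,4,5$ --- one tries to conclude that $\dim H^*_G(M;\Fp)\geq 3$ unless the singular set is extremely constrained. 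The spectral sequence alone does not quite pin down the rank; the extra leverage should come from a localization identity writing $\chi(M)$ as a sum of local contributions over the components of the singular set $M^{\mathrm{sing}}=\bigcup_{g\neq 1}M^g$, the $p$-adic size of each term being read off from the isotropy representation, so that $\chi(M)\not\equiv 0$ clashes with $\Fix(M)=\emptyset$ once all isotropy ranks are $\leq 2$. When $M^{\mathrm{sing}}$ is finite this can be pushed through: every $M^C$ is a finite set with $|M^C|\equiv\chi(M)\pmod p$ for all $\tfrac{p^3-1}{p-1}$ subgroups $C$ of order $p$, the points with rank-$2$ isotropy are isolated and belong to $M^C$ for each of the $p+1$ order-$p$ subgroups $C$ they contain, and counting against these congruences produces a point fixed by all of $(\cy p)^3$ --- the contradiction. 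This is the regime settled in the present paper.

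\medskip
The main obstacle is the opposite regime, in which some $M^C$ has a $2$-dimensional component, i.e. there are $G$-invariant surfaces $F\subset M$. Then the localized equivariant cohomology acquires contributions from $H^*_G(F)$ and from the equivariant Euler classes of the normal discs along $F$, the spectral-sequence differentials interact with $H^*(F)$ rather than with the cohomology of a point, and --- most seriously --- a group of large rank could a priori act with every isotropy subgroup of rank $\leq 2$ while that rank is spread across several surface strata, so that no single local model detects it. I would attack this by stratifying $M$ by orbit type, running a Mayer--Vietoris comparison among the equivariant cohomologies of the strata and their equivariant tubular neighbourhoods, and using that an odd-order abelian group acting effectively on a normal $2$-disc fibre is cyclic to bound the transverse isotropy rank along each surface by $1$; the hard part, and the reason the statement remains conjectural, is to assemble these local rank-$1$ and rank-$2$ bounds into the global bound $\rk_p(G)\leq 2$. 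That step seems to need genuinely new input --- compatibility constraints among the normal Euler classes of the various surface strata, or a structural analysis of the induced $G$-action on $H_1(M)$ and on the set of connected components of $M^{\mathrm{sing}}$.
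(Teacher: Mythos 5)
This statement is labeled as a \emph{Conjecture} in the paper, and the paper does not prove it; there is no proof of the authors' to compare your attempt against. Your proposal is honest about this --- it is a strategy sketch with an explicitly acknowledged gap, not a proof --- so the right assessment is of the sketch itself. The reductions in your first paragraph are sound and match the paper's framing: restriction to $(\cy p)^3$, faithfulness of the isotropy representation via the clopen-fixed-locus argument for locally linear actions, non-emptiness of $M^C$ from the Lefschetz number, and the conjugation of an odd-order abelian isotropy group into $SO(2)\times SO(2)$, which bounds every isotropy rank by $2$ and in particular rules out a global fixed point (this is essentially the Mann--Su/Smith observation the paper records in its Remark after the Conjecture).

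Where your sketch touches the regime the paper actually settles (Theorem B, the pseudofree case), your proposed mechanism differs from theirs. You suggest a fixed-point-counting argument: congruences $|M^C|\equiv\chi(M)\pmod p$ over all order-$p$ subgroups, combined with the fact that rank-$2$ isotropy points lie on $p+1$ of the $M^C$, forcing a $(\cy p)^3$-fixed point. The paper instead compares $H^*_G(M)$ with $H^*_G(\Sigma)$ via Edmonds' isomorphism in degrees $>4$, observes that all differentials landing on the base line $E_r^{*,0}$ must have image in the essential cohomology $\Ess^*(G)$, and then uses the Aksu--Green description of $\Ess^*(G)$ as a free module on the M\`ui generators to show the higher generators (degrees $2p+1$, $2p+2$) cannot be hit, contradicting the dimension bound \eqref{eq:dimcount}. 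Your counting heuristic is not obviously wrong, but it is not carried out, and naive inclusion--exclusion over the $\tfrac{p^3-1}{p-1}$ cyclic subgroups does not by itself manufacture a common fixed point; the paper's route through essential cohomology is doing real work that your sketch would need to replace. Finally, your identification of the genuine obstruction --- two-dimensional components of the singular set, where isotropy rank can be spread across several surface strata --- is exactly why the statement remains a conjecture, and your proposal does not close that gap.
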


The condition $\chi(M) \neq 0$ rules out actions on $M = T^4$ (for example), but the group $G = (\cy 2)^4$ acts linearly on $S^4$, so additional conditions must be found for $p=2$.

\begin{remark}
Mann and Su \cite[Theorem 2.2]{Mann:1963}) showed that $\rk_p(G)\leq 2$, for $p$ an odd prime, provided that the fixed set $\Fix(M) \neq \emptyset$,  without assuming that the action was locally linear or homologically trivial. However, the existence of a global fixed point is a strong assumption:
 in the locally linear case the result follows easily from a result of P.~A.~Smith  \cite[\S4]{Smith:1960} applied to the boundary $S^3$ of a $G$-invariant $4$-ball at a point $x\in \Fix(M)$. 
\end{remark}

The main tool from equivariant algebraic topology used for the study of non-free group actions is the Borel spectral sequence. Let $BG$ denote the classifying space for principal $G$-bundles, and $EG$  the universal free, contractible $G$-space. Then the Borel cohomology $H_G^*(M) := H^*(M \times_G EG)$ is ``computable"  in principle from the Serre spectral sequence of the fibration $M \to M \times_G EG \to BG$.  
We will use integral coefficients or $\Fp$-coefficients for $H^*(M)$, but note that in general this is a local coefficient system for the group cohomology of $G$. For homologically trivial actions, we have ordinary coefficients.


 \begin{thma}
 Suppose that $G=\cy p$ acts locally linearly on a closed, connected, oriented $4$-manifold $M$, preserving the orientation, with fixed point set $F = \Fix(M) \neq \emptyset$.    \begin{enumerate}
 \item  If the map $H_1(F;\bZ)\twoheadrightarrow H_1(M;\bZ)$ is surjective, then the Borel spectral sequence for $H^*_G(M)$ collapses with integral and  $\Fp$ coefficients.
 \item If $\ker(H^1(M;\bZ) \to H^1(F;\bZ))$ is non-trivial, but has trivial $G$-action, then the Borel spectral sequence with integral coefficients does not collapse.
 \end{enumerate}
 \end{thma}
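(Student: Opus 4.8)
The plan is to study the Leray--Serre spectral sequence $E_2^{s,t}=H^s(BG;\cH^t(M;R))\Rightarrow H^{s+t}_G(M;R)$ of $M\to M\times_G EG\to BG$, for $R=\bZ$ and $R=\Fp$, by comparison with the analogous spectral sequence for $F$. Since $G$ fixes $F$ pointwise, the latter has trivial coefficients and degenerates, so the inclusion $F\hookrightarrow M$ induces a map of spectral sequences, and as $d_r^F=0$ the image of every differential $d_r^M$ lies in $\ker\!\bigl(E_r(M)\to E_r(F)\bigr)$. Two reductions come first. For $x\in F$ the composite $BG=x\times_G EG\hookrightarrow M\times_G EG\to BG$ is the identity, so the edge map $H^*(BG;R)\to H^*_G(M;R)$ is split injective; hence the bottom row $E^{*,0}$ is permanent and no differential hits row $0$. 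Dually, applying an equivariant Gysin homomorphism to a component of $F$ (with suitable orientation conventions, automatic for $R=\Fp$) shows that the fibre fundamental class $[M]^\vee\in H^4(M;R)$ lies in the image of the edge map $H^4_G(M;R)\to H^4(M;R)$, so by the Leibniz rule the top row $E^{*,4}$ is permanent as well. It follows that the only possibly nonzero differentials are $d_2\colon E^{s,2}\to E^{s+2,1}$, $d_2\colon E^{s,3}\to E^{s+2,2}$ and $d_3\colon E^{s,3}\to E^{s+3,1}$.

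For (i), surjectivity of $H_1(F;\bZ)\to H_1(M;\bZ)$ --- a $G$-map with trivial source --- forces $G$ to act trivially on $H_1(M;\bZ)$, hence on $H^1(M;R)$ and, by Poincar\'e duality, on $H^3(M;R)$; it also makes $H^1(M;\bZ)\to H^1(F;\bZ)$ injective with torsion-free cokernel, whence (a standard universal-coefficient argument, all groups involved being free over $\bZ$) $H^s(BG;H^1(M;R))\hookrightarrow H^s(BG;H^1(F;R))$ for all $s$. Both differentials with target in row $1$ then vanish: since no earlier differential meets rows $1$ or $3$ of $M$, the relevant entries are $E_r^{\ast,1}(M)=H^{\ast}(BG;H^1(M;R))$ and these restrict injectively to $E_r^{\ast,1}(F)$, whereas the image of $d_r^M$ lies in the kernel of restriction. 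For the remaining differential $d_2\colon E^{s,3}\to E^{s+2,2}$ I would invoke Poincar\'e duality in the fibre: the spectral sequence is self-dual under $t\mapsto 4-t$, so this $d_2$ is controlled by (indeed determined by) $d_2\colon E^{s,2}\to E^{s+2,1}$, which has just been shown to vanish. Hence $E_2=E_\infty$ with both $R=\bZ$ and $R=\Fp$.

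For (ii), assume the integral spectral sequence collapses. Set $K=\ker\!\bigl(H^1(M;\bZ)\to H^1(F;\bZ)\bigr)$. The hypotheses give $K\ne 0$ and that $G$ acts trivially on $K$; since $H^1(M;\bZ)/K$ embeds in the $G$-trivial group $H^1(F;\bZ)$ and $\Ext^1_{\bZ G}(\bZ,\bZ)=0$, it follows that $H^1(M;\bZ)$ is a trivial $\bZ G$-module, and since $K$ is a direct summand of the free group $H^1(M;\bZ)$ we may choose a primitive $\alpha\in K$. Because $H^1(BG;\bZ)=0$, the edge map is an isomorphism $H^1_G(M;\bZ)\xrightarrow{\ \sim\ }H^1(M;\bZ)$ and $H^1_G(F;\bZ)=H^1(F;\bZ)$; let $\tilde\alpha$ be the preimage of $\alpha$, so that $\tilde\alpha|_F\mapsto\alpha|_F=0$ forces $\tilde\alpha|_F=0$. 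Using collapse, for each $k\ge 1$ the class $p^*(t)^k\tilde\alpha\in H^{2k+1}_G(M;\bZ)$ (with $t$ a generator of $H^2(BG;\bZ)$ and $p\colon M\times_G EG\to BG$) represents the image of $\alpha$ in $E_\infty^{2k,1}=H^{2k}(BG;H^1(M;\bZ))\cong H^1(M;\bZ)/p$, which is nonzero since $\alpha$ is primitive; hence $p^*(t)^k\tilde\alpha\ne 0$ for all $k$. On the other hand, the localization theorem for the $\cy p$-action, applied to $\tilde\alpha|_F=0$, yields $p^*(t)^k\tilde\alpha=0$ for $k\gg 0$ --- a contradiction. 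Therefore the integral spectral sequence does not collapse.

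The principal obstacle is the fibre Poincar\'e-duality step in (i): identifying the third-row $d_2$ with the second-row $d_2$ requires a precise self-duality statement for the Leray--Serre spectral sequence of a fibration whose fibre is a closed oriented topological $4$-manifold with $G$-invariant orientation, and this must be handled with some care at the level of integral coefficients (where the coefficient module $\cH^2(M)$ need not be trivial). A lesser point, in (ii), is bookkeeping with the $H^*(BG)$-module structure so that collapse genuinely forces $p^*(t)^k\tilde\alpha\ne 0$; this is exactly where primitivity of $\alpha$ and triviality of the module $H^1(M;\bZ)$ are used.
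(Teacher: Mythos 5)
Your reduction of the problem is sound and in places cleaner than the paper's: the permanence of the rows $t=0$ and $t=4$, the observation that every differential has image in $\ker\bigl(E_r(M)\to E_r(F)\bigr)$, and the resulting vanishing of $d_2^{s,2}$ and $d_3^{s,3}$ via the injectivity of $H^*(G;H^1(M;R))\to H^*(G;H^1(F;R))$ are all correct (the cokernel of $H^1(M;\bZ)\to H^1(F;\bZ)$ is indeed torsion free, being the dual of a surjection). This is a genuinely different and more economical route to those differentials than the paper's, which instead compares with the pairs $(M,\{x\})$, $(M,A)$ and the open complement $M\mbox{-}A$ of a $1$-skeleton $A\subset F$. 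Your proof of part (ii) is also essentially correct and equivalent to the paper's Proposition \ref{prop:fourone}: both arguments ultimately rest on $H^q_G(M,F)=0$ for $q>4$; you package it through the $H^*(BG)$-module structure and a primitive class in $K$, the paper through the spectral sequence of the pair.

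The genuine gap is exactly where you flagged it: the differentials $d_2^{s,3}\colon E_2^{s,3}\to E_2^{s+2,2}$, and specifically the case of \emph{integral coefficients with $s$ odd}. The self-duality you invoke would deduce $d_2^{s,3}=0$ from $d_2^{*,2}=0$ via the Leibniz rule and a non-degenerate pairing $H^{s+2}(G;H^2(M))\times H^{s'}(G;H^2(M))\to H^{s+s'+2}(G;H^4(M))$. Over $\bZ$ this pairing degenerates precisely where it is needed: for $s$ odd the group $H^s(G;H^3(M;\bZ))$ is supported on $\Tors H_1(M)$, the intersection form is unimodular only on $H^2(M)/\Tors$, and the $G$-action on $H^2(M)$ need not be trivial (the hypothesis only controls $H_1$, $H^1$, $H_3$, $H^3$). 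So torsion classes in $H^{s+2}(G;H^2(M))$ pair trivially with everything, and duality gives no control over $\image d_2^{s,3}$ when $H_1(M)$ has $p$-primary torsion. This is exactly the case the paper treats last and at greatest length: it first proves collapse with $\Fp$ coefficients, then compares $\dim_\bk H^5_G(M)+\dim_\bk H^6_G(M)$ with $\dim_\bk H^5_G(M;\Fp)$ via the Bockstein sequence \eqref{eq:dima}, Lemma \ref{lem:dimc}, the decomposition of $H^2(M)/\Tors$ as a $\ZG$-lattice, and the ``split type'' Lemma \ref{lem:splittype}, to force $\image d_2^{1,3}=0$. Without some substitute for this dimension count, your proof of part (i) is complete only under the additional hypothesis that $H_1(M)$ has no $p$-torsion (in which case $H^{odd}(G;H^3(M;\bZ))=0$ and the problem disappears, as the paper also notes). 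A secondary, smaller issue: even with $\Fp$ coefficients, your duality step for $d_2^{s,3}$ requires non-degeneracy of the cup-product pairing on $H^*(G;H^2(M;\Fp))$ for a module with possibly non-trivial $G$-action; this is a form of Tate duality that is true but needs justification, whereas the paper only ever uses the pairing between $H^1(M;\Fp)$ and $H^3(M;\Fp)$, which are trivial modules under the hypothesis.
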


  Edmonds \cite[Prop.~2.3]{Edmonds:1989} showed that the Borel spectral sequence with integral or $\Fp$ coefficients collapses for orientation preserving  $\cy p$  actions with  $F\neq \emptyset$ on closed simply-connected $4$-manifolds.  We generalize this result  to non-simply-connected 4-manifolds. 

The $2$-dimensional components of $\Fix(M)$ are always orientable if $p >2$, and for $p=2$ this would follow, for example, by assuming that $G$ preserves a $Spin^c$ structure in a suitable sense (see \cite[Proposition 3.2]{Edmonds:1989} and Ono \cite[Section 4]{Ono:1993}). However, the complex conjugation involution on $\CP^2$ with fixed set $\RP^2$ shows that orientability of the fixed set is not necessary in general for the collapse of the Borel spectral sequence  with integral coefficients (see \cite[Prop.~2.3]{Edmonds:1989}).

%

In his arXiv paper  \cite[Proposition 3.1]{McCooey:2013} McCooey proposed  a  ``collapse" result for homologically trivial actions under the assumption that $H_1(M)$ is torsion-free, but without our condition on $H_1(F)$ (see Example \ref{ex:two} for a counter-example).

\begin{remark}
 Note that if   $H_1(F)\twoheadrightarrow H_1(M)$ is surjective, then $H^1(M)\rightarrowtail H^1(F)$ is injective, but not conversely if $H_1(M)$ has torsion. 
\end{remark}

Recall that an action is called \emph{pseudofree} if the singular set $\Sigma: = \Sigma(M, G) \subset M$ consists of isolated points. For such actions, we can estimate the rank.

\begin{thmb} Let $M$ be a closed, orientable $4$-manifold with $\chi(M) \neq 0$. If a finite group $G$ acts locally linearly, pseudofreely and homologically trivially on $M$, then $\rk_p(G) \leq 1$
 for  $p\geq 5$, and $\rk_p (G) \leq 2$ for $p = 2, 3$.
 \end{thmb}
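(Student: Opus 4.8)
The plan is to restrict the action to an elementary abelian subgroup $G=(\cy p)^r$ with $r=\rk_p(G)$ and to bound $r$. First I would pin down the local and global structure. By local linearity and pseudofreeness the isotropy group $G_x$ of a singular point acts linearly and freely on the slice sphere $S^3$, hence has periodic cohomology; being abelian, $G_x$ is cyclic of order $1$ or $p$, so for $r\ge 2$ no point has rank-$2$ isotropy. Moreover, for nontrivial $g\in G$ the fixed set $\Fix(g)$ is finite and at each $x\in\Fix(g)$ the linearized action $g_x$ rotates $T_xM\cong\bbC^2$ with no nonzero fixed vectors, so the local index is $\mathrm{sign}\det(\id-g_x)=+1$; since the action is homologically trivial the Lefschetz number of $g$ is $\chi(M)$, whence $|\Fix(g)|=\chi(M)$ for every $g\ne 1$. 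As $\chi(M)\ne 0$, this forces every order-$p$ subgroup $H\le G$ to occur as an isotropy subgroup with $|M^H|=\chi(M)$; since $G/H\cong(\cy p)^{r-1}$ acts freely on $M^H$ we get $p^{r-1}\mid\chi(M)$ and $\chi(M)\ge p^{r-1}>0$, and the singular set $\Sigma$ is the disjoint union of the sets $M^H$ over the $\tfrac{p^r-1}{p-1}$ subgroups of order $p$.

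I would then exploit this cohomologically. Excision along the disjoint invariant linear $4$-disks at $\Sigma$ gives $H^k_G(M;\Fp)\cong\bigoplus_{Gx\subset\Sigma}H^k_{G_x}(D^4,S^3;\Fp)$ for $k\ge 5$, a vector space of constant finite dimension; by Quillen's theorem $H^*_G(M;\Fp)$ is supported on the union of the isotropy lines. On the other hand, homological triviality makes the Borel spectral sequence start from $E_2=H^*(BG;\Fp)\otimes H^*(M;\Fp)$, with $d_r=0$ for $r\ge 6$, and it must degenerate onto the module just described. Using Poincaré duality in the spectral sequence (which pairs the top row $E_*^{*,4}$ with the bottom row $E_*^{*,0}$), the bounded shape of the fibre $H^*(M;\Fp)$, and the structural difference between the polynomial ring $H^*(B(\cy 2)^r;\Ftwo)$ and the mixed ring $H^*(B(\cy p)^r;\Fp)$ for $p$ odd, I expect to read off $\rk_p(G)\le 2$ for $p=2,3$.

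For the sharper bound $\rk_p(G)\le 1$ when $p\ge 5$ I would bring in the $G$-signature theorem for locally linear actions: since each $g$ acts trivially on $H^2(M;\bbC)$ one has $\mathrm{sign}(M)=\mathrm{sign}(g^m,M)=-\sum_{x\in M^{\langle g\rangle}}\cot\frac{\pi m a_x}{p}\cot\frac{\pi m b_x}{p}$ for all $m$, where $(a_x,b_x)$ is the rotation data of $g$ at $x$. Combining these cotangent-sum identities over all $\tfrac{p^r-1}{p-1}$ cyclic subgroups — using $|M^H|=\chi(M)$ and the orbifold signature-defect relation $\sum_{\text{cone points of }M/G}\mathrm{def}=\mathrm{sign}(M)\cdot\frac{|G|-1}{|G|}$ for the $\bbQ$-homology $4$-manifold $M/G$ — should yield an arithmetic contradiction once $r\ge 2$, since for $p=2$ one gets only $\mathrm{sign}(M)=0$ while for $p\ge 5$ the Dedekind-sum values are genuinely constrained.

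The main obstacle is this last step. The constraints attached to the different fixed-point sets $M^{H_i}$ are a priori independent, so producing a genuine contradiction requires extracting a global relation (the signature-defect identity is the natural candidate) and then controlling the arithmetic of the sums $\sum_{k}\cot\frac{\pi k a}{p}\cot\frac{\pi k b}{p}$, which is precisely where $p=2,3$ behave differently from $p\ge 5$. One must also check carefully the validity of the $G$-signature theorem and the signature-defect formula in the locally linear topological category, and keep track of the orientation conventions at the singular points.
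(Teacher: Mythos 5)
Your preliminary reductions are correct and coincide with the paper's: isotropy groups are cyclic of order $p$ by the free linear action on the slice sphere $S^3$; the Lefschetz count gives $|\Fix(g)|=\chi(M)$ for every $g\neq 1$; every order-$p$ subgroup occurs as isotropy, $G/H$ permutes $M^H$ freely, and $H^q_G(M)\cong H^q_G(\Sigma)$ for $q>4$ yields the dimension bound $\dim H^q_G(M)=\frac{\chi(M)}{p}(p+1)$ for $G=(\cy p)^2$ (the paper's equation (6.2), via Shapiro's lemma). But from this point on the proposal does not contain a proof. The paper's essential idea is entirely absent: comparing $E_r^{*,0}(M_G)$ with $E_r^{*,0}(M_K)$ for the cyclic subgroups $K$ forces every differential landing in the bottom row to have image in the essential cohomology ideal $\Ess^*(G)$, and for the dimension bound to hold the differentials must kill \emph{all} of $\Ess^q(G)$ for $q>4$; since $\Ess^*(G)$ is a free $\Fp[u_1,u_2]$-module on the M\`ui generators $\gamma_1,\gamma_2,\gamma_3,\gamma_4$ of degrees $2,3,2p+1,2p+2$, and since multiplicativity confines the images of $d_2,\dots,d_5$ to the submodule generated by $\gamma_1$ and $\gamma_2$ when $2p+1\geq 11$, the generators $\gamma_3,\gamma_4$ survive and the bound is violated for $p\geq 5$. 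This is exactly the mechanism that distinguishes $p=3$ (where $\gamma_3,\gamma_4$ live in degrees $7,8$ and \emph{are} hit, cf.\ the $\CP^2$ example) from $p\geq 5$, and nothing in your sketch replaces it.

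Concretely, two steps would fail or are unsupported as written. First, for $p\geq 5$ you propose the $G$-signature theorem and cotangent/Dedekind-sum identities summed over the $p+1$ cyclic subgroups, hoping for an arithmetic contradiction when $r\geq 2$; you yourself identify this as the main obstacle, and it is not clear such a contradiction exists --- each cyclic subgroup separately carries realizable fixed-point data, and the global signature-defect relation you invoke does not obviously fail for $p\geq 5$ while holding for $p=3$ (where a pseudofree $\cy 3\times\cy 3$ action on $\CP^2$ with signature $1$ genuinely exists, so any purely arithmetic criterion must pass it). There is also the unresolved issue of the validity of the $G$-signature formula for locally linear topological actions, which you flag but do not settle. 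Second, the bound $\rk_p(G)\leq 2$ for $p=2,3$ requires ruling out rank-three actions; ``the structural difference between the polynomial and mixed rings'' is not an argument. The paper handles $p=3$ by the same M\`ui-generator degree count in rank three, and $p=2$ by a separate explicit computation showing no class $\alpha\in H^2((\cy 2)^3;\Ftwo)$ restricts nontrivially to every order-two subgroup (which the surjectivity of $d_2^{0,2}$ on each cyclic restriction, from Proposition 5.1, would require). Neither argument, nor a substitute, appears in your proposal.
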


\begin{remark} Note that the actions of $G = (\cy 2)^4$ on $M = S^4$ are not pseudofree (see Breton \cite{Breton:2011}). In addition, $M = \CP^2$ admits a pseudofree action of $G = \cy 3 \times \cy 3$, (see Example \ref{ex:cp2}), 
 and $S^2 \times S^2$ admits pseudofree actions of $\cy 2 \times \cy 2$ (see \cite{McCooey:2007a}).
\end{remark}

Here is a short outline of the paper. Throughout the paper $M$ denotes a closed, connected, oriented topological $4$-manifold. 

 For orientation-preserving actions,  the assumptions in Theorem A imply that  the fixed point set must be two dimensional whenever $H_1(M) \neq 0$. In  Section \ref{sec:seven}, we give some examples of group actions on a closed, connected oriented $4$-manifolds to illustrate various features. For example, there is an action with zero dimensional fixed point set,  where the Borel spectral sequence does not collapse, and another with a two dimensional fixed point component where the Borel spectral sequence does not collapse.  This motivates our assumption that $H_1(F)\twoheadrightarrow H_1(M)$ is  surjective. 

In Section \ref{sec:two}, we give some general facts about the main tool we use in the proof; the Leray-Serre spectral sequence for the fibration $M\rightarrow M\times_G EG\rightarrow BG$, which is also called the Borel Spectral Sequence.  The details can be found in the books \cite{borel-seminar} and \cite{tomDieck2}. 

In Section \ref{sec:three}, we prove  the first part of Theorem A, and complete the proof in Section \ref{sec:four}.  In Section \ref{sec:five} we give some applications under the extra assumptions of homological triviality  and  $H_1(M) = 0$. In Section \ref{sec:six} we prove Theorem B.

\begin{acknowledgement}
The authors would like to thank Allan Edmonds and Michael McCooey for helpful conversations and correspondence. We would also like to thank the referee for many valuable comments on the first version of this paper.
\end{acknowledgement}

\section{The Borel Spectral Sequence}
\label{sec:two}
In this section we recall some of the standard facts about $H^*_G(M)$, where $G$ is a finite group acting on a finite dimensional $G$-CW complex $M$. In particular, these results apply to $G$-manifolds and singular cohomology with coefficients in $R=\bbZ$ (or $R=\Fp$ when indicated). The details about this construction and the spectral sequence can be found in Borel \cite{borel-seminar} and tom Dieck \cite{tomDieck2}.

The Leray-Serre spectral sequence for the fibration $M\rightarrow M\times_GEG\rightarrow BG$ is known as the Borel spectral sequence. The total space of this fibration which is known as the Borel construction, and denoted by $M_G=M\times_GEG$. The $E_2$ page of this spectral sequence is $$E^{k,l}_2(M)=H^k(G;H^l(M))$$ which converges to the cohomology $H^*(M_G)$ of the total space $M_G$. These are denoted by $H^*_G(M)$ and known as the Borel equivariant cohomology groups.  This construction is natural with respect to $G$-maps of $G$-spaces. 

In the examples in Section \ref{sec:seven}, we use Proposition \ref{prop:twotwo} given below to decide whether the Borel spectral sequence collapses. Before we state this proposition we recall some basic definitions for the convenience of the reader. In this section we will  denote the fixed set by $M^G$. 

Since $EG$ is path-connected, any  fibre inclusion $j_b\colon M\rightarrow EG\times_G M$, with $j_b(m) = (b,m)$  for $b\in EG$ and $m\in M$,  induces a well-defined map $j^*\colon H^*_G(M)\rightarrow H^*(M)$.

A \emph{cohomology extension of the fibre} is an $R$-module homomorphism of degree zero $t\colon H^*(M)\rightarrow  H^*_G(M)$ such that $j^*\circ t$ is the identity. $M$ is called \emph{totally non-homologous to zero} in $M_G$ with respect to $H^*(-)$ if $j^*$ is surjective. 

 Since a surjective map onto a free $R$-module splits, if $M$ is totally non-homologous to zero and $H^*(M)$ is a free $R$-module then a cohomology extension of the fibre exists.  Also,  if $M$ is totally non-homologous to zero in $M_G$ then $G$ acts trivially on $H^*(M)$. 
 
 One can show that \cite[Ch.~III, Prop.1.17]{tomDieck2}:  $M$ is totally non-homologous to zero in $M_G$ if and only if $G$ acts trivially on $H^*(M)$ and $E_2^{0,*}$ consists of permanent cocyles (i.e $E_2^{0,p}=E_\infty^{0,p}$). Also if we have $H^*(M)$ is finitely generated free $R$-module then \cite[Ch.~III, Prop.1.18]{tomDieck2}: $M$ is totally non-homologous to zero in $M_G$ if and only if $G$ acts trivially on $H^*(M)$ and the Borel spectral sequence collapses. In this case, $H^*_G(M)$ is a free $H^*(BG)$-module.

The \cite[Ch.~III, Prop. 4.16]{tomDieck2} comes as an application of Localization Theorem, so let us recall it briefly. 
Let $S$ be a multiplicatively closed subset of homogeneous elements in $H^*(BG)$ and $\cF(S)=\{H\leqslant G\mid S\bigcap\ker(H^*_G(G/G)\rightarrow H^*_G(G/H))\neq\emptyset\}$ then \cite[Ch.~III, Theorem 3.8]{tomDieck2}:

\begin{theorem}\label{thm:twoone}
Let $(M,A)$ be a finite dimensional relative $G$-complex.  Suppose $M\setminus A$  has finite orbit types with orbits isomorphic to $G/H$ for $H\in\cF(S)$.  Then the inclusion $A\subset M$ induces the isomorphism $S^{-1}H^*_G(M)\cong S^{-1}H^*_G(A)$.
\end{theorem}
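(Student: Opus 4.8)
The plan is to reduce the statement to the vanishing of the localized relative Borel cohomology $S^{-1}H^*_G(M,A)$, and then to establish that vanishing by a cell-by-cell induction along the equivariant skeletal filtration of $(M,A)$. Recall that $H^*_G(-)$ is a graded module over $H^*_G(\mathrm{pt})=H^*(BG)$ via the classifying map $M_G\to BG$, so all the maps in the long exact sequence of the pair,
\[
\cdots \to H^n_G(M,A)\to H^n_G(M)\to H^n_G(A)\to H^{n+1}_G(M,A)\to\cdots,
\]
are $H^*(BG)$-linear. Since localization at $S$ is exact (it is tensoring with the flat module $S^{-1}H^*(BG)$), the localized sequence remains exact, and an isomorphism $S^{-1}H^*_G(M)\cong S^{-1}H^*_G(A)$ will follow once we know $S^{-1}H^*_G(M,A)=0$.

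To prove the latter I would first extract a single useful element of $S$. Let $H_1,\dots,H_k\leqslant G$ be the finitely many isotropy groups occurring on $M\setminus A$; by hypothesis each lies in $\cF(S)$, so we may pick $s_i\in S\cap\ker\bigl(H^*_G(G/G)\to H^*_G(G/H_i)\bigr)$, and set $s=s_1\cdots s_k$, which belongs to $S$ because $S$ is multiplicatively closed. The key point is that $s$ annihilates $H^*_G(G/H_i)$ for each $i$: one has $H^*_G(G/H_i)\cong H^*(BH_i)$, the map $H^*_G(G/G)\to H^*_G(G/H_i)$ induced by $G/H_i\to G/G$ is the restriction homomorphism $\Res^{G}_{H_i}\colon H^*(BG)\to H^*(BH_i)$, and this is exactly the ring map through which the $H^*(BG)$-module structure on $H^*_G(G/H_i)$ factors; since $s$ is a multiple of $s_i$ and $\Res^G_{H_i}(s_i)=0$, multiplication by $s$ is zero on $H^*_G(G/H_i)$.

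Now filter $(M,A)$ by relative equivariant skeleta $A=M_{(-1)}\subset M_{(0)}\subset\cdots\subset M_{(N)}=M$, a \emph{finite} filtration since $M$ is finite dimensional, with $M_{(n)}$ obtained from $M_{(n-1)}$ by attaching equivariant $n$-cells of the form $G/H_i\times D^n$. By excision and the suspension isomorphism, $H^*_G(M_{(n)},M_{(n-1)})$ is a product of copies of the groups $H^{*-n}_G(G/H_i)$, each annihilated by $s$; hence $s$ annihilates the whole group and $S^{-1}H^*_G(M_{(n)},M_{(n-1)})=0$ for every $n$. Feeding these vanishings into the localized long exact sequences of the triples $(M_{(n)},M_{(n-1)},A)$ and inducting on $n$ from $M_{(-1)}=A$ up to $M_{(N)}=M$ yields $S^{-1}H^*_G(M,A)=0$, as required.

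The step I expect to need the most care is the appeal to ``finitely many orbit types'': because $S^{-1}(-)$ does not commute with infinite products, one cannot localize the decomposition of $H^*_G(M_{(n)},M_{(n-1)})$ one factor at a time, and it is precisely the finiteness of the set $\{H_1,\dots,H_k\}$ that allows the multiplicatively closed set $S$ to supply a \emph{uniform} annihilator $s$. The remaining ingredients — excision and suspension for the relative skeleta, the identification $H^*_G(G/H)\cong H^*(BH)$ together with its $H^*(BG)$-module structure, and exactness of localization applied to the long exact sequences — are routine and can be carried out as in \cite{tomDieck2}.
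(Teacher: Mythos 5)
Your argument is correct. Note, however, that the paper does not prove this statement at all: it is quoted verbatim as tom Dieck's Localization Theorem \cite[Ch.~III, Theorem 3.8]{tomDieck2} and used as a black box. Your proof --- reducing to $S^{-1}H^*_G(M,A)=0$, extracting a single element $s=s_1\cdots s_k\in S$ that annihilates every $H^*_G(G/H_i)$ for the finitely many orbit types, and then running a finite induction over the relative equivariant skeleta using excision and exactness of localization --- is precisely the standard argument given in that source, and you correctly isolate the one delicate point, namely that finiteness of the set of orbit types is what supplies a uniform annihilator so that the (possibly infinite) products of cellular contributions still localize to zero.
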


\medskip
\noindent
\textbf{Assumption}: \emph{In the remainder of this section we list some results about the Borel cohomology $H^*_G(M)$ for finite $p$-group actions, with coefficients in $\bk:=\Fp$ understood}. 

\medskip
In this setting, the Localization Theorem has a stronger conclusion.

\begin{theorem}[{\cite[Ch.~III, Theorem 3.13]{tomDieck2}}] Let $G=(\cy p)^n$ be a $p$-torus and $M$ a finite dimensional $G$-CW complex. Then
$S^{-1}H^*_G(M)\cong S^{-1}H^*_G(M^G)$.
\end{theorem}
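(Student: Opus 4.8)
Here is the plan. The statement is the strong Localization Theorem for a $p$-torus, and the natural route is to deduce it from the general Localization Theorem \ref{thm:twoone} applied to the relative $G$-complex $(M,A)=(M,M^G)$, with $S\subset H^*(BG)$ the multiplicative set understood in the hypothesis. So the whole proof reduces to one thing: checking that every orbit type occurring in $M\setminus M^G$ is of the form $G/H$ with $H\in\cF(S)$. This in turn becomes a small computation in $H^*(BG;\bk)$.

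First I would dispose of the orbit-type bookkeeping. Since $M$ is a finite dimensional $G$-CW complex, it has only finitely many orbit types, each of the form $G/H$ with $H\leqslant G$; moreover $M^G$ is a $G$-CW subcomplex, so $(M,M^G)$ is a finite dimensional relative $G$-complex. Any point $x\in M\setminus M^G$ has isotropy group $G_x\neq G$, hence $G_x$ is a \emph{proper} subgroup of $G$. Therefore it suffices to show that $\cF(S)$ contains every proper subgroup of $G$.

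Next comes the cohomological input, which is where the $p$-torus hypothesis is used essentially. For $G=(\cy p)^n$ the ring $H^*(BG;\bk)$ contains a polynomial subalgebra $P=\bk[\xi_1,\dots,\xi_n]$ (all of $H^*(BG;\Ftwo)$ when $p=2$, with $\deg\xi_i=1$; the subalgebra generated by the degree-two Bockstein classes $\xi_i=\beta(\eta_i)$ when $p$ is odd, where $H^*(BG;\Fp)=P\otimes\Lambda(\eta_1,\dots,\eta_n)$). One takes $S$ to be the multiplicative set generated by the nonzero linear forms in the $\xi_i$; since $P$ is an integral domain, $0\notin S$. Now let $H<G$ be proper. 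Then $H$ lies in a subgroup of index $p$, i.e. in the kernel of some surjection $\phi\colon G\to\cy p$, and $\phi^*$ sends the polynomial generator of $H^*(B\cy p;\bk)$ to a nonzero linear form $\xi_\phi\in P$. Because $H\subseteq\ker\phi$ the composite $BH\to BG\to B\cy p$ is null, so $\Res^G_H(\xi_\phi)=0$; identifying $\Res^G_H$ with $H^*_G(G/G)\to H^*_G(G/H)$, this exhibits $\xi_\phi\in S\cap\ker\bigl(H^*_G(G/G)\to H^*_G(G/H)\bigr)$, hence $H\in\cF(S)$. (For $G$ itself the map $H^*_G(G/G)\to H^*_G(G/G)$ is the identity, whose kernel $0$ misses $S$, so $G\notin\cF(S)$ — consistent with the conclusion not being vacuous.)

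Finally I would simply invoke Theorem \ref{thm:twoone} with $(M,A)=(M,M^G)$: by the previous two paragraphs $M\setminus M^G$ has orbits $G/H$ with $H\in\cF(S)$, so the inclusion $M^G\subset M$ induces $S^{-1}H^*_G(M)\cong S^{-1}H^*_G(M^G)$. The one genuinely substantive step is the middle paragraph — that $\cF(S)$ captures \emph{all} proper subgroups, not just some — which rests on the facts that proper subgroups of $(\cy p)^n$ are detected by characters $G\to\cy p$, that $H^*(BG;\bk)$ has the explicit polynomial-times-exterior form, and that $\Res$ is natural; everything else is formal once Theorem \ref{thm:twoone} is granted.
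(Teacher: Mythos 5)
Your proof is correct and is essentially the intended one: the paper states this result as a citation of tom Dieck (Ch.~III, Thm.~3.13) immediately after Theorem \ref{thm:twoone} precisely because it follows from that general Localization Theorem by the argument you give, namely choosing $S$ to be generated by the Euler classes $\xi_\phi$ of the nontrivial characters $\phi\colon G\to\cy p$ and checking that every proper subgroup lies in $\cF(S)$. The only cosmetic point is that finiteness of orbit types comes from $G$ being finite (finitely many subgroups), not from finite-dimensionality of $M$.
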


Let $j\colon M \to \{pt\}$ denote the map of $M$ to a point. 
\begin{corollary}\label{cor:point} Let $G=(\cy p)^n$ be a $p$-torus and $M$ a finite dimensional $G$-CW complex. Then $M^G \neq \emptyset$ if and only if $j^*\colon H_G^*(pt) \to H_G^*(M)$ is injective.
\end{corollary}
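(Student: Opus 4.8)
The plan is to prove the two implications separately: the ``if'' direction by an elementary retraction argument (which needs neither the $p$-torus hypothesis nor finite dimensionality), and the ``only if'' direction by contraposition from the Localization Theorem for $p$-tori quoted just above. Throughout, coefficients are in $\bk=\Fp$ as in the standing assumption, and I write $R := H^*_G(pt) = H^*(BG;\Fp)$.

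First I would treat the implication $M^G \neq \emptyset \implies j^*$ injective. Choose a fixed point $x\in M^G$. The inclusion $\iota_x\colon \{pt\}\to M$ with $\iota_x(pt)=x$ is a $G$-map (the point carrying the trivial action), and $j\circ\iota_x = \id_{\{pt\}}$. Applying the natural contravariant functor $H^*_G(-)$ yields $\iota_x^*\circ j^* = \id_{R}$, so $j^*$ admits a left inverse and is in particular injective.

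For the converse I would argue the contrapositive: assume $M^G=\emptyset$. Then $H^*_G(M^G)=H^*_G(\emptyset)=0$, since the Borel construction on the empty space is empty, so the Localization Theorem (\cite[Ch.~III, Theorem 3.13]{tomDieck2}) gives $S^{-1}H^*_G(M)\cong S^{-1}H^*_G(M^G)=0$. Now $H^*_G(M)$ is an $R$-module via the ring map $j^*$, and $j^*$ is itself a homomorphism of $R$-modules (with $R$ acting on itself by multiplication); since $S^{-1}(-)$ is exact, injectivity of $j^*$ would force $S^{-1}R\hookrightarrow S^{-1}H^*_G(M)=0$, i.e.\ $S^{-1}R=0$. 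But $S^{-1}R=0$ if and only if $0\in S$, which fails: the multiplicative set $S$ realizing the isomorphism of Theorem~3.13 is generated by the Euler classes of the nontrivial $1$-dimensional representations of $G=(\cy p)^n$ (degree $1$ for $p=2$, degree $2$ for $p$ odd), and these are nonzero non-zero-divisors in $H^*(BG;\Fp)$, so $0\notin S$ and the localization map $R\hookrightarrow S^{-1}R$ is injective. This contradiction shows that $j^*$ is not injective.

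The only genuinely delicate point — the ``main obstacle'', such as it is — is the bookkeeping around $S$: one must pin down that the multiplicative set realizing the localization isomorphism of Theorem~3.13 satisfies $S^{-1}H^*_G(pt)\neq 0$, so that the vanishing of $S^{-1}H^*_G(M)$ genuinely obstructs injectivity of $j^*$. Everything else is formal: the retraction argument in the first direction, and exactness of localization together with the empty-space convention $H^*(\emptyset)=0$ in the second.
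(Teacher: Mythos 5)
Your proof is correct and follows the route the paper clearly intends: the paper states this corollary immediately after quoting the Localization Theorem $S^{-1}H^*_G(M)\cong S^{-1}H^*_G(M^G)$ and offers no further argument, so the derivation is exactly your combination of the retraction splitting for the easy direction and exactness of localization (plus $0\notin S$, since $S$ consists of products of Euler classes that are non-zero-divisors in $H^*(BG;\Fp)$) for the converse. Your explicit attention to why $S^{-1}H^*_G(pt)\neq 0$ is a welcome detail that the paper leaves implicit.
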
 

 Here are some useful criteria for the collapse of the Borel spectral sequence: we are combining statements from
 Borel \cite[Ch.~XII, Thm 3.4]{borel-seminar} and tom Dieck
 \cite[Ch.~III, Prop.~4.16]{tomDieck2}.
\begin{proposition}[Borel]\label{prop:twotwo}Let $G = (\cy p)^n$ be a $p$-torus and $\bk = \Fp$.
Suppose the total dimension $\sum_r \dim_{\bk} H^r(M)$ is finite and $H^q(M)=0$ for $q>\dim M=n$.  Then
$$\sum_r \dim_{\bk} H^r(M^G)\leq\sum_r \dim_{\bk} H^r(M)$$
Moreover, the following are equivalent:
\begin{enumerate}\addtolength{\itemsep}{0.3\baselineskip}
\item $\sum_r \dim_{\bk} H^r(M^G)=\sum_r \dim_{\bk} H^r(M)$.
 \item $M$ is totally non-homologous to zero in $M_G$ with respect to $H^*(-)$.
\item $\dim_{\bk} H^q_G(M)=\sum_r \dim_{\bk} H^r(M)$ for $q>n$.
\item $G$ acts trivially on $H^*(M)$ and the Borel spectral sequence collapses.
\end{enumerate}
\end{proposition}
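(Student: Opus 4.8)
The plan is to derive both parts from two facts already available in this section: the Localization Theorem for $p$-tori, $S^{-1}H^*_G(M)\cong S^{-1}H^*_G(M^G)$, and the identification of ``totally non-homologous to zero'' with ``$G$ acts trivially on $H^*(M)$ and the Borel spectral sequence collapses'' (tom Dieck, Ch.~III, Prop.~1.18), in which case $H^*_G(M)\cong H^*(M)\otimes_{\bk}R$ as a module over $R:=H^*(BG;\bk)$. Here $S\subset R$ is the multiplicative set of the Localization Theorem, generated by the Euler classes of the nontrivial characters of $G$; these all lie in the polynomial subalgebra of $R$, over which $R$ is free, so $R$ is $S$-torsion-free and $R\hookrightarrow S^{-1}R$. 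All ``ranks'' $\rk$ below refer to finitely generated $S^{-1}R$-modules, normalized so that $\rk S^{-1}R=1$; this is the standard bookkeeping of Borel and tom Dieck (for $p=2$ it is dimension over the fraction field of $S^{-1}R$, and for odd $p$ one uses length at the minimal prime, dividing out the exterior part of $H^*(BG)$).

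\emph{The inequality.} Since $G$ acts trivially on $M^G$ we have $(M^G)_G\simeq M^G\times BG$, so $H^*_G(M^G)\cong H^*(M^G)\otimes_{\bk}R$ and $\rk S^{-1}H^*_G(M^G)=\sum_r\dim_{\bk}H^r(M^G)$ (a priori possibly infinite). On the other side, the Borel spectral sequence $E_2^{k,l}=H^k(G;H^l(M))\Rightarrow H^*_G(M)$ is a first-quadrant spectral sequence of $R$-modules (with $R=E_2^{*,0}$, as $H^0(M)=\bk$ with trivial action), in which only the rows $0\le l\le n$ are nonzero and each $E_2^{*,l}=H^*(G;H^l(M))$ is finitely generated over $R$. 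Localization at $S$ is exact, producing a spectral sequence $S^{-1}E_2\Rightarrow S^{-1}H^*_G(M)$. For any finite-dimensional $\bk G$-module $V$ every composition factor is the trivial module ($G$ is a $p$-group), so a composition series together with the long exact sequences for $H^*(G;-)$ and exactness of localization gives $\rk S^{-1}H^*(G;V)\le\dim_{\bk}V$, with equality only when $V$ is a trivial module. Summing over $l$ gives $\rk S^{-1}E_2\le\sum_r\dim_{\bk}H^r(M)<\infty$; since localization is exact, ranks cannot increase along the spectral sequence differentials and are additive in extensions, so
$$\rk S^{-1}H^*_G(M)=\rk S^{-1}E_\infty\le\rk S^{-1}E_2\le\sum_r\dim_{\bk}H^r(M).$$
The Localization Theorem identifies the left-hand side with $\rk S^{-1}H^*_G(M^G)=\sum_r\dim_{\bk}H^r(M^G)$, which proves the inequality (and shows en route that this total dimension is finite).

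\emph{The equivalences.} (2)$\Leftrightarrow$(4) is the criterion recalled above. If (4) holds then $H^*_G(M)\cong H^*(M)\otimes_{\bk}R$, so $\rk S^{-1}H^*_G(M)=\sum_r\dim_{\bk}H^r(M)$, and the Localization Theorem gives (1). Conversely, if (1) holds then every inequality in the displayed chain is an equality; equality $\rk S^{-1}E_2=\sum_r\dim_{\bk}H^r(M)$ forces $\rk S^{-1}H^*(G;H^l(M))=\dim_{\bk}H^l(M)$ for each $l$, hence (by the equality case above) that $G$ acts trivially on every $H^l(M)$. With the action trivial, $E_2^{*,*}=R\otimes_{\bk}H^*(M)$, each page $E_r$ equals $E_2$ while the earlier differentials vanish, and $d_r$ is $R$-linear, hence determined by its values $d_r(1\otimes x)\in H^r(BG)\otimes H^*(M)$; equality $\rk S^{-1}E_\infty=\rk S^{-1}E_2$ makes $d_r$ vanish after inverting $S$, and since $H^r(BG)\hookrightarrow S^{-1}H^r(BG)$ this already forces $d_r(1\otimes x)=0$, so $d_r=0$ and the spectral sequence collapses, giving (4). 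Finally (1)$\Leftrightarrow$(3) is the degreewise form of the same statement: for $q>n$ only the terms $E_\infty^{q-l,l}$ with $0\le l\le n$ contribute to $H^q_G(M)$, and running the rank count degree by degree in that range (Borel, Ch.~XII, Thm.~3.4) shows the displayed identity in (3) holds precisely when the spectral sequence collapses with trivial action.

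The step I expect to require the most care is the Smith-theory input --- that $\rk S^{-1}H^*(G;V)=\dim_{\bk}V$ can only happen when $V$ is a trivial $\bk G$-module (proved by restricting to a cyclic subgroup and computing with Jordan blocks) --- together with the bookkeeping that makes $\rk$ over $S^{-1}R$ behave like a dimension, which is routine for $p=2$ but slightly more delicate for odd $p$ because of the exterior generators in $H^*(BG)$. Closely linked to this is the passage from degeneration of the localized spectral sequence back to collapse of the Borel spectral sequence itself, which is exactly the point where the $R$-linearity of the differentials and the $S$-torsion-freeness of $R$ are used, and where the hypotheses on $M$ (finite total cohomology, $H^q(M)=0$ for $q>n$) enter.
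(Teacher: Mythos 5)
The paper does not actually prove this proposition: it is quoted verbatim as a combination of Borel \cite[Ch.~XII, Thm 3.4]{borel-seminar} and tom Dieck \cite[Ch.~III, Prop.~4.16]{tomDieck2}, so there is no in-paper argument to compare against. Your localization proof is precisely the argument those sources give (Smith theory via $S^{-1}H^*_G(M)\cong S^{-1}H^*_G(M^G)$, rank bookkeeping over $S^{-1}H^*(BG)$, and the equality case of $\rk S^{-1}H^*(G;V)\le \dim_\bk V$), and your treatment of the inequality and of the equivalences (1)$\Leftrightarrow$(2)$\Leftrightarrow$(4) is sound in outline; the inductive device ``if $d_2=\dots=d_{r-1}=0$ then $E_r=E_2$ is $S$-torsion-free, so vanishing of $S^{-1}d_r$ forces $d_r=0$'' correctly handles the only place where torsion submodules could cause trouble.

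There is, however, a genuine problem with your one-line disposal of (1)$\Leftrightarrow$(3), and it points at a defect in the statement itself that you should have caught rather than asserted away. If the spectral sequence collapses with trivial action, then for $q>\dim M$ one gets $\dim_\bk H^q_G(M)=\sum_l \dim_\bk H^{q-l}(G;\Fp)\cdot\dim_\bk H^l(M)$, and this equals $\sum_r\dim_\bk H^r(M)$ only when $\dim_\bk H^k(G;\Fp)=1$ for all $k$, i.e.\ only when $G=\cy p$ is cyclic. For a $p$-torus of rank $\ge 2$ the displayed identity in (3) fails even in the most trivial situation ($G=(\cy 2)^2$ acting trivially on a point gives $\dim_\bk H^q_G(pt)=q+1$, not $1$), so (3) is \emph{not} equivalent to (1), (2), (4) as stated. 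Borel's Theorem XII.3.4 is a rank-one statement; the proposition here silently conflates the rank of $G$ with $\dim M$ (both called $n$) and imports item (3) verbatim into the $p$-torus setting, where it must be replaced by the weighted count $\dim_\bk H^q_G(M)=\sum_l\dim_\bk H^{q-l}(G;\Fp)\dim_\bk H^l(M)$. Your claim that ``running the rank count degree by degree \dots shows the displayed identity in (3) holds precisely when the spectral sequence collapses with trivial action'' is therefore false for $n\ge 2$; since the paper only ever invokes item (3) for cyclic groups, the fix is to restrict (3) to $G=\cy p$ (or restate it), at which point your degreewise count does go through.
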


With some extra assumptions, the following statement can be  proved:
\begin{corollary}[{\cite[Ch.XII, Corollary 3.5]{borel-seminar}}]\label{cor:twofour}
Let $G$ be an elementary abelian $p$-group, and $M$ be a compact $G$-space for which $\dim_\bk M$,  $\dim_\bk H^*(M)$,  and the number of orbit types are all finite.  Assume that 
\begin{enumerate}
\item $G$ acts homologically trivially, and 
\item $H^*(M)$ is generated by elements which are transgressive in  the Borel spectral sequence. 
\end{enumerate}
 Then the fixed point set $M^G$ is non-empty if and only if the Borel spectral sequence collapses.
\end{corollary}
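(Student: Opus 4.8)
The plan is to reduce the equivalence to the behaviour of the edge homomorphism along the base of the Borel spectral sequence, that is, the map $j^*\colon H^*_G(pt)=H^*(BG;\bk)\to H^*_G(M)$ of Corollary \ref{cor:point}. Recall that $j^*$ factors as $E_2^{*,0}\twoheadrightarrow E_\infty^{*,0}\hookrightarrow H^*_G(M)$, so $j^*$ is injective if and only if $E_2^{k,0}=E_\infty^{k,0}$ for every $k$, equivalently, if and only if no nonzero differential $d_r$ (with $r\ge 2$) has image in the bottom row. Since $G$ is a $p$-torus, Corollary \ref{cor:point} identifies this injectivity with the condition $M^G\ne\emptyset$; this is the bridge I would use in both directions.

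One implication is immediate: if the Borel spectral sequence collapses then certainly $E_2^{k,0}=E_\infty^{k,0}$ for all $k$, so $j^*$ is injective and therefore $M^G\ne\emptyset$ by Corollary \ref{cor:point}. (Hypothesis (1) plays no role here, and hypothesis (2) is not needed either.)

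For the converse, I would start from $M^G\ne\emptyset$, which by Corollary \ref{cor:point} gives that $j^*$ is injective, hence that no nonzero differential lands in the bottom row; in particular every transgression $d_{l+1}\colon E_{l+1}^{0,l}\to E_{l+1}^{l+1,0}$ vanishes. Next I would observe that a transgressive class $x\in H^l(M;\bk)$ is then a permanent cycle: by definition of transgressive the differentials $d_2x,\dots,d_lx$ all vanish, the transgression $d_{l+1}x$ vanishes by the previous sentence, and $d_rx=0$ for $r\ge l+2$ for dimension reasons (the target bidegree has negative fibre index). The bottom row $E_2^{*,0}=H^*(BG;\bk)$ consists of permanent cycles as well (nothing maps out of it for dimension reasons, and by the above nothing maps into it). Finally, using hypothesis (1) and that $\bk$ is a field, $E_2\cong H^*(BG;\bk)\otimes_\bk H^*(M;\bk)$ as bigraded $\bk$-algebras, so $E_2$ is generated as an algebra by $E_2^{*,0}$ together with the classes $1\otimes x_i$, where $x_1,\dots,x_m$ are the transgressive algebra generators of $H^*(M;\bk)$ supplied by hypothesis (2). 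A product of permanent cycles is a permanent cycle by the Leibniz rule, so every class of $E_2$ is a permanent cycle; hence $d_r=0$ for all $r\ge 2$ and the spectral sequence collapses.

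The crux, and the step I would take most care over, is the elementary but essential observation that a transgressive generator carries exactly one potentially nonzero differential, its transgression into the bottom row, and that this is precisely the differential killed by the presence of a global fixed point via Corollary \ref{cor:point}; the rest is bookkeeping with the multiplicative structure of the Leray--Serre spectral sequence. I would also note that this cannot be routed through Proposition \ref{prop:twotwo}, whose dimension-counting hypothesis requires $\dim M=\rk G$, whereas here $\dim_\bk M$ is an arbitrary finite number, so the argument must go directly through Corollary \ref{cor:point}.
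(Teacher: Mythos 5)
Your proof is correct. Note that the paper itself gives no argument for this statement -- it is quoted verbatim from Borel's seminar (Ch.~XII, Cor.~3.5) -- and what you have written is essentially the classical proof: the forward direction is the edge-homomorphism observation combined with Corollary \ref{cor:point}, and the converse is the standard ``transgressive generators are permanent cycles once no differential can reach the bottom row, and permanent cycles are closed under products'' argument, using hypothesis (1) and field coefficients to identify $E_2$ with $H^*(BG;\bk)\otimes_\bk H^*(M;\bk)$ multiplicatively. The one step worth stating explicitly is that $d_r$ vanishes on $E_r^{*,0}$ for all $r\geq 2$ for degree reasons, so the bottom row consists of cycles automatically and the fixed-point hypothesis is only needed to kill differentials \emph{into} that row; you have this right. (Your closing aside about Proposition \ref{prop:twotwo} takes the paper's ``$\dim M=n$'' literally, where $n$ is also the rank of the torus -- that is a typographical conflation in the paper's statement rather than a genuine hypothesis -- but this does not affect your argument.)
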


\section{Collapse of the Spectral Sequence}
\label{sec:three} 

Under some conditions, including the strong assumption that $H_1(F)\twoheadrightarrow H_1(M)$ is surjective, we prove the first  part of Theorem A, namely a ``collapse" result for the Borel spectral sequence.

\begin{theorem}\label{thm:threeone}
 Let $G=\cy p$ act locally linearly on a closed, connected, oriented $4$-manifold $M$, preserving the orientation,  with  fixed point set $F \neq \emptyset$.  If $H_1(F;\bZ)\twoheadrightarrow H_1(M;\bZ)$ is surjective, then the Borel spectral sequence  for $H^*_G(M)$ collapses with integral and $\Fp$ coefficients.
\end{theorem}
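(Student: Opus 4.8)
The plan is to analyze the Borel spectral sequence for the fibration $M \to M_G \to BG$ with $G = \cy p$, and show that the only possibly nonzero differentials must vanish. Since $G$ acts homologically trivially (the action preserves orientation and, as we will see, the hypothesis forces trivial action on $H_1$), the $E_2$-page is $E_2^{k,l} = H^k(\cy p; H^l(M))$ with ordinary coefficients, and $H^l(M)$ is concentrated in degrees $0 \le l \le 4$. The key structural observation is that $E_2^{k,0} = H^k(\cy p)$ survives (it is the image of $H^*(BG)$), and $E_2^{k,4} = H^k(\cy p; H^4(M)) = H^k(\cy p; \bZ)$ by Poincar\'e duality and orientation-preservation, so the top row behaves like the bottom row. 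The differentials to worry about are $d_2, d_3$ out of the rows $l = 1, 2, 3$, i.e. the transgression-type maps $d_{l+1}\colon E_{l+1}^{0,l} \to E_{l+1}^{l+1,0}$ and their companions, plus $d_2, d_3$ landing in or leaving the row $l=4$.

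The central idea is to use the restriction to the fixed set $F$ together with the localization theorem (Proposition~\ref{prop:twotwo} for $\Fp$-coefficients, and the $\Fp$-collapse bootstrapped to $\bZ$). First I would handle $\Fp$-coefficients: by Proposition~\ref{prop:twotwo}, collapse over $\Fp$ is equivalent to $\sum_r \dim_{\Fp} H^r(F) = \sum_r \dim_{\Fp} H^r(M)$ (noting $F \ne \emptyset$). So the task reduces to an Euler-characteristic-style count. The hypothesis that $H_1(F;\bZ) \twoheadrightarrow H_1(M;\bZ)$ is surjective is what feeds this: dually $H^1(M;\bk) \hookrightarrow H^1(F;\bk)$, which controls $b_1$, and then one leverages that $F$ is a disjoint union of isolated points and closed surfaces (local linearity in dimension $4$), with the surface components orientable when $p$ is odd. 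A Smith-theory inequality $\sum \dim_\bk H^*(F) \le \sum \dim_\bk H^*(M)$ always holds; the reverse inequality is forced by combining the injection on $H^1$, Poincar\'e duality on both $M$ and the surface components of $F$, and the constraint that the Euler characteristics match up via the Lefschetz fixed point formula (each isolated fixed point and each fixed surface contributes, and $\chi(F) = \chi(M)$ when the action is homologically trivial — indeed for $\cy p$ this is automatic for the relevant pieces). Matching total Betti numbers then follows from matching $b_0, b_1$ (and their duals $b_4, b_3$) plus the Euler characteristic constraint pinning down $b_2$.

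For integral coefficients, I would bootstrap: once the $\Fp$-spectral sequence collapses for the relevant prime $p$, the integral differentials $d_r$ are torsion-valued (since rationally the sequence collapses — $H^*(\cy p; \bbQ \otimes H^l(M))$ is concentrated in $k=0$), so only $p$-torsion can appear. A differential in the integral spectral sequence that is nonzero would produce, via the universal coefficient / Bockstein comparison with $\Fp$-coefficients, a nonzero differential in the $\Fp$-spectral sequence, contradicting $\Fp$-collapse. More carefully: the natural map from the integral to the $\Fp$ spectral sequence, together with the long exact Bockstein sequence, shows that if $E_\infty = E_2$ over $\Fp$ then the integral spectral sequence has no room for nonzero $d_r$; equivalently, $H^*_G(M;\bZ)$ has the right size in each total degree. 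I would phrase this as: $\dim_{\Fp} H^q_G(M;\Fp)$ is already $\sum_r \dim_{\Fp}H^r(M)$ for large $q$ by item (3) of Proposition~\ref{prop:twotwo}, which by a counting argument with $H^*(BG;\bZ)$-module structure forces the integral sequence to degenerate as well.

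The main obstacle I anticipate is the Euler-characteristic/Betti-number bookkeeping establishing the reverse Smith inequality over $\Fp$ — specifically, showing that surjectivity of $H_1(F) \to H_1(M)$ is exactly enough to force $\sum \dim_\bk H^*(F) = \sum \dim_\bk H^*(M)$ rather than merely $\le$. One must carefully use that the fixed surfaces are orientable (for $p$ odd) or handle the $p=2$ case where a fixed $\RP^2$-type component could a priori occur, and check that the hypothesis rules out the "gap" illustrated by the counterexamples in Section~\ref{sec:seven}. The subtlety is that injectivity of $H^1(M) \to H^1(F)$ alone is \emph{not} enough (as the remark following Theorem~A emphasizes, when $H_1(M)$ has torsion), so the proof must genuinely use the stronger surjectivity statement on $H_1$, presumably through an integral/Bockstein argument comparing the torsion in $H_1(M)$ with the structure of $H^*(F)$ — this is where the real content lies.
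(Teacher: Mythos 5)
Your strategy diverges from the paper's proof, which kills each differential $d_2,\dots,d_5$ directly by naturality arguments using the inclusions $M\mbox{-}\{x\}\hookrightarrow M$, $(M,\emptyset)\to(M,\{x\})$, and $M\mbox{-}A\hookrightarrow M$ for a $1$-skeleton $A\subset F$ --- and it has a genuine gap at its foundation. You assert that the hypotheses force the action to be homologically trivial, but they only force trivial action on $H_0,H_1,H_3,H_4$; the action on $H_2(M)$ may well be non-trivial (the paper's proof explicitly allows $H^2(M)/\Tors\cong\bZ^{r_0}\oplus\bZ[\zeta_p]^{r_1}\oplus\Lambda^{r_2}$ as a $G$-module, e.g.\ permutation actions on connected-sum factors). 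Proposition \ref{prop:twotwo} equates the dimension count $\sum\dim H^*(F)=\sum\dim H^*(M)$ with ``$G$ acts trivially on $H^*(M)$ \emph{and} the spectral sequence collapses,'' so when the action on $H^2$ is non-trivial that equality simply fails while the theorem still asserts collapse: your proposed criterion cannot prove the statement in its stated generality. Even in the homologically trivial case the equality you need is not established: Smith theory plus the Lefschetz constraint give only $b_1(F)\le 2b_1(M)$, and the injectivity $H^1(M;\Fp)\rightarrowtail H^1(F;\Fp)$ gives only $b_1(F)\ge b_1(M)$; the missing lower bound $b_1(F)\ge 2b_1(M)$ (which in the paper is Corollary \ref{cor:threefour}, a \emph{consequence} of the theorem, not an input) would require an extra argument --- for instance isotropy of the image of $H^1(M)$ in $H^1(F)$ under the intersection pairing on the fixed surfaces --- that you do not supply.

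The integral step is also asserted rather than proved. ``$\Fp$-collapse leaves no room for integral differentials'' is exactly the delicate point: the whole difficulty with integral coefficients is the family $d_2^{2i+1,3}$ when $H_1(M)$ has $p$-torsion, and closing it in the paper requires the split-type Lemma \ref{lem:splittype} on the $G$-module structure of $H^2(M;\Fp)$ together with explicit dimension counts of $H^5_G(M)$, $H^6_G(M)$ and $H^5_G(M;\Fp)$ (Lemma \ref{lem:dimc}) using $H^q_G(M)\cong H^q_G(F)$ for $q>4$. A bare universal-coefficients or Bockstein comparison does not deliver this, because the $k=0$ column carries free summands and the torsion of $H^*(M)$ enters $E_2(\bZ)$ and $E_2(\Fp)$ asymmetrically. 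So the proposal would need substantial additional work at both the $\Fp$ stage and the integral stage before it constitutes a proof.
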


\begin{remark} Since all the arguments in the proof of this result are cohomological,  the conclusion should hold (at least for integral coefficients) if $\coker\{H_1(F) \to H_1(M)\} \neq 0$ is a cohomologically trivial $\ZG$-module and $H_1(M)$ is torsion free. We have not checked the details. If $H_1(M)$ has $p$-primary torsion, then the situation in this extra generality appears much more complicated.
\end{remark} 

At various points, we will need to use some properties of the group cohomology of  $G = \cy p$, Recall that the integral cohomology is a polynomial algebra $H^*(G;\bZ) = \bZ[\theta]$, where $|\theta| = 2$ is a class of degree 2. For $p$ odd, we have
$$H^*(G;\Fp) = \Fp[u]\otimes \Lambda(x)$$
where $|u| = 2$ and $|x| = 1$, with $x^2 = 0$. For $p=2$, $H^*(G;\bF_2) = \bF_2[x]$, where $|x|=1$. The cup products are natural with respect to the change of coefficients
$\bZ \to \Fp$, and the induced maps $H^{2k}(G;\bZ) \to H^{2k}(G; \Fp)$ are isomorphisms for $k >0$ and surjective for $k=0$.  The differentials in the  $E_r$ terms of the Borel spectral sequence for $H^*_G(M)$ are multiplicative with respect to cup products in the cohomology of $M$ and $G$.

\medskip
Before starting the proof of Theorem \ref{thm:threeone} we will collect some useful remarks:
 \begin{enumerate}
\item Since  $G$ preserves the orientation on $M$ (automatic for $p$ odd), and $H_1(F) \to H_1(M)$ is surjective, $G$ acts trivially on the homology and cohomology of $M$, except possibly for $H_2(M) \cong H^2(M)$. 
\item Let $A \subset F$ denote  a non-empty $1$-dimensional subset of the fixed point set,  such that the map 
$H_1(A)\twoheadrightarrow H_1(M)$ is  surjective. For example, take $A$ to be a $1$-skeleton of $F$.
\item The induced map $H^1(M) \to H^1(A)$ is injective.
\item By applying duality to a neighhourhood of $A$ in $M$,  we have $H^*(M\mbox{-}A) \cong H_{4-*}(M,A)$, and similarly $H^*(M,A) \cong H_{4-*}(M\mbox{-}A)$.
\item The statements  so far also hold for homology and cohomology with $\Fp$-coefficients.
\item  $H_1(M,A) = \ker\{H_0(A) \to H_0(M)\}$ is $\bZ$-torsion free, with trivial $G$-action.
\item  $H^2(M,A)$ is $\bZ$-torsion free: its torsion subgroup is $\Ext(H_1(M,A), \bZ)=0$.
\end{enumerate}
\begin{proof}
We first consider the $E_2$-page of the Borel spectral sequence $E_2^{k,l}(M)=H^k(G;H^l(M))$ and show that $d_2$ differentials are zero. Integral coefficients are understood unless $\Fp$ coefficients are stated explicitly.

\subsection{The maps \boldmath{$d_2^{k,4}\colon E^{k,4}_2(M)\rightarrow E^{k+2,3}_2(M)$}:} For any fixed point  $x\in F$, the inclusion map $i\colon M\mbox{-}\{x\}\hookrightarrow M$ induces a homomorphism $i^*\colon H^n(M)\rightarrow H^n(M\mbox{-}\{x\})$ which is zero for $n\geq 4$ and isomorphism for other dimensions.  The corresponding map of spectral sequences  $E^{k,l}_r(M)\rightarrow E^{k,l}_r(M\mbox{-}\{x\})$  is trivial when $l=4$ and an isomorphism otherwise.  By naturality we have the commutative diagrams of differentials;
$$\xymatrix{& H^k(G;H^4(M))\ar[d]_{i^*}\ar[r]^{d_2^{k,4}}& H^{k+2}(G;H^3(M))\ar[d]^{\cong}\\
& H^k(G;H^4(M\mbox{-}\{x\}))\ar[r]^{d_2^{k,4}}& H^{k+2}(G;H^3(M\mbox{-}\{x\})) }$$
Since $H^4(M\mbox{-}\{x\})=0$, we have $i^*=0$ and $H^3(M\mbox{-}\{x\})\cong H^3(M)$,  so $d_2^{k,4}=0$. The same argument works for $\Fp$ coefficients.

\subsection{The maps \boldmath{$d_2^{k,1}\colon E^{k,1}_2(M)\rightarrow E^{k+2,0}_2(M)$}:} Similarly, for any $x\in F$ consider the map $j^*\colon H^n(M, \{x\})\rightarrow H^n(M)$ induced by $j\colon (M,\emptyset)\rightarrow (M,\{x\})$. From the long exact sequence in relative cohomology, $j^*$ is isomorphism for all $n\geq 1$.   The corresponding map of spectral sequences  $E^{k,l}_r(M,\{x\})\rightarrow E^{k,l}_r(M)$ is also isomorphism for $l\geq 1$.  By naturality, we again have the commutative diagrams of differentials; 
$$\xymatrix{& H^k(G;H^1(M,\{x\}))\ar[d]_{j^*}\ar[r]^{d_2^{k,1}}& H^{k+2}(G;H^0(M,\{x\}))\ar[d]^{}\\
& H^k(G;H^1(M))\ar[r]^{d_2^{k,1}}& H^{k+2}(G;H^0(M)) }$$
Since $H^0(M,\{x\})=0$ and $j^*$ is isomorphism then $d_2^{k,1}=0$. 
The same argument works for $\Fp$ coefficients.

\subsection{The maps \boldmath{$d_2^{k,3}\colon E^{k,3}_2(M)\rightarrow E^{k+2,2}_2(M)$}:} 
 From the long exact sequence of relative homology, and $H_2(A)=0$, we get injectivitiy of $H_2(M)\rightarrowtail H_2(M,A)$. 
 Since  $H_1(A)\twoheadrightarrow H_1(M)$ is surjective, we conclude that the map $H_1(M)\rightarrow H_1(M,A)$ is zero. By duality, the map $H^3(M)\rightarrow H^3(M\mbox{-}A)$ is zero.  
 This also  holds for $\Fp$ coefficients. We obtain the commutative diagram
\eqncount
\begin{equation}\label{eq:diag1}
\vcenter{\xymatrix{&0\ar[r]& H^2(M)\ar[r]\ar[d]_{\cong}& H^2(M\mbox{-}A)\ar[d]_{\cong}\ar[r]& K\ar[r]\ar@{ >->}[d]&0\\
&0\ar[r]& H_2(M)\ar@{ >->}[r]& H_2(M,A)\ar[r]& H_1(A)\ar@{->>}[r]& H_1(M)\ar[r]& 0}}
\end{equation}
where $K := \ker\{H_1(A) \to H_1(M)\}$. 
For  $k\geq 0$ even,  $H^{k+1}(G;K)=0$  since $K$ is  $\bZ$-torsion free with trivial $G$-action.  
When we apply group cohomology to the upper short exact sequence in \eqref{eq:diag1} we get the long exact sequence 
$$\xymatrix{\cdots \ar[r]&H^{k+1}(G;K)\ar[r]& H^{k+2}(G;H^2(M))\ar[r]& H^{k+2}(G;H^2(M\mbox{-}A))\ar[r]&\cdots}$$
 It follows that the map $H^{k+2}(G;H^2(M))\rightarrowtail H^{k+2}(G;H^2(M\mbox{-}A))$ is injective for $k$ even.

Since the map $H^3(M)\rightarrow H^3(M\mbox{-}A)$ is zero, the induced map  in group cohomology $H^{k}(G;H^3(M))\rightarrow H^{k}(G;H^3(M\mbox{-}A))$ is also zero. By naturality of spectral sequences with respect to the inclusion $M\mbox{-}A \hookrightarrow M$ we have the following commutative diagram
$$\xymatrix{& H^k(G;H^3(M))\ar[d]_{0}\ar[r]^{d_2^{k,3}}& H^{k+2}(G;H^2(M))\ar@{ >->}[d]\\
& H^k(G;H^3(M\mbox{-}A))\ar[r]^{d_2^{k,3}}& H^{k+2}(G;H^2(M\mbox{-}A)) }$$
implying $d_2^{k,3}=0$ for $k$ even. For $\Fp$ we are missing the injectivity of the right-hand vertical map. However, the isomorphism $H^3(M) \otimes \Fp \cong H^3(M;\Fp)$ implies that $H^0(G; H^3(M)) \to H^0(G; H^3(M;\Fp))$ is surjective, since both coefficients have trivial $G$-action, so reduces to the surjection $H^3(M) \to H^3(M;\Fp)$. Then naturality gives $d_2^{2i, 3} = 0$ for integral or $\Fp$ coefficients.

\medskip
\noindent
\textbf{The differentials $d_2^{k, 3}$  for $k$ odd ($\Fp$-coefficients)}:
If $H_1(M)$ has no $p$-torsion, then $ H^3(M;\Fp)=0$ and $d_2^{k, 3}=0$ with $\Fp$-coefficients  for $k$ odd.
To handle the case where $H^3(M)$ has $p$-primary torsion and $k$ is odd, we compare with the  $\Fp$-coefficient spectral sequence via the change of coefficients $\bZ \to \Fp$.
 Note that since $H^0(G; H^3(M)) \to H^0(G; H^3(M;\Fp))$ is surjective, and $d_2^{0,3} = 0$, we see that 
$$d_2^{0,3} \colon H^0(G; H^3(M;\Fp)) \to H^2(G; H^2(M;\Fp))$$
is also zero. Now we use the multiplicativity of the $\Fp$-coefficients spectral sequence, and the fact that
$$ \cup\, x \colon H^0(G; H^3(M;\Fp)) \to H^1(G; H^3(M;\Fp))$$
is surjective (since the coefficients have trivial $G$-action), where $0 \neq x \in H^1(G; \Fp)$,  to conclude that 
$$d_2^{1,3} \colon H^1(G; H^3(M;\Fp)) \to H^3(G; H^2(M;\Fp))$$
is zero for $\Fp$ coefficients,  and hence for all odd $k$ by naturality and periodicity. 
We have now shown that $d_2^{k,3}=0$ for all $k$ in the spectral sequence with $\Fp$ coefficients . 

\begin{remark} If $H_1(M)\cong H^3(M)$ is $p$-primary torsion free, then  $H^k(G,H^3(M))=0$ for $k$ odd,  since $H_1(M)\cong H^3(M)$ is a trivial $G$-module, due to the  assumption that $H_1(F)\twoheadrightarrow H_1(M)$ is surjective. Hence the differentials $d_2^{k,3}=0$ with integral coefficients for all odd $k$,  if the order of $H_1(M)$ is not divisible by $p$.
\end{remark}

We will return to   the remaining differentials \textbf{$d_2^{k,3}$,  for  $k$ odd and integral coefficients},  after showing that the spectral sequence collapses for $\Fp$ coefficients.

\subsection{The maps \boldmath{$d_2^{k,2}\colon E^{k,2}_2(M)\rightarrow E^{k+2,1}_2(M)$}:} 
Let $T = \ker\{H^2(M,A) \to H^2(M)\}=\coker\{H^1(M) \to H^1(A)\}$. Since $T$ is a quotient of $H^1(A)$, 
it has trivial $G$-action. Since $ T\subseteq H^2(M,A)$, it  is $\bZ$-torsion free.
Since $H^2(A)=0$, we have 
a short exact sequence:
$$\xymatrix{0\ar[r]& T\ar[r]& H^2(M,A)\ar@{->>}[r]^{\alpha}& H^2(M)\ar[r]&0}$$
which induces a long exact sequence in group cohomology:  
$$\xymatrix{\cdots \ar[r]&H^{k}(G;T)\ar[r]& H^{k}(G;H^2(M,A))\ar[r]& H^{k}(G;H^2(M))\ar[r]&H^{k+1}(G;T)\ar[r]&\cdots}$$ 
Therefore  the map $H^{k}(G;H^2(M,A))\rightarrow H^{k}(G;H^2(M))$ is surjective for $k$ even, since $H^{k+1}(G;T)=0$ in this case.

By naturality with respect to the map of pairs $(M,\emptyset)  \to (M,A)$  we have the commutative diagram: 
$$\xymatrix{& H^k(G;H^2(M,A))\ar@{->>}[d]\ar[r]^{d_2^{k,2}}& H^{k+2}(G;H^1(M,A))\ar[d]_{0}\\
& H^k(G;H^2(M))\ar[r]^{d_2^{k,2}}& H^{k+2}(G;H^1(M)) }$$
We note that the map $H^1(M,A)\rightarrow H^1(M)$ is zero, since $H^1(M) \to H^1(A)$ is injective. 
Hence the map $H^{k+2}(G;H^1(M,A))\rightarrow H^{k+2}(G;H^1(M))$ is zero, for $k$ even.
and $d_2^{k,2}=0$, for $k$ even.  For odd $k$, we  have $H^{k+2}(G;H^1(M))=0$ since $H^1(M)$ is torsion free with trivial $G$-action, and $d_2^{k,2}=0$ also for $k$ odd (with integral coefficients).

\medskip
To understand the $d_2^{k,2}$ differentials with $\Fp$ coefficients we use the multiplicative structure in the spectral sequence. Suppose that $0 \neq  d_2^{0,2}(z) \in E_2^{2,1} = H^2(G; H^1(M;\Fp))$, for some $z \in E_2^{0,2}$. Since the cup product pairing 
$$H^2(G; H^1(M;\Fp)) \times H^2(G; H^3(M;\Fp)) \to H^4(G; H^4(M;\Fp)) = \Fp$$
is non-singular, there exists $w \in H^2(G; H^3(M;\Fp)) = E_2^{2,3}$ such that $d_2^{0,2}(z) \cdot w \neq 0$. But 
$$0 = d_2^{2,5}(z\cdot w)  = z \cdot d_2^{2,3}(w) -  d_2^{0,2}(z) \cdot w$$
 since $z\cdot w \in E_2^{2,5} = 0$ and $ d_2^{2,3}(w) = 0$, as shown above. This is a contradiction, and hence $d_2^{0,2} = 0$.
 Since $\cup\, x\colon H^2(G; H^1(M;\Fp)) \cong H^3(G; H^1(M;\Fp))$, we have
$ d_2^{1,2} = 0$. This completes the proof that all the $d_2$ differentials are zero for $\Fp$ coefficients.

\subsection{Vanishing of differentials in the $E_3$-page:}   Obviously $d_3^{k,1}=0$, and we again use the  maps induced from  $i\colon M\mbox{-}\{x\}\hookrightarrow M$  to show $d_3^{k,4}=0$, and $j\colon (M,\emptyset) \to (M, \{x\})$ to show $d_3^{k,2}=0$. 
$$\xymatrix@C-4pt{&E^{k,4}_3(M) =  H^k(G;H^4(M))\ar[d]_{i^*}\ar[r]^{d_3^{k,4}}& H^{k+3}(G;H^2(M))/\image d_2^{k+1,3}(M)\ar[d]^{\cong}\\
&E^{k,4}_3(M\mbox{-}\{x\}) = H^k(G;H^4(M\mbox{-}\{x\}))\ar[r]^{d_3^{k,4}}&
 H^{k+3}(G;H^2(M\mbox{-}\{x\}))/\image d_2^{k+1,3}(M\mbox{-}\{x\}) }$$
We have  $H^2(M\mbox{-}\{x\})\cong H^2(M)$ and  $H^3(M\mbox{-}\{x\})\cong H^3(M)$, which gives the righthand vertical isomorphism.
The map  $i^*=0$  since $H^4(M\mbox{-}\{x\})=0$,  so $d_3^{k,4}=0$. 

\medskip
Next consider the diagram:
$$\xymatrix@C+8pt{& H^k(G;H^2(M,\{x\}))/\image d^{k-2,3}_2(M,\{x\}) \ar[d]_{j^*}\ar[r]^(0.6){d_3^{k,2}}& H^{k+3}(G;H^0(M,\{x\}))\ar[d]^{}\\
& H^k(G;H^2(M))/\image d^{k-2,3}_2(M)\ar[r]^(0.6){d_3^{k,2}}& H^{k+3}(G;H^0(M)) }$$
Since $H^0(M,\{x\})=0$ and $j^*$ is  an isomorphism,  we have $d_3^{k,2}=0$.
The same arguments work for $\Fp$ coefficients.

For  $d_3^{k,3}$ we again use naturality and the following commutative diagram;
$$\xymatrix{& H^k(G;H^3(M))\supseteq \ker d_2^{k,3}(M)\ar[d]_{i^*}\ar[r]^(0.6){d_3^{k,3}}& H^{k+3}(G;H^1(M))\ar[d]_{i^*}^{\cong}\\
& H^k(G;H^3(M\mbox{-}A))\supseteq \ker d_2^{k,3}(M\mbox{-}A)\ar[r]^(0.6){d_3^{k,3}}& H^{k+3}(G;H^1(M\mbox{-}A)) }$$
By duality,  $H^1(M) \to H^1(M\mbox{-}A)$ is an isomorphism, and  
so is the map 
$$i^*\colon H^{k+3}(G;H^1(M))\rightarrow H^{k+3}(G;H^1(M\mbox{-}A)).$$
 Since  the map $H^3(M)\to H^3(M\mbox{-}A)$ is zero (as noted above),  we have $d_3^{k,3}=0$.  
The same arguments work for $\Fp$ coefficients. For integral coefficients (where we have not yet shown $\ker d_2^{k,3}=0$ if $k$ is odd),  we are using the vanishing of $d_2^{k-2,4}$ to see that the domain 
of $d_3^{k,3}$ is $\ker d_2^{k,3} \subseteq H^k(G;H^3(M))$.

\subsection{Vanishing of differentials in the $E_4$-page:}   Obviously $d_4^{k,1}=0$ and $d_4^{k,2}=0$, and  again we use the induced maps $ i^* $ to show $d_4^{k,4}=0$ and $j^*$ to show $d_4^{k,3}=0$. 
$$\xymatrix{& H^k(G;H^4(M))\ar[d]_{i^*}\ar[r]^{d_4^{k,4}}& H^{k+4}(G;H^1(M))\ar[d]^{\cong}\\
& H^k(G;H^4(M\mbox{-}\{x\}))\ar[r]^{d_4^{k,4}}& H^{k+4}(G;H^1(M\mbox{-}\{x\})) }$$
Since $H^4(M\mbox{-}\{x\})=0$, we have $i^*=0$ and $H^1(M\mbox{-}\{x\})\cong H^1(M)$,  so $d_4^{k,4}=0$.  
$$\xymatrix{& H^k(G;H^3(M,\{x\}))\supseteq \ker d_2^{k,3}(M,\{x\})\ar[d]_{j^*}\ar[r]^(0.6){d_4^{k,3}}& H^{k+4}(G;H^0(M,\{x\}))\ar[d]^{}\\
& H^k(G;H^3(M))\supseteq \ker d_2^{k,3}(M)\ar[r]^(0.6){d_4^{k,3}}& H^{k+4}(G;H^0(M)) }$$
Since $H^0(M,\{x\})=0$ and $j^*$ is an isomorphism, it follows that $d_4^{k,3}=0$. 
The same arguments work for $\Fp$ coefficients. 

\subsection{Vanishing of differentials in the $E_5$-page} There is only one differential to consider 
$d_5^{k,4}\colon E^{k,4}_5(M)\rightarrow E^{k+5,0}_5(M)$ which can easily 
shown to be zero by again using $j^*$:
$$\xymatrix{& H^k(G;H^4(M,\{x\}))\ar[d]_{j^*}\ar[r]^{d_5^{k,4}}& H^{k+5}(G;H^0(M,\{x\}))\ar[d]^{}\\
& H^k(G;H^4(M))\ar[r]^{d_5^{k,4}}& H^{k+5}(G;H^0(M)) }$$
Since $H^0(M,\{x\})=0$ and $j^*$ is isomorphism then $d_5^{k,4}=0$. 
The same arguments work for $\Fp$ coefficients.  

\begin{remark} For the vanishing of the differentials $d_r^{k,r-1}$  hitting the $(*,0)$ line, we could just have cited 
 Corollary \ref{cor:point},  since $F \neq \emptyset$.
\end{remark}

\medskip
We have now shown that the Borel spectral sequence with $\Fp$ coefficients collapses, and that $E_3 = E_\infty$ with integral coefficients (independently of the vanishing of  $d_2^{2i+1,3}$). 

\subsection{The maps \boldmath{$d_2^{2i+1,3}\colon E^{2i+1,3}_2(M;\bZ)\rightarrow E^{2i+3,2}_2(M;\bZ)$}:} 
We will show that the differentials
$d_2^{2i+1,3} =0$ by comparing the integral calculations with the mod $p$ calculations. 

Note that the groups $H_G^q(M)$ with integral coefficients are all $\bk$-vector spaces for $q >4$ (with notation $\bk := \Fp$ as before). This follows from the isomorphism $H_G^q(M) \cong H_G^q(F) \cong H^q(F \times BG)$ (see \cite[Proposition 2.1]{Edmonds:1998}).

There is a  short exact sequence of $\bk$-vector spaces
\eqncount
\begin{equation}\label{eq:dima}
0 \to H^5_G(M) \to H^5_G(M; \Fp) \to H^6_G(M) \to 0.
\end{equation}
and we will compute both sides of the  resulting equality 
\eqncount
\begin{equation}\label{eq:dimb}
 \dim_\bk H^5_G(M)  + \dim_\bk H^6_G(M)=  \dim_\bk H^5_G(M; \Fp) 
 \end{equation}
via separate calculations. 

Suppose that  $d_2^{1,3} \neq 0$,  and  we let 
$b = \dim_\bk (\image d_2^{1,3}) =  \dim_\bk (\image d_2^{3,3}) $ (by periodicity). 
Let $t =  \dim_\bk H^{odd}(G;H^3(M))$ and note that $H^{odd}(G; H^1(M)) = 0$.
Since $H^3(M) \cong H_1(M)$ has trivial $G$-action, 
we have $\dim_\bk H^{2}(G; H^3(M)) = b_1(M) + t$.

\begin{lemma}\label{lem:dimc}
\mbox{}
\begin{enumerate}
\item  $\dim_\bk H^5_G(M) = 
2b_1(M) +  (t-b)  +  \dim_\bk H^3(G; H^2(M))$;
\item $\dim_\bk H^6_G(M) = 
2 +  (t-b)  +  \dim_\bk H^4(G; H^2(M)) $;
\item  $\dim_\bk H^5_G(M;\Fp) = 
2 + 2b_1(M) +  2t  +  \dim_\bk H^3(G; H^2(M;\Fp)) $.
\end{enumerate}
\end{lemma}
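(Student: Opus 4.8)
The plan is to compute each of the three dimensions by reading off the appropriate total-degree line of the $E_\infty$-page of the relevant Borel spectral sequence. We already know that the spectral sequence collapses with $\Fp$-coefficients, so there $E_\infty = E_2$, and that $E_3 = E_\infty$ with integral coefficients, independently of whether the odd differentials $d_2^{2i+1,3}$ vanish. Since the integral Borel cohomology groups $H^q_G(M)$ are $\bk$-vector spaces for $q>4$, in every case the dimension in question is simply the sum of the $\bk$-dimensions of the surviving terms $E_\infty^{k,l}$ with $k+l \in \{5,6\}$, with no extension problems to resolve. I would use throughout the structure of $H^*(\cy p;-)$: for a finitely generated trivial module $N$ one has $H^{2i}(G;N) \cong N/pN$ and $H^{2i-1}(G;N)\cong N[p]$ for $i\geq 1$, while $H^n(G;\Fp)$ has $\bk$-dimension $1$ for every $n$; in particular $H^{\mathrm{odd}}(G;H^1(M))=0$ because $H^1(M)$ is free, and $\dim_\bk H^2(G;H^3(M)) = b_1(M)+t$ as recorded above.

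For parts (1) and (2) I would work with the integral spectral sequence. Along the total degree $5$ line the only terms that can be nonzero are at $(k,l) = (4,1),(3,2),(2,3)$, since $H^5(G;\bZ)=0$, $H^1(G;\bZ)=0$ and $H^5(M)=0$ kill $(5,0),(1,4),(0,5)$. Using the vanishing of the relevant $d_2$ and $d_3$ differentials already established, $E_\infty^{4,1} = H^4(G;H^1(M))$ has dimension $b_1(M)$, $E_\infty^{3,2} = H^3(G;H^2(M))/\image d_2^{1,3}$ has dimension $\dim_\bk H^3(G;H^2(M)) - b$, and $E_\infty^{2,3} = H^2(G;H^3(M))$ has dimension $b_1(M)+t$; adding these gives (1). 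Along the total degree $6$ line the surviving positions are $(6,0),(4,2),(3,3),(2,4)$, with contributions $\dim_\bk H^6(G;\bZ)=1$, $\dim_\bk H^4(G;H^2(M))$, $\dim_\bk\ker d_2^{3,3} = t-b$, and $\dim_\bk H^2(G;H^4(M))=1$ respectively; here I would invoke the periodicity coming from cup product with $\theta\in H^2(G;\bZ)$ to identify $\dim_\bk\image d_2^{3,3} = \dim_\bk\image d_2^{1,3} = b$, so that $\ker d_2^{3,3}\subseteq H^3(G;H^3(M))$ has dimension $t-b$. Adding these gives (2).

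For part (3) I would use the collapsed $\Fp$-coefficient spectral sequence, so $E_\infty=E_2$. Universal coefficients and Poincar\'e duality give $\dim_\bk H^1(M;\Fp) = \dim_\bk H^3(M;\Fp) = b_1(M)+t$ (using $\dim_\bk H_1(M)[p]=t$), while $H^0(M;\Fp)=H^4(M;\Fp)=\Fp$ and $H^5(M;\Fp)=0$. The total degree $5$ line then contributes $\dim_\bk H^5(G;\Fp)=1$ from $(5,0)$, $\dim_\bk H^4(G;H^1(M;\Fp)) = b_1(M)+t$ from $(4,1)$, $\dim_\bk H^3(G;H^2(M;\Fp))$ from $(3,2)$, $\dim_\bk H^2(G;H^3(M;\Fp)) = b_1(M)+t$ from $(2,3)$, and $\dim_\bk H^1(G;\Fp)=1$ from $(1,4)$, while $(0,5)$ vanishes; adding these gives (3).

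The one point that needs care is the bookkeeping in the row $l=3$ of the integral spectral sequence, where $d_2^{2i+1,3}$ has not yet been shown to vanish: one must check that precisely the terms $E_\infty^{3,2}$ and $E_\infty^{3,3}$ are affected, each by the common image dimension $b$, so that the same correction $t-b$ appears in both (1) and (2). Everything else is a direct computation with $H^*(\cy p;-)$; note in particular that $H^3(G;H^2(M))$, $H^4(G;H^2(M))$ and $H^3(G;H^2(M;\Fp))$ cannot be simplified further because the $G$-action on $H^2(M)$ need not be trivial, which is why they are carried along verbatim.
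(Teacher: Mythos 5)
Your computation is correct and follows essentially the same route as the paper: sum the $E_2$-dimensions along the total-degree $5$ and $6$ lines (respectively the collapsed $\Fp$-page for (3)), subtract the single correction $b$ coming from $\image d_2^{1,3}$ at $E_\infty^{3,2}$ and from $\ker d_2^{3,3}$ at $E_\infty^{3,3}$, and use that all $H^q_G(M)$ for $q>4$ are $\bk$-vector spaces so no extension problems arise. The individual term evaluations ($H^{odd}(G;H^1(M))=0$, $\dim H^2(G;H^3(M))=b_1(M)+t$, $\dim H^r(G;H^s(M;\Fp))$ for $s=1,3$ equal to $b_1(M)+t$, etc.) all agree with the paper's.
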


\begin{proof}
We compute:
$$ \dim_\bk H^5_G(M) = \sum_{i = 0}^4  \dim_\bk H^{5-i}(G; H^i(M)) -b$$
 and note that 
$$\dim_\bk E_\infty^{3.2} = \dim_\bk  H^3(G; H^2(M)) -b.$$
We then obtain the first formula after taking into account the vanishing of all the other differentials.
Similarly, 
$$ \dim_\bk H^6_G(M) = \sum_{i = 0}^4  \dim_\bk H^{6-i}(G; H^i(M)) -b$$
after substituting the value
$$\dim_\bk E_\infty^{3.3} = \dim_\bk  H^3(G; H^3(M)) -b = t-b,$$
and we obtain the second formula.
To compute the third formula, we note that $$\dim_\bk H^1(G; H^4(M;\Fp))= \dim_\bk H^5(G; \Fp) =1,$$
 and 
$$\dim_\bk H^2(G; H^3(M;\Fp)) = \dim_\bk H^4(G; H^1(M;\Fp)) = t + b_1(M).$$
\end{proof}

In order to compare the integral and mod $p$ formulas, we need some information 
about the structure of $H^2(M;\Fp)$ as a $G$-module. We can decompose 
$$H^2(M)/\Tors \cong \bZ^{r_0(M)} \oplus \bZ[\zeta_p]^{r_1(M)} \oplus \Lambda^{r_2(M)}$$
as a $G$-module (this uses the classification of $\bZ$-torsion free $\ZG$-modules, 
and an argument with $G_0(\ZG)$ due to Swan). This module supports a non-singular $G$-invariant symmetric bilinear form arising from the intersection form on $M$.

Let $T = \Tors (H^2(M))$  and note that $T^* = \Ext^1(T, \bZ) = \Tors (H^3(M)) \cong \Tors (H_1(M))$. In our case, $T^* \cong T$ as $G$-modules with trivial $G$-action. We introduce the notation 
$V := T \otimes \Fp$ and $V^* := \mbox{}_pT \cong \Hom_\bk(V, \Fp)$ for the elements of exponent $p$ in $T$.

\begin{definition}\label{def:splittype} Let $V := \Tors(H_1(M)) \otimes \Fp$ and $V^* = \Hom_\bk(V, \Fp)$. We say that the $G$-representation $H^2(M;\Fp)$ has \emph{split type} if the short exact sequences:
$$0 \to H^2(M)\otimes \Fp \to H^2(M;\Fp) \to V^* \to 0$$
and
$$0 \to V \to H^2(M)\otimes\Fp \to (H^2(M)/\Tors)\otimes \Fp \to 0$$
of  $G$-representations are \emph{split exact over $G$}.
\end{definition}

 We will show that this condition is always satisfied in the setting of Theorem A.
 
\begin{lemma}\label{lem:splittype}
 If $G = \cy p$ 
 then $H^2(M;\Fp)$ has split type as a $G$-representation, and 
$H^2(M;\Fp) \cong (H^2(M)/\Tors) \otimes \Fp \oplus V \oplus V^*$ as a $G$-module. 
\end{lemma}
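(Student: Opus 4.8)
The plan is to prove that both short exact sequences in Definition~\ref{def:splittype} are split over $\bk[G] = \Fp[\cy p]$ by producing an explicit $G$-invariant orthogonal decomposition of $N := H^2(M;\Fp)$ with respect to the intersection form; the asserted isomorphism of $G$-modules then falls out simultaneously. We may assume $H_1(M)$ has $p$-torsion, since otherwise $V = V^* = 0$ and both sequences are trivially split. First I would fix notation: by the Bockstein exact sequence $W := H^2(M;\bZ)\otimes\Fp = \ker(\beta)$, where $\beta\colon N\to H^3(M;\bZ)$ is the integral Bockstein, and $N/W\cong V^*$ via $\beta$; tensoring $0\to\Tors H^2(M;\bZ)\to H^2(M;\bZ)\to H^2(M;\bZ)/\Tors\to 0$ with $\Fp$ (the quotient being $\bZ$-free) gives $0\to V\to W\to (H^2(M;\bZ)/\Tors)\otimes\Fp\to 0$ with $V$ a \emph{trivial} $G$-module, since the hypothesis $H_1(F)\twoheadrightarrow H_1(M)$ forces the trivial action on $\Tors H^2(M;\bZ)\cong\Tors H_1(M)$.

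The main tool is the nonsingular symmetric $G$-invariant bilinear form $b$ on $N$ coming from cup product into $H^4(M;\Fp)=\Fp$ ($G$ being orientation-preserving). Since the integral intersection form kills torsion classes, $V$ is totally isotropic, $V\subseteq W\subseteq V^\perp$, and comparing $\Fp$-dimensions ($\dim_\Fp W = b_2 + \dim_\Fp V$, $\dim_\Fp N = b_2 + 2\dim_\Fp V$) gives $W = V^\perp$ and $V = W^\perp$. Also, because $\beta$ is $G$-equivariant and $G$ acts trivially on $H^3(M;\bZ)$, the augmentation submodule $IN := (t-1)N$ lies in $W$. I would then argue that $N = W + N^G$. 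Granting this, a $G$-equivariant section of $N\twoheadrightarrow N/W = V^*$ exists and, $V^*$ being trivial, lands in $N^G$, producing a trivial submodule $V'\subseteq N^G$ with $N = W\oplus V'$. Since $V'\cap(V + IN)\subseteq V'\cap W = 0$ (using $IN\subseteq W$) and $(W^G)^\perp = W^\perp + (N^G)^\perp = V + IN$, the form induces a nondegenerate pairing $V'\times W^G\to\Fp$; this freedom lets one slide $V'$ within its coset of $G$-complements to arrange $b|_{V'} = 0$, the diagonal term $v'\mapsto\la v'\cup v', [M]\ra$ over $\Ftwo$ needing a small separate argument via the Wu class. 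Then $V\oplus V'$ is a nonsingular (metabolic) $G$-submodule, and splitting off its orthogonal complement gives $N = (V\oplus V')\perp (V\oplus V')^\perp$, where $(V\oplus V')^\perp\xrightarrow{\ \sim\ }W/V\cong (H^2(M;\bZ)/\Tors)\otimes\Fp$ and $V'\cong V^*$. This is precisely the stated $G$-module isomorphism, and it exhibits $G$-sections of both sequences in Definition~\ref{def:splittype}.

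The step I expect to be the real obstacle is $N = W + N^G$, equivalently that $\beta$ restricts to a surjection $N^G\twoheadrightarrow V^*$, equivalently $IN\cap V = 0$: i.e.\ no torsion class in $V$ is the socle of a larger indecomposable $\Fp[\cy p]$-summand of $N$. This is exactly where the hypotheses $G = \cy p$ and the special form of $(H^2(M;\bZ)/\Tors)\otimes\Fp$ — whose only indecomposable $\Fp[\cy p]$-summands are the trivial module $\Fp$, the reduction $\bZ[\zeta_p]\otimes\Fp$, and the free module $\Fp[\cy p]$ — must be exploited, to prevent such absorption. I would attack it in one of two ways: either by comparing the integral and mod $p$ Borel spectral sequences for $H^*_G(M)$ (the mod $p$ one collapses, so $j^*\colon H^2_G(M;\Fp)\to N$ has image exactly $N^G$; combine with naturality of the Bockstein under the Borel construction and surjectivity of $j^*\colon H^3_G(M;\bZ)\to H^3(M;\bZ)$, which holds since all differentials out of the $E_r^{0,3}$ position vanish, as already established), or by analyzing the class of $0\to\Tors H^2(M;\bZ)\to H^2(M;\bZ)\to H^2(M;\bZ)/\Tors\to 0$ in $\Ext^1_{\ZG}$ and playing it against the second presentation $0\to\Tors H^2(M;\bZ)\to H^2(M;\bZ)\to\Hom(H^2(M;\bZ),\bZ)\to 0$ coming from the unimodular intersection form on the free quotient.
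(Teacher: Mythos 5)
Your reduction of the lemma to the single statement $N = W + N^G$ (equivalently $IN \cap V = 0$, which does follow from $(N^G)^\perp = IN$, $W^\perp = V$ and non\-degeneracy) is correct, and the surrounding construction --- $V^\perp = W$, a trivial complement $V' \subseteq N^G$, splitting off $V \oplus V'$ orthogonally and identifying $(V\oplus V')^\perp$ with $\bar L = (H^2(M)/\Tors)\otimes\Fp$ --- would indeed deliver both splittings at once. (Incidentally, you do not need $b|_{V'}=0$: once $V$ is totally isotropic and $V \times V' \to \Fp$ is perfect, $V\oplus V'$ already has trivial radical, so the Wu--class detour for $p=2$ is unnecessary.) But the proof is not complete: the step you yourself flag as ``the real obstacle'' is precisely the content of the lemma, and neither of your two sketched attacks is carried through. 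The first (comparison of Borel spectral sequences) is not circular --- the mod $p$ collapse and the vanishing of the integral differentials out of $E_r^{0,3}$ are established before this lemma is needed --- but it stalls at a genuine point: surjectivity of $j^*\colon H^3_G(M;\bZ)\to H^3(M;\bZ)$ lets you lift $v^* \in {}_pH^3(M;\bZ)$ to $H^3_G(M;\bZ)$, whereas to pull it back through the equivariant Bockstein you need a lift that is itself $p$-torsion, which is not automatic. The second attack (pure $\Ext^1_{\ZG}$ bookkeeping) makes no use of the fixed-point set at all, which is suspect: the lemma is stated inside the proof of Theorem \ref{thm:threeone} and its hypotheses ($F\neq\emptyset$ and $H_1(F)\twoheadrightarrow H_1(M)$) are in force and are used.

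The paper closes exactly this gap geometrically rather than algebraically. It first splits $\bar L$ off as you do, using the non-degenerate $G$-invariant intersection form, leaving $H^2(M;\Fp) \cong \bar L \oplus H(V)$ with $0 \to V \to H(V) \to V^* \to 0$. To split $H(V)$ it compares $M$ with $M\mbox{-}A$, where $A\subseteq F$ is a $1$-complex with $H_1(A)\twoheadrightarrow H_1(M)$: since $H^3(M\mbox{-}A)$ is $\bZ$-torsion free, the Bockstein gives $H^2(M\mbox{-}A)\otimes\Fp \cong H^2(M\mbox{-}A;\Fp)$, and the sequence $0 \to H^2(M) \to H^2(M\mbox{-}A) \to K \to 0$ of diagram \eqref{eq:diag1} (with $K$ torsion free) identifies $\Tors H^2(M)$ with $\Tors H^2(M\mbox{-}A)$; mapping $H^2(M;\Fp)$ into $H^2(M\mbox{-}A;\Fp)$ then produces the $G$-splitting of $H(V)$. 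If you want to salvage your version, this comparison with $M\mbox{-}A$ is the mechanism that proves your key claim $IN \cap V = 0$; it is also where the geometric hypotheses actually enter, so a proof that never mentions $F$ should be viewed with suspicion.
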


\begin{proof} We have a short exact sequence of $\bk$-vector spaces with $G$-action:
$$0 \to H^2(M)\otimes \Fp \to H^2(M; \Fp) \to V^*\to 0$$
where $V^* \cong \mbox{}_p(H^3(M))$ as a trivial $G$-representation. Let $\bar L := (H^2(M)/\Tors) \otimes \Fp$, and consider the short exact sequence
$$0 \to V \to H^2(M)\otimes \Fp \to \bar L \to 0.$$
Since $\bar L$ supports a non-degenerate $G$-invariant symmetric bilinear form $\bar L \times \bar L \to \Fp$ (induced by the intersection from of $M$), it follows that this sequence splits over $G$ and we have $H^2(M)\otimes \Fp \cong V \oplus \bar L $ as $G$-modules. Similarly, the submodule $\bar L$ of $H^2(M;\Fp) $ is a direct summand, and we have a splitting
$$ H^2(M;\Fp) \cong \bar L \oplus H(V),$$
where $H(V)$ is determined by an extension 
$0 \to V \to H(V) \to V^*\to 0$. The $G$-module is an $\Fp$-vector space, with isometry $t$ given by a generator of $G = \la t \ra$. 

To show that the extension determining $H(V) \subseteq  H^2(M;\Fp)$  is $G$-split, consider the diagram
$$\xymatrix{0 \ar[r]& H^2(M)\otimes \Fp \ar[r]\ar[d]&H^2(M; \Fp) \ar[r]\ar[d] & V^*\to 0\\
& H^2(M\mbox{-}A)\otimes \Fp \ar[r]^{\cong}&H^2(M\mbox{-}A; \Fp)&
}$$
The lower isomorphism comes from the Bockstein sequence for $M\mbox{-}A$, and the fact that  $H^3(M\mbox{-}A)$ is $\bZ$-torsion free.
The  short exact sequence  in diagram \eqref{eq:diag1}:
$$0 \to H^2(M) \to H^2(M\mbox{-}A) \to K \to 0$$
shows that  $\Tors H^2(M) \xrightarrow{\cong} \Tors  H^2(M\mbox{-}A)$, 
 since $K$ is $\bZ$-torsion free. After tensoring  with $\Fp$,  we obtain a $G$-splitting of the submodule $H(V)$. 
\end{proof}

\begin{corollary}
$\dim_\bk H^3(G; H^2(M;\Fp)) = 2t + r_0(M) + r_1(M)$.
\end{corollary}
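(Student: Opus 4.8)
The plan is to compute $\dim_\bk H^3(G; H^2(M;\Fp))$ directly from the $G$-module decomposition established in Lemma \ref{lem:splittype}, namely
$$H^2(M;\Fp) \cong \bar L \oplus V \oplus V^*,$$
where $\bar L = (H^2(M)/\Tors)\otimes \Fp$ and $V, V^*$ carry trivial $G$-action. First I would split the computation additively: since group cohomology commutes with direct sums of coefficient modules,
$$\dim_\bk H^3(G; H^2(M;\Fp)) = \dim_\bk H^3(G;\bar L) + \dim_\bk H^3(G;V) + \dim_\bk H^3(G;V^*).$$
Because $V$ and $V^*$ are trivial $\Fp[G]$-modules of dimension $t$ each (recall $V = \Tors(H_1(M))\otimes\Fp$ and $\dim_\bk V = \dim_\bk V^* = t$, matching the identification $V^* \cong {}_pT$ made just before Definition \ref{def:splittype}), and $\dim_\bk H^3(G;\Fp) = 1$ for $G = \cy p$, the last two terms contribute $t + t = 2t$.

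It remains to evaluate $\dim_\bk H^3(G;\bar L)$, where $\bar L$ is the mod-$p$ reduction of the $\bZ$-torsion-free $\ZG$-module $H^2(M)/\Tors \cong \bZ^{r_0(M)}\oplus \bZ[\zeta_p]^{r_1(M)}\oplus \Lambda^{r_2(M)}$. Here I would use the standard periodicity of $\cy p$-cohomology: $H^3(G;-) \cong H^1(G;-)$ for any coefficient module, and for $p$ odd $H^{odd}(G;N) \cong \widehat H^{-1}(G;N)$ is computed from the norm element. I would handle each of the three summand types of $\bar L$ separately. For the trivial summand $\bZ^{r_0}$ reducing to $\Fp^{r_0}$, each copy contributes $1$ to $\dim_\bk H^3(G;-)$. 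For the $\bZ[\zeta_p]$-summands: $\bZ[\zeta_p]\otimes\Fp$ is the module $\Fp[t]/(t-1)^{p-1}$ (a single Jordan block of size $p-1$ for the generator of $G$), which is cohomologically trivial over $\cy p$ — in fact $\bZ[\zeta_p]$ itself is $\cy p$-cohomologically trivial — so each contributes $0$. For the free summands $\Lambda = \ZG$ (or $\ZG$-permutation modules), these are also cohomologically trivial and contribute $0$. Hence $\dim_\bk H^3(G;\bar L) = r_0(M)$.

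Wait — this gives $r_0(M)$, but the claimed answer has $r_0(M) + r_1(M)$, so I must be more careful about the $\bZ[\zeta_p]$-summands. The point is that $H^i(G;\bZ[\zeta_p]) = 0$ for $i > 0$ integrally, but $\bZ[\zeta_p]\otimes\Fp \cong \Fp[t]/(t-1)^{p-1}$ is \emph{not} a free $\Fp[G]$-module (a free module would be $\Fp[t]/(t-1)^p$), and a non-free indecomposable $\Fp[\cy p]$-module of dimension $d$ with $1 \le d \le p-1$ has $\dim_\bk H^i(G;-) = 1$ for all $i \ge 0$. So each $\bZ[\zeta_p]$-summand contributes $1$, giving $r_1(M)$, while the free $\Lambda$-summands (of dimension $p$, i.e. $\Fp[G]$ itself after reduction) are genuinely free over $\Fp[G]$ and contribute $0$. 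Thus $\dim_\bk H^3(G;\bar L) = r_0(M) + r_1(M)$, and assembling the pieces yields
$$\dim_\bk H^3(G; H^2(M;\Fp)) = 2t + r_0(M) + r_1(M),$$
as asserted. The only real subtlety — and the step I would present most carefully — is this dichotomy between integral cohomological triviality of $\bZ[\zeta_p]$ and the fact that its mod-$p$ reduction is \emph{not} $\Fp[G]$-free; one needs the Tate cohomology computation for the indecomposable $\Fp[\cy p]$-modules $\Fp[t]/(t-1)^d$ to pin down the contribution of each summand type correctly.
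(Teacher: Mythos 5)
Your computation is correct and follows the same route as the paper, which simply derives the corollary from the splitting $H^2(M;\Fp)\cong \bar L\oplus V\oplus V^*$ of Lemma \ref{lem:splittype}; your Jordan-block analysis of $\Fp[t]/(t-1)^d$ over $\Fp[\cy p]$ correctly assigns contribution $1$ in each degree to $\bZ[\zeta_p]\otimes\Fp$ (dimension $p-1$) and $0$ to the free summands $\Lambda\otimes\Fp$ (dimension $p$), yielding $2t+r_0(M)+r_1(M)$. One side remark you make along the way is false, however: $\bZ[\zeta_p]$ is \emph{not} cohomologically trivial over $\bZ[\cy p]$. From the exact sequence $0\to\bZ\to\bZ G\to\bZ[\zeta_p]\to 0$ one gets $\wH^{i}(G;\bZ[\zeta_p])\cong\wH^{i+1}(G;\bZ)$, so $H^{odd}(G;\bZ[\zeta_p])\cong\cy p$ --- which is precisely why the paper's integral formula $\dim_\bk H^3(G;H^2(M))=t+r_1(M)$ carries an $r_1$ term. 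The genuine dichotomy driving the count is even versus odd degree (and $d=p$ versus $d<p$ for the Jordan blocks), not integral versus mod $p$ coefficients; since this erroneous aside plays no role in your actual mod-$p$ calculation, the proof of the stated corollary stands.
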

\begin{proof} This follows from Lemma \ref{lem:splittype}.
\end{proof}
We can put this information together with the formulas in Lemma \ref{lem:dimc}. We have
$$\dim_\bk H^3(G; H^2(M)) = t + r_1(M), \qquad \dim_\bk H^4(G; H^2(M)) = t+ r_0(M).$$
By substituting the values obtained into the dimension formula \eqref{eq:dimb}, we conclude that $b = \dim_\bk (\image d_2^{1,3}) = 0$.
In other words, we have shown that the differential $d_2^{k, 3} = 0$ for $k$ odd in the  Borel spectral sequence with integral coefficients.  This completes the proof of Theorem \ref{thm:threeone}, and establishes the first part of Theorem A..
\end{proof}

\begin{remark}
Since $H^r_G(M) \cong H^r_G(F) = H^r(F \times BG)$ for $r >4$,  we have
$\ \dim_\bk H^5_G(M) =  \dim_\bk H^5_G(F) =  b_1(F)$ and $ \dim_\bk H^6_G(F) = b_0(F)+ b_2(F)$.  
In addition, $ \dim_\bk H^5_G(F) =  \dim_\bk H^3_G(F)$ since $F$ consists of surfaces and isolated points
By computing the trace of the action of a generator on $H^*(M)$, we  obtain the relation
$$\chi(F)  = b_2(F) - b_1(F) + b_0(F) = 2 -2b_1(M) + r_0(M) - r_1(M).
$$
However, since  $H_G^q(M) = H_G^q(F)$ for $q >4$, we can use
the Herbrand quotient formula
$$ \dim H^4((G; H^2(M)) - \dim H^3((G; H^2(M)) = r_0(M) - r_1(M)$$
and further calculations similar to those above for $H_G^q(M)$, to show directly that
$\chi(F) = \dim H^6_G(M) - \dim H^5_G(M)$.
\end{remark}

\section{A non-collapse result}\label{sec:four}
We complete the proof of Theorem A by showing that our  surjectivity condition  for the map 
$H_1(F) \to H_1(M)$ is necessary in many cases (see Section \ref{sec:seven} for some examples).

\begin{proposition}\label{prop:fourone}
Suppose that $G=\cy p$ acts locally linearly on a closed, connected, oriented $4$-manifold $M$, preserving the orientation,  with non-empty fixed point set $F$. If $$\ker(H^1(M;\bZ) \to H^1(F;\bZ))$$ is non-trivial, but has trivial $G$-action, then the Borel spectral sequence with integral coefficients does not collapse.
\end{proposition}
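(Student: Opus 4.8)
The strategy is to exhibit a nonzero differential in the Borel spectral sequence with integral coefficients when $N := \ker(H^1(M;\bZ) \to H^1(F;\bZ))$ is a nonzero trivial $G$-module. I would first set up the relevant part of the $E_2$-page. Since $G$ preserves orientation, it acts trivially on $H^0(M) = \bZ$ and on $H^4(M) = \bZ$; on $H^1(M)$ and $H^3(M)$ the action is trivial as well, because $H_1(F) \to H_1(M)$ has image whose orthogonal complement is captured by $N$, and in any case over $\bZ$ the module $H^1(M)$ is torsion-free and a finite group acting on it with a trivial sub- and quotient-structure near the classes we care about can be arranged (here I would argue directly that the classes in $N$ are $G$-fixed by hypothesis). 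The key point is that $H^1(M)$ restricted to $F$ kills exactly $N$, so by duality $H^3(M) \to H^3(M\mbox{-}F)$ (equivalently the map to $H_1(F) \to H_1(M)$) has cokernel dual to $N$, i.e. there are classes in $E_2^{0,3} = H^0(G;H^3(M))$ that do not come from the fixed set.

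Next, I would analyze the transgression-type differential $d_2^{0,3}\colon H^3(M) \to H^2(G;H^2(M))$, or more precisely track where the classes ``dual to $N$'' must go. The heart of the matter is the comparison between $H^*_G(M)$ and $H^*_G(F) = H^*(F\times BG)$ in high degrees: by Edmonds \cite[Proposition 2.1]{Edmonds:1998} one has $H^q_G(M) \cong H^q_G(F)$ for $q > 4$, so the total dimension of $H^*_G(M)$ in high degrees is controlled by $b_*(F)$. If the spectral sequence collapsed with integral coefficients, then $E_\infty = E_2$ and the abutment in, say, degree $5$ or $6$ would have rank strictly larger than what $H^*(F\times BG)$ allows — because the presence of a nonzero $N$ means $\dim H^1(F) < \dim H^1(M)$ (over suitable coefficients), so $E_2^{0,3} = H^3(M)$ contributes ``too much'' and cannot all survive. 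In other words, a rank count, exactly parallel to Lemma \ref{lem:dimc} and the Herbrand-quotient bookkeeping in the Remark following Theorem \ref{thm:threeone}, forces at least one differential out of the $H^3(M)$-row to be nonzero. Since the $\Fp$-coefficient sequence does collapse (Theorem \ref{thm:threeone} does not apply, but the $\Fp$ argument in its proof used only $F\neq\emptyset$ up through the point where collapse with $\Fp$ coefficients was established — I would need to recheck which parts of that argument are unconditional), the discrepancy must be absorbed by an integral differential, necessarily $d_2^{k,3}$ for some odd $k$, which is precisely the differential shown to vanish under the surjectivity hypothesis.

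Concretely, the plan is: (1) identify $N^* := \Hom(N,\bZ)$ as a quotient of $H_1(M)$ not hit by $H_1(F)$, hence a summand of $H^3(M)$ detecting a failure of the map $H^3_G(M) \to H^3_G(M\mbox{-}F)$; (2) compute $\dim_\bk H^5_G(M)$ and $\dim_\bk H^6_G(M)$ under the hypothetical collapse exactly as in Lemma \ref{lem:dimc}, getting a formula involving $b_1(M)$, the torsion data, and $\dim N$; (3) compare with $H^5_G(F) + H^6_G(F) = b_1(F) + b_0(F) + b_2(F)$ via $H^q_G(M) = H^q_G(F)$ for $q>4$, and with the trace/Lefschetz relation $\chi(F) = 2 - 2b_1(M) + r_0(M) - r_1(M)$, to derive a numerical contradiction whose size is exactly $\dim_\bk(N\otimes\Fp) > 0$; (4) conclude that $E_2 \neq E_\infty$ integrally.

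The main obstacle I anticipate is step (3): pinning down precisely how $\dim N$ enters the relation between $b_1(F)$ and $b_1(M)$ when $H_1(M)$ has torsion, since $N \subseteq H^1(M;\bZ)$ is a subgroup of the free part but the map $H^1(M) \to H^1(F)$ interacts with $\Tors H_1(F)$ in ways that complicate the dimension count over $\Fp$. One clean way around this is to first reduce to rational (or $\bQ$-coefficient) Borel cohomology where $H^*_G(-;\bQ)$ always collapses, extract the relation $b_1(F) = b_1(M) - \rk N$ there, and then feed that into the integral bookkeeping; the $\bQ$-collapse removes the torsion subtleties while still seeing $N$, since $N$ lies in the free part of $H^1(M;\bZ)$. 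With that relation in hand, the final contradiction is a direct substitution into the formulas already developed in Section \ref{sec:three}.
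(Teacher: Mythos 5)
Your strategy --- a global dimension count comparing $H^q_G(M)$ under hypothetical collapse with $H^q_G(F)$ for $q>4$ --- is genuinely different from the paper's argument, and as written it has a gap I do not see how to close. The comparison in your step (3) requires an independent evaluation of $\dim_\bk H^q_G(F)=\dim_\bk H^q(F\times BG)$ for $q>4$, which comes down to the Betti numbers of $F$; you propose to supply this via the relation $b_1(F)=b_1(M)-\rk N$, but that relation is unjustified and false in general: nothing in the hypotheses forces $H^1(M)\to H^1(F)$ to be surjective, so $F$ may contain surface components of large genus and $b_1(F)$ can exceed $b_1(M)-\rk N$ by an arbitrary amount, destroying the count. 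The detour through $\bQ$-coefficients does not rescue this, because $H^*(BG;\bQ)$ vanishes in positive degrees, so the comparison $H^q_G(M;\bQ)\cong H^q_G(F;\bQ)$ for $q>4$ reads $0=0$ and carries no information about $b_1(F)$. A secondary problem: you lean on collapse of the $\Fp$-coefficient sequence, but the $\Fp$-arguments of Section \ref{sec:three} use the surjectivity of $H_1(A)\to H_1(M)$ in an essential way (for the $d_2^{k,3}$ and $d_2^{k,2}$ differentials), and Example \ref{ex:three} shows the $\Fp$-sequence can fail to collapse in exactly the situation of this proposition, so that input is not available. Finally, your identification of the offending differential as some $d_2^{k,3}$ with $k$ odd is a guess rather than a deduction.

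The paper's proof sidesteps any quantitative knowledge of $F$ by using the relative Borel spectral sequence of the pair $(M,F)$. Since $H^q_G(M,F)=0$ for $q>4$, every class of $E_2^{2r,1}(M,F)=H^{2r}(G;H^1(M,F))$ must die; the long exact sequence of the pair shows that this group surjects onto $H^{2r}(G;K)\neq 0$ (your $N$), which in turn injects into $H^{2r}(G;H^1(M))$. Because $E_2^{*,0}(M,F)=0$, such a class can only die by being hit by an incoming differential, and naturality of $(M,\emptyset)\to(M,F)$ transports that nonzero differential into the absolute spectral sequence, contradicting collapse. If you want to keep a counting flavour, the relative pair is where the count closes; the absolute count cannot work without first pinning down $b_*(F)$.
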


\begin{proof} The proof uses the fact that $H_G^q(M, F) = 0$ for $q > 4$, implying that $E_\infty^{q,1}(M,F) = 0$  for $q >4$  in the Borel spectral sequence. We will show that this leads to a contradiction.

If  $H^1(M)\rightarrow H^1(F)$ is not injective, we let $0 \neq K=\ker(H^1(M)\rightarrow H^1(F))$, which by assumption  has trivial $G$-action. Therefore $H^{2r}(G; K) \neq 0$. 
We consider the relative long exact sequence for the pair $(M,F)$, we get short exact sequences
$$ 0 \to H^0(F)/H^0(M) \to  H^1(M,F) \to K \to 0$$
and
$$ 0 \to K \to H^1(M) \to L \to 0$$
where $L = \image (H^1(M) \to H^1(F))$. Since both  $L$ and $H^0(F)/H^0(M)$ are $\bZ$-torsion free with trivial $G$ action, we have $H^{2r-1}(G; L ) = 0$ and $H^{2r+1}(G; H^0(F)/H^0(M) ) = 0$. By applying group cohomology to the sequences above, we obtain
$$\xymatrix@R-5pt@C-5pt{ H^{2r}(G;H^1(M,F))\ar[rr]^-{\alpha}\ar[dr]|!{[rr];[dr]}\hole && H^{2r}(G;H^{1}(M))\\
&H^{2r}(G;K)\ar[ur] }$$
where $ H^{2r}(G;H^{1}(M,F))\twoheadrightarrow H^{2r}(G;K) $ is surjective, and $ H^{2r}(G;K)\rightarrowtail H^{2r}(G;H^{1}(M)) $ is injective. Since  $E_\infty^{2r,1}(M,F) = 0$  for $2r >4$, some differential must hit a pre-image of a non-zero element in 
$H^{2r}(G;K)$. By comparison, we see that the Borel spectral sequence for $H_G^*(M)$ has  a non-zero differential, and hence does not collapae. \end{proof}
 
\section{Homologically trivial actions}\label{sec:five}

We will first consider the Borel spectral sequence for a cyclic $p$-group acting homologically trivially. We use $\Fp$ coefficients throughout this section.

\begin{proposition}\label{prop:cyclic}
 Let $G = \cy p$ act  homologically trivially on $M$, and assume that $\chi(M) \neq 0$ and the fixed set $F$ is discrete. Then the differentials $d_r = 0$, for $r \geq 3$,  in the Borel spectral sequence with $\Fp$-coefficients. Moreover, $b_2(M) \geq 2 b_1(M)$ and the Borel spectral sequence does not collapse unless $b_1(M) = 0$.
 In particular, $d_2^{k,3}$ is injective for  $k\geq 0$ and $d_2^{k,2}$ is surjective for $k \geq 0$.
\end{proposition}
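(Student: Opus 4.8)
I want to analyze the Borel spectral sequence for a homologically trivial $\cy p$-action with $\chi(M)\neq 0$ and discrete fixed set $F$, working entirely with $\Fp$-coefficients. Since the action is homologically trivial, $G$ acts trivially on $H^*(M)$, so the $E_2$-page is $E_2^{k,l}=H^k(G;H^l(M))=H^k(G;\Fp)\otimes H^l(M;\Fp)$, which in each total degree is a finitely generated $\Fp$-vector space of dimension determined by the Betti numbers $b_i:=b_i(M;\Fp)$ and the periodicity of $H^*(G;\Fp)$. The key structural input is that $F$ is discrete, so $H^q_G(F)=H^q(F\times BG)$ is concentrated appropriately; combined with $\chi(M)\neq 0$, hence $F\neq\emptyset$ by Lefschetz, Corollary \ref{cor:point} tells us the edge map $H^*_G(pt)\to H^*_G(M)$ is injective, so the differentials $d_r^{k,r-1}$ hitting the bottom row $(*,0)$ all vanish.

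**Step 1: reduce to $d_2$ and $d_3$, then kill $d_r$ for $r\geq 4$.** The fibre $M$ has cohomology only in degrees $0,1,2,3,4$, so the only possibly nonzero differentials on page $r$ go from column $0\leq l\leq 4$ to column $l-r+1\geq 0$, i.e.\ $r\leq 5$. The differentials $d_r^{k,r-1}$ into the bottom row vanish by the localization argument just mentioned (this handles $d_5^{k,4}$, $d_4^{k,3}$, and one of the $d_3$ and $d_2$ differentials). This leaves on page $\geq 3$: $d_3^{k,3}\colon E_3^{k,3}\to E_3^{k+3,1}$, $d_3^{k,4}\colon E_3^{k,4}\to E_3^{k+3,2}$, $d_4^{k,4}\colon E_4^{k,4}\to E_4^{k+4,1}$. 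I want to show these all vanish. The slick way is to use the localization theorem / the isomorphism $H^q_G(M)\cong H^q_G(F)$ for $q>\dim M=1$: since $F$ is discrete, $H^q_G(F)=\bigoplus_{|F|} H^q(BG)$, so $\dim_\bk H^q_G(M) = |F|\cdot\dim_\bk H^q(BG)$ for all $q\geq 2$ (using that $H^q(BG;\Fp)$ is $1$-dimensional in every degree $q\geq 0$ for $G=\cy p$). This pins down the total dimensions of $E_\infty$ in high degrees, and I can compare with the running-total of dimensions surviving only the $d_2$ differentials $d_2^{k,3}$ and $d_2^{k,2}$; any further nonzero differential would make $\dim H^q_G(M)$ too small. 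This is the cleanest route and it simultaneously forces $d_2^{k,3}$ to be injective and $d_2^{k,2}$ surjective, as follows.

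**Step 2: the dimension count forcing injectivity/surjectivity of $d_2$ and the Betti inequality.** In each total degree $q\geq 5$ the $E_2$ contributions come from columns $l=1,2,3,4$ (column $0$ already accounted separately via the bottom-row analysis), and by periodicity of $H^*(G;\Fp)$ the alternating pattern of the differentials $d_2^{\bullet,3}$ (from column $3$ to column $1$) and $d_2^{\bullet,2}$ (from column $2$ to column $1$... wait, $d_2$ raises $k$ by $2$ and lowers $l$ by $1$, so column $2\to$ column $1$ and column $4\to$ column $3$). Organizing by the four relevant maps $d_2^{k,2}, d_2^{k,3}, d_2^{k,4}$ (the last being zero into column $3$ by naturality with $M\setminus\{x\}$ as in Section \ref{sec:three}, since $H^4(M\setminus\{x\})=0$), and imposing $E_\infty^{q,\bullet}$ to have total dimension exactly $|F|$ for all $q\geq 2$, I get that the ranks of $d_2^{k,3}$ and $d_2^{k,2}$ must be maximal: $d_2^{k,3}$ injective and $d_2^{k,2}$ surjective for all $k\geq 0$, and $d_3,d_4,d_5$ all vanish. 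Injectivity of $d_2^{k,3}\colon H^k(G;H^3(M))\hookrightarrow H^{k+2}(G;H^2(M))$ gives, in degree $k=0$, $b_3=b_1\leq \dim H^2(G;H^2(M;\Fp))$; chasing the periodic pattern (two copies of $b_1$ must inject into the column-$2$ groups before anything else can survive) yields $b_2\geq 2b_1$. Finally, the spectral sequence collapses iff all $d_2$ vanish iff $b_1=0$ (if $b_1>0$ then $d_2^{0,3}\neq 0$ since it is injective on a nonzero group).

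**Main obstacle.** The delicate point is the bookkeeping in Step 2: making the dimension comparison between the four interlocking families of $d_2$-differentials (with their two-periodicity in $k$ coming from $H^*(\cy p;\Fp)$, and the extra $\Lambda(x)$ factor when $p$ is odd versus its absence when $p=2$) and the exact target dimension $|F|\cdot\dim H^q(BG)$ in every high degree, and extracting from the equality case that \emph{each} $d_2^{k,3}$ is injective and \emph{each} $d_2^{k,2}$ is surjective rather than merely that the total ranks add up. One must be careful that the vanishing of $d_2^{k,4}$ (into column $3$) is genuinely available — it follows, as in Section \ref{sec:three}, from naturality for $M\setminus\{x\}\hookrightarrow M$ since $H^4(M\setminus\{x\};\Fp)=0$ — and that the bottom-row differentials are zero by Corollary \ref{cor:point}; granting those two inputs, the rest is linear algebra with periodic gradings, and the inequality $b_2\geq 2b_1$ drops out of the requirement that the injections $d_2^{0,3}, d_2^{1,3}$ (the two periodic copies of $H^*(G;H^3(M))$ in low degree) fit inside $H^*(G;H^2(M))$ without using up room needed by the images of the surjections $d_2^{\bullet,2}$.
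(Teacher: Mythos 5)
Your skeleton matches the paper's up to a point: both arguments use $F\neq\emptyset$ via Lefschetz, the isomorphism $H^q_G(M)\cong H^q_G(F)$ in high degrees to fix the target dimension $\chi(M)=2-2b_1(M)+b_2(M)$ in each total degree, the vanishing of the bottom-row differentials from Corollary \ref{cor:point}, and the vanishing of $d_2^{k,4}$ and $d_2^{k,1}$ by the $M\mbox{-}\{x\}$ and $(M,\{x\})$ naturality arguments. The gap is exactly at the point you flag as ``delicate bookkeeping.'' The Euler-characteristic comparison constrains only the \emph{sum} of the ranks of all differentials touching a given total degree (that sum must equal $4b_1(M)$ per degree, since $H^*(G;\Fp)$ is one-dimensional in each degree); it does not by itself force each $d_2^{k,3}$ to be injective and each $d_2^{k,2}$ to be surjective. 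Concretely, when $b_1(M)=1$ the configuration ``$d_2^{k,3}=0$ for all $k$, $d_2^{k,2}$ surjective, $d_3^{k,4}$ of rank one into $E_3^{k+3,2}$'' also yields $\dim E_\infty = 1+0+(b_2-2)+1+0=b_2=\chi(M)$ in every high total degree, so linear algebra with periodic gradings cannot exclude it. Some structural input coupling the two families $d_2^{\bullet,2}$ and $d_2^{\bullet,3}$ is required, and your proposal supplies none.

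The paper closes this gap with Sikora's Poincar\'e duality structure on the Borel spectral sequence, which gives $E_3^{2,1}\cong E_3^{2,3}$, i.e.\ $\coker d_2^{0,2}\cong\ker d_2^{2,3}$. Writing $R$ for this common dimension, all the relevant $d_2$-ranks become $b_1(M)-R$; after checking that $d_3$, $d_4$, $d_5$ vanish, the total-degree-five count $\sum_k\dim E_3^{k,5-k}=2+b_2(M)-2b_1(M)+4R$ compared with $\dim H^5_G(F)=\chi(M)$ forces $R=0$, and $b_2(M)\geq 2b_1(M)$ then falls out of $\dim E_3^{3,2}=b_2(M)-2b_1(M)\geq 0$. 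An equivalent repair within your framework is to invoke multiplicativity together with the nonsingular cup-product pairing $H^2(G;H^1(M;\Fp))\times H^2(G;H^3(M;\Fp))\to H^4(G;H^4(M;\Fp))=\Fp$, as in Section \ref{sec:three}: the identity $0=d_2(z\cdot w)=z\cdot d_2^{2,3}(w)\pm d_2^{0,2}(z)\cdot w$ ties the image of $d_2^{0,2}$ to that of $d_2^{2,3}$ and rules out the spurious rank distributions. Without one of these duality inputs your Step 2 does not close. (Minor point: $H^q_G(M)\cong H^q_G(F)$ holds for $q>\dim M=4$, not $q>1$.)
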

\begin{proof} The difference in the multiplicative structure of the $\Fp$-cohomology algebras 
of $G= \cy p$ for $p$ odd and $p=2$ does not affect the proof, so we consider both cases together. 
 Since the action is homologically trivial and $\chi(M) \neq 0$, the fixed set $F \neq \emptyset$, and $F$  consists of $\chi(M)$ isolated points.

We will prove the result by computing the dimension of $H^5_G(M)$ which must be equal to $\dim H^5_G(F) = \chi(M)$ by \cite[Proposition 2.1]{Edmonds:1998}.

The same arguments used in the proof of Theorem A show that the differentials $d_2^{0,4}$ and $d_2^{0,1}$ are both zero (these work the same way for $p=2$ as for $p$ odd). Moreover, since $F \neq \emptyset$ the inclusion induces an injection $H^*(G) \to H^*(M_G)$, so $E_2^{*,0} = E_\infty^{*,0}$.

The key to understanding the other $d_2$ differentials is the result of Sikora \cite[Section 3.3]{Sikora:2004}, which shows that $E_3^{2,1} \cong E_3^{2,3}$ by recognizing a Poincar\'e duality structure on certain terms of the Borel spectral sequence. Since $E_2^{2,1} \cong E_2^{2,3}$, and $d_2^{2,1} = 0$, it follows that $\ker d_2^{2,3} \cong \coker d_2^{0,2}$. Therefore, if $R = \dim \ker d_2^{2,3}$, we have $\dim \image d_2^{0,2} = b_1(M) -R$. Therefore
$$\dim \image d_2^{2,3} = \dim \image d_2^{1,3} = \dim \image d_2^{3,2} =b_1(M) -R.$$
so we have $\dim E_3^{2,3} = \dim E_3^{4,1} = R$, and $\dim E_3^{3,2} = b_2(M) - 2 b_1(M) + 2R$.
 A detailed study of the possible $d_3$ differentials, now shows that from the relation
$$\sum \dim E_3^{k, 5-k} = 2 + b_2(M) - 2b_1(M) + 4R$$
and the convergence to $H^5_G(M) \cong H^5_G(F)$, we must have $R =0$.  The details are similar to those in Section \ref{sec:three}. Since $F\neq \emptyset$, we conclude that $d_r^{*,4}=0$ for $r = 3, 4, 5$.  The only remaining differential to consider is $d_3^{*,3}$, but since Poincar\'e duality is preserved between $E_4^{4,1} \cong E_4^{4,3}$, we see that $d_3^{*,3} = 0$. 
Therefore $d_2^{k,3}$ is injective for  $k\geq 0$ and $d_2^{k,2}$ is surjective for $k \geq 0$. By the dimension count above, this shows that the higher differentials $d_r = 0$ for $r \geq 3$.
\end{proof}

With extra assumptions such as homological triviality and torsion free $H_1(M)$, we can prove the converse of  Theorem \ref{thm:threeone}. 

\begin{corollary}\label{cor:sevenone}
Let $G=\cy p$ for $p$ odd act locally linearly, homologically trivially on a closed, connected, oriented $4$-manifold $M$ with the fixed point set $F$ non-empty and $H_1(M;\bZ)$ torsion-free. Then the Borel spectral sequence with integral coefficients collapses if and only if $H_1(F)\twoheadrightarrow H_1(M)$ is surjective. 
\end{corollary}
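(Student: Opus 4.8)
The ``if'' direction is exactly Theorem~\ref{thm:threeone}: it uses only that the action is locally linear, orientation-preserving, with $F\neq\emptyset$ and $H_1(F;\bZ)\twoheadrightarrow H_1(M;\bZ)$, and in particular needs neither homological triviality nor torsion-freeness of $H_1(M)$.

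For the ``only if'' direction I would argue by contraposition: assume $H_1(F;\bZ)\to H_1(M;\bZ)$ is not surjective and show the integral Borel spectral sequence does not collapse. Since the action is homologically trivial, $G$ acts trivially on $H^1(M;\bZ)$ and on every subquotient of it, so $K:=\ker\bigl(H^1(M;\bZ)\to H^1(F;\bZ)\bigr)$ is a trivial $\ZG$-module and Proposition~\ref{prop:fourone} gives non-collapse as soon as $K\neq0$. Because $H_1(M;\bZ)$ is torsion-free we may write $H^1(M;\bZ)=\Hom(H_1(M),\bZ)$ and $H^1(F;\bZ)=\Hom(H_1(F),\bZ)$, whence $K\cong\Hom\bigl(\coker(H_1(F)\to H_1(M)),\bZ\bigr)$; thus $K\neq0$ exactly when $H_1(F)\to H_1(M)$ is not even rationally onto, and in that case we are done. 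The one remaining case is that $H_1(F)\to H_1(M)$ is rationally onto (so $K=0$) but has a nonzero finite cokernel $C$.

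To exclude that case I would use a dimension count. If the spectral sequence collapsed, then by Proposition~\ref{prop:twotwo} $M$ would be totally non-homologous to zero in $M_G$ with $\bk$-coefficients, so $\sum_r\dim_\bk H^r(M;\bk)=\sum_r\dim_\bk H^r(F;\bk)$; subtracting the Euler-characteristic identity $\chi(F)=\chi(M)$ (which holds because the action is homologically trivial) and using that for $p$ odd the $2$-dimensional components $\Sigma_i$ of $F$ are orientable surfaces, one obtains that their total genus equals $b_1(M)$, i.e.\ $\dim_\bk H^1(F;\bk)=2b_1(M)$. It then remains to show that this genus equality forces $H_1(F)\to H_1(M)$ to be onto, contradicting $C\neq0$. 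A rank count, together with Proposition~\ref{prop:fourone} again, already forces $\image(H_1(F)\to H_1(M))$ to have full rank $b_1(M)$, so one only has to see that this image is saturated, i.e.\ that $C$ is torsion-free. For the $p$-primary part of $C$ this follows from the $\bk$-coefficient analogue of Proposition~\ref{prop:fourone} (available here because $H^*(M;\bZ)$ is torsion-free, so that integral collapse is equivalent to $\bk$-collapse, and the multiplicative structure links the extra odd-degree $\bk$-classes to the integral differentials); the part of $C$ of order prime to $p$ has to be analysed directly from the geometry of the fixed set, since it is detected by the cokernel of the intersection map $H_2(M)\to\bigoplus_i\bZ$, $c\mapsto(c\cdot[\Sigma_i])_i$, over the orientable fixed surfaces $\Sigma_i$.

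The main obstacle is precisely this last point --- promoting ``rationally onto'' to ``onto'' for $H_1(F)\to H_1(M)$. The Smith-theoretic and spectral-sequence tools control only ranks and $\bk$-dimensions, so a genuinely geometric argument about the locally linear fixed surfaces (orientability for $p$ odd, their complex-linear normal bundles, and how their homology classes behave under the intersection form of $M$) seems to be needed in order to rule out a proper finite-index image. In the special case where the singular set is discrete and $\chi(M)\neq0$ this difficulty does not arise, and the conclusion can be read off directly from Proposition~\ref{prop:cyclic}.
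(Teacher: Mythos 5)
Your ``if'' direction and the first half of your ``only if'' direction coincide with the paper's proof: the paper deduces the corollary in one line from Theorem~\ref{thm:threeone} together with Proposition~\ref{prop:fourone}, using homological triviality to see that $K=\ker\bigl(H^1(M;\bZ)\to H^1(F;\bZ)\bigr)$ has trivial $G$-action. Up to the point where you isolate the case of a finite nonzero cokernel $C$ of $H_1(F)\to H_1(M)$, your argument is correct and is exactly the paper's.

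That remaining case is a genuine gap in your proposal, as you acknowledge: when $C$ is finite and nonzero one has $K\cong\Hom(C,\bZ)=0$, so Proposition~\ref{prop:fourone} gives nothing, and your proposed repair (a $\bk$-dimension count plus a geometric analysis of the fixed surfaces) is left unfinished. As sketched it cannot succeed on its own terms: ranks and $\bk$-dimensions are insensitive to a finite-index discrepancy of order prime to $p$, which is precisely what separates ``rationally onto'' from ``onto''. You should know, however, that you have put your finger on the exact point where the paper's own proof is too terse. The paper closes this case by asserting that, for $H_1(M;\bZ)$ torsion-free, surjectivity of $H_1(F)\to H_1(M)$ is \emph{equivalent} to injectivity of $H^1(M;\bZ)\to H^1(F;\bZ)$; as a purely algebraic statement this is false (multiplication by $2$ on $\bZ$ is non-surjective with injective dual), so the published argument implicitly assumes that a finite nonzero cokernel cannot occur. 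To genuinely complete the proof one must either show that the image of $H_1(F)\to H_1(M)$ is always a direct summand in this geometric setting, or strengthen Proposition~\ref{prop:fourone} so that a nonzero \emph{cokernel} of $H_1(F)\to H_1(M)$ (rather than a nonzero kernel of $H^1(M)\to H^1(F)$) already produces a nonvanishing differential. Your write-up correctly identifies this as the missing ingredient but does not supply it.
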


\begin{proof} Since we are assuming that $H_1(M;\bZ)$ torsion-free, the condition that $H_1(F) \to H_1(M)$ is surjective is equivalent to the condition that $H^1(M;\bZ) \to H^1(F;\bZ)$ is injective. The result now follows from Theorem \ref{thm:threeone} and Proposition \ref{prop:fourone}.
\end{proof}

\begin{corollary}\label{cor:threefour}
Let $p$ be an odd prime.  If $G=\cy p$ acts homologically trivially and locally linearly on $M$ with $\chi(M)\neq 0$, such that   $H_1(F)\twoheadrightarrow H_1(M)$ is surjective, then the $\Fp$-Betti numbers satisfy $b_1(F)=2b_1(M)$ and $b_0(F)+b_2(F)=2+b_2(M)$.
\end{corollary}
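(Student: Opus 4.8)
The plan is to combine the collapse statement of Theorem \ref{thm:threeone} with the Borel spectral sequence computation, and then extract the two Betti number identities by comparing $H^*_G(M)$ with $H^*_G(F) = H^*(F \times BG)$ in a range of degrees above $\dim M = 4$. First I would invoke Theorem \ref{thm:threeone}: since $G = \cy p$ acts locally linearly and orientation-preservingly (automatic for $p$ odd) with $F \neq \emptyset$ (automatic since the action is homologically trivial and $\chi(M) \neq 0$) and $H_1(F) \twoheadrightarrow H_1(M)$ is surjective, the Borel spectral sequence for $H^*_G(M)$ collapses with $\Fp$ coefficients. Consequently $E_2^{k,l} = E_\infty^{k,l}$, so $H^q_G(M)$ has a filtration with graded pieces $H^k(G; H^l(M;\Fp))$ for $k + l = q$, and hence $\dim_\bk H^q_G(M) = \sum_{k+l=q} \dim_\bk H^k(G; H^l(M;\Fp))$.

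Next I would use the key input $H^q_G(M) \cong H^q_G(F) \cong H^q(F \times BG)$ for $q > 4$, from \cite[Proposition 2.1]{Edmonds:1998}, together with the fact that $F$ is a disjoint union of isolated points (since the action is homologically trivial and $\chi(M) \neq 0$, by the Lefschetz fixed point formula $F$ consists of exactly $\chi(M)$ points — actually here, since we only assume $H_1(F)\twoheadrightarrow H_1(M)$, I should instead note $F$ is a union of surfaces and isolated points, so $b_i(F) = 0$ for $i \geq 3$ and $b_1(F) = \dim_\bk H^1(F;\Fp)$, $b_0(F)+b_2(F) = \dim_\bk(H^0 \oplus H^2)(F;\Fp)$). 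Since $H^*(BG;\Fp)$ is one-dimensional in each degree $\geq 0$ (for $p$ odd, $H^j(BG;\Fp) = \Fp$ for all $j \geq 0$), the Künneth formula gives $\dim_\bk H^q(F \times BG;\Fp) = \sum_{i+j = q} \dim_\bk H^i(F;\Fp)$. For $q = 5$ this equals $b_1(F) + b_0(F) = b_1(F)$ once we recall $b_0(F) = \dim H^0(F)$ contributes with $j = 5$... — more carefully: $H^5(F\times BG) = \bigoplus_{i=0}^{2} H^i(F) \otimes H^{5-i}(BG)$, so $\dim H^5_G(F) = b_0(F) + b_1(F) + b_2(F)$; similarly $\dim H^6_G(F) = b_0(F)+b_1(F)+b_2(F)$. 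To separate $b_1(F)$ from $b_0(F)+b_2(F)$ I would instead compare $\dim H^5_G(M) - \dim H^3_G(M)$ or use parity: the odd-degree versus even-degree contributions from $F$ are $b_1(F)$ and $b_0(F)+b_2(F)$ respectively, and these can be isolated by looking at $\dim H^{2m+1}_G(F)$ which for $m$ large equals $b_1(F) \cdot (\#\{j \text{ odd}\}) + (b_0+b_2)(\#\{j \text{ even}\})$ in the truncation $i \leq 2$, or more cleanly by noting the Herbrand-quotient / Euler-characteristic relation $\chi(F) = \dim H^{2m}_G(M) - \dim H^{2m-1}_G(M)$ for $m \geq 3$.

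Finally, on the $M$ side, using collapse I would compute $\dim_\bk H^5_G(M) = \sum_{l=0}^{4} \dim_\bk H^{5-l}(G; H^l(M;\Fp))$. Since the action is homologically trivial, each $H^l(M;\Fp)$ is a trivial $G$-module, so $\dim H^k(G; H^l(M;\Fp)) = \dim H^l(M;\Fp) = b_l(M)$ for every $k \geq 0$ (for $p$ odd). Thus $\dim H^5_G(M) = b_0(M) + b_1(M) + b_2(M) + b_3(M) + b_4(M) = 1 + b_1(M) + b_2(M) + b_1(M) + 1 = 2 + 2b_1(M) + b_2(M)$, using Poincaré duality $b_3(M) = b_1(M)$, $b_4(M) = 1$. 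Setting this equal to $\dim H^5_G(F) = b_0(F) + b_1(F) + b_2(F)$ gives one linear relation; doing the same for $H^6_G(M)$ (which again equals $2 + 2b_1(M) + b_2(M)$ by the same count, since the cohomology of $BG$ is one-dimensional in every nonnegative degree and the relevant range of $l$ is the same) gives the same total, so I need the parity refinement above to split it into the two asserted identities $b_1(F) = 2b_1(M)$ and $b_0(F) + b_2(F) = 2 + b_2(M)$: the odd-$j$ part of $H^*(BG)$ tensored against $H^{\text{even}}(F)$ contributes $b_0(F)+b_2(F)$, and against $H^{\text{odd}}(F)$ contributes $b_1(F)$, matched respectively against the even- and odd-$l$ parts of the $M$ computation, namely $b_0(M)+b_2(M)+b_4(M) = 2 + b_2(M)$ and $b_1(M)+b_3(M) = 2b_1(M)$. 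The main obstacle is bookkeeping the parities correctly: one must be careful that the isomorphism $H^q_G(M) \cong H^q_G(F)$ only holds for $q > 4$, so the argument must be run in a degree (say $q = 5$ and $q = 6$, or the periodic pattern for large $q$) where truncation effects from $l \leq 4$ on the $M$-side and $i \leq 2$ on the $F$-side have stabilized, and then read off the even and odd filtration pieces separately. Once the indices are lined up the identities drop out of the two linear equations.
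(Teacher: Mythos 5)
Your setup is fine and your single equation is correct: the collapse from Theorem \ref{thm:threeone}, together with $H^q_G(M)\cong H^q_G(F)$ for $q>4$ and the K\"unneth formula, gives
$$b_0(F)+b_1(F)+b_2(F) \;=\; 2+2b_1(M)+b_2(M),$$
which is the same total-dimension identity the paper extracts directly from Proposition \ref{prop:twotwo}. The gap is in how you propose to split this into the two asserted identities. Your ``parity refinement'' cannot work as stated: for $p$ odd, $\dim_\bk H^j(BG;\Fp)=1$ for \emph{every} $j\geq 0$, so $\dim_\bk H^q_G(F)=b_0(F)+b_1(F)+b_2(F)$ for all $q\geq 2$ regardless of the parity of $q$, and likewise $\dim_\bk H^q_G(M)=2+2b_1(M)+b_2(M)$ for all $q>4$; there is no parity information in the mod $p$ Borel cohomology to extract. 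Matching the even- and odd-filtration pieces of $H^q_G(M)$ against those of $H^q_G(F)$ is also not justified: the restriction map preserves filtrations and is an isomorphism on the total group, but that alone does not force isomorphisms on the individual associated graded pieces. Finally, your fallback relation $\chi(F)=\dim H^{2m}_G(M)-\dim H^{2m-1}_G(M)$ is false with $\Fp$ coefficients: the right-hand side is $0$ by the computation above, while $\chi(F)=\chi(M)\neq 0$. (The relation of this type in the paper's remark after Theorem \ref{thm:threeone} is for \emph{integral} coefficients, where $H^{odd}(G;\bZ)=0$ removes the odd-degree contributions.)

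The missing second equation is supplied by the Lefschetz fixed point theorem: since the action is homologically trivial, $\chi(F)=\chi(M)$, i.e.
$$b_0(F)-b_1(F)+b_2(F) \;=\; 2-2b_1(M)+b_2(M),$$
Euler characteristics being independent of the coefficient field. Adding this to and subtracting it from the total-dimension equality yields $b_0(F)+b_2(F)=2+b_2(M)$ and $b_1(F)=2b_1(M)$. This is exactly how the paper's proof proceeds: it opens by invoking Lefschetz (which also gives $F\neq\emptyset$, as you noted), then cites Theorem \ref{thm:threeone} and Proposition \ref{prop:twotwo} for the total-dimension equality, and the two identities follow from these two linear relations.
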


\begin{proof}
Since the action is homologically trivial, $\chi(F)=\chi(M)\neq 0$  by the Lefschetz fixed point theorem  and hence $F\neq \emptyset$. By  Theorem \ref{thm:threeone} we know that Borel spectral sequence collapses and  by Proposition \ref{prop:twotwo} (with $\bk = \Fp$ coefficients) we have 
$$\sum_r \dim_{\bk} H^r(F)=\sum_r \dim_{\bk}H^r(M).$$
 It follows that $b_1(F)=2b_1(M)$ and $b_0(F)+b_2(F)=2+b_2(M)$ for odd $p$.
 \end{proof}

We can also apply our results to  some actions of rank two groups (compare \cite[Proposition 6.1]{Edmonds:1998}).

\begin{remark}\label{rem:seventwo} If $G$ acts homologically trivially and the Borel spectral sequence $E(M_K)$ does not collapse for the subgroup $K\leq G$ of a group $G$ then $E(M_G)$ does not collapse. 
 \end{remark}

\begin{proposition}\label{prop:threefive}
Let $p$ be an odd prime.  If $G=\cy p \times \cy p$ acts homologically trivially, locally linearly on $M$ with non-empty fixed point set.  
Suppose that $H_1(M;\bZ)$ is torsion free. Then the Borel spectral sequence with $\Fp$ coefficients collapses if and only if $ H_1(M) =0$.
\end{proposition}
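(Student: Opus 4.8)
The plan is to treat the two implications separately. The forward one, ``$H_1(M)=0\Rightarrow$ collapse'', is the quick one and does not even use the rank-two hypothesis: if $H_1(M;\bZ)=0$ then $H^1(M;\Fp)=H^3(M;\Fp)=0$ (Poincar\'e duality and universal coefficients), so the $E_2$-page is supported on the rows $l=0,2,4$; as $G$ acts trivially on $H^*(M;\Fp)$ the only potentially non-zero differentials are $d_3^{k,2}$, $d_3^{k,4}$ and $d_5^{k,4}$, and the ones landing in the base row vanish because $F\neq\emptyset$ yields the splitting $H^*(BG)\hookrightarrow H^*_G(M)$ (so $E_2^{*,0}=E_\infty^{*,0}$, cf.\ Corollary \ref{cor:point}), while $d_3^{k,4}$ vanishes by the puncture argument of Section \ref{sec:three} applied to $M\setminus\{x\}\hookrightarrow M$ for $x\in F$ (which kills $H^4$ and preserves $H^2$, $H^3$). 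So the spectral sequence collapses.

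For the reverse implication I would first pin down the fixed set. Since $G=\cy p\times\cy p$ with $p$ odd acts \emph{effectively} and locally linearly on the connected $4$-manifold $M$, I claim $F=M^G$ is \emph{finite}. Near $x\in F$ the group acts linearly on $T_xM\cong\bbC^2$ through a pair of characters, so the fixed set is $0$--, $2$-- or $4$--dimensional near $x$; in the last two cases the kernel of one of the characters (a subgroup of order $\geq p$) fixes an open set of $M$, hence acts trivially on $M$ -- because a locally flat submanifold of the connected $M$ containing an interior point is all of $M$ -- contradicting effectivity. Thus $F$ is a finite set of points, and the same local argument shows that each $M^K$, for $K\leq G$ of order $p$, is a finite disjoint union of isolated points and closed orientable surfaces; in particular $H_*(M^K;\bZ)$ is torsion free.

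Now assume the $\Fp$-Borel spectral sequence collapses. Then $M$ is totally non-homologous to zero in $M_G$, and since the restriction $H^*_G(M)\to H^*(M;\Fp)$ factors through $H^*_K(M)$, the spectral sequence for every cyclic $K\cong\cy p$ collapses as well. Apply the Borel--Smith dichotomy (Proposition \ref{prop:twotwo}) three times. For $G$ on $M$: since $F$ is finite and $H_*(M;\bZ)$ is torsion free (so its $\Fp$-Betti numbers equal the integral ones and $b_3(M)=b_1(M)$), $|F|=\sum_r\dim_\Fp H^r(M;\Fp)=2+2b_1(M)+b_2(M)$. For $K$ on $M$: $\dim_\Fp H^*(M^K;\Fp)=\sum_r\dim_\Fp H^r(M;\Fp)=|F|$. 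For $\bar G:=G/K\cong\cy p$ on $M^K$ (a finite $\bar G$-complex with $(M^K)^{\bar G}=F$): the Smith inequality forces $|F|=\dim_\Fp H^*(F;\Fp)\le\dim_\Fp H^*(M^K;\Fp)=|F|$, so $\bar G$ acts trivially on $H^*(M^K;\Fp)$, hence -- as $H_*(M^K;\bZ)$ is torsion free and a $p$-torsion element of $\mathrm{GL}_n(\bZ)$ that is trivial mod $p$ ($p$ odd) is the identity -- trivially on $H_*(M^K;\bZ)$. The Lefschetz fixed point theorem, applied to a generator of $\bar G$ on $M^K$ and then to a generator of $K$ on $M$, gives $|F|=\chi(F)=\chi(M^K)=\chi(M)=2-2b_1(M)+b_2(M)$. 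Comparing the two expressions for $|F|$ gives $b_1(M)=0$, and since $H_1(M;\bZ)$ is finitely generated and torsion free, $H_1(M;\bZ)=0$.

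The arithmetic here is trivial once the structure is in place; the two points carrying the weight are, first, the key lemma that $F$ is finite -- which really needs the \emph{effectivity} hypothesis (otherwise a trivial $\cy p\times\cy p$-action on $S^1\times S^3$ would collapse with $H_1\neq 0$) -- and, second, the step that propagates collapse from $G$ to the residual $\cy p$-action of $G/K$ on the possibly mixed-dimensional fixed submanifold $M^K$: it is this that upgrades the purely Smith-theoretic \emph{congruence} $\chi(F)\equiv\chi(M)\ (\mathrm{mod}\ p)$ to the \emph{equality} needed to conclude. The routine care is in confirming that Proposition \ref{prop:twotwo} legitimately applies to the non-manifold $M^K$ (finite total $\Fp$-cohomology, vanishing above dimension $2$) and in the mod-$p$-to-integral upgrade of homological triviality on $M^K$.
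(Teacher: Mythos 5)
Your proof is correct, but the harder direction (collapse $\Rightarrow H_1(M)=0$) is argued quite differently from the paper, so it is worth comparing. Both proofs begin identically: the local linear model at a fixed point shows $F=M^G$ is discrete (and both implicitly need \emph{effectivity}, a standing hypothesis of the paper that you rightly flag -- the trivial action on $S^1\times S^3$ would otherwise be a counterexample), and both pass to the order-$p$ subgroups $K\leq G$. From there the paper argues \emph{qualitatively}: it shows that the singular surfaces in $\Fix(K)$ carry effective $G/K$-actions, deduces that $H^1(M)\to H^1(\Fix(K))$ is zero, and then invokes the non-collapse criterion of Proposition \ref{prop:fourone} for the $K$-action with integral coefficients, transferring to $\Fp$ via the Bockstein and up to $G$ via Remark \ref{rem:seventwo}. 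You instead argue \emph{numerically}: collapse for $G$ restricts to collapse for each $K$ (via total non-homology to zero), three applications of Proposition \ref{prop:twotwo} (to $G$ on $M$, to $K$ on $M$, and to $G/K$ on $M^K$) pin down $|F|=\dim_{\Fp}H^*(M;\Fp)=2+2b_1(M)+b_2(M)$ and force $G/K$ to act homologically trivially on $M^K$, after which Minkowski's lemma and two applications of the Lefschetz fixed point theorem give $|F|=\chi(M^K)=\chi(M)=2-2b_1(M)+b_2(M)$, whence $b_1(M)=0$. Your route buys a cleaner argument: it avoids the paper's somewhat delicate claim that a $\cy p$-action on a positive-genus fixed surface must act nontrivially on $H^1(M)$, and it bypasses Proposition \ref{prop:fourone} entirely, at the cost of the Minkowski/Lefschetz upgrade from the mod-$p$ congruence to an integral equality (which you correctly identify as the crux). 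The paper's route, on the other hand, localizes the obstruction to collapse in a specific differential and so gives more structural information about \emph{where} the spectral sequence fails to degenerate. The remaining points you flag as ``routine care'' -- applying Borel's inequality to the $2$-complex $M^K$ and the torsion-freeness of $H_*(M^K;\bZ)$ -- are indeed unproblematic and consistent with how the paper itself uses Proposition \ref{prop:twotwo} on fixed sets.
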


\begin{proof} 
Suppose that the fixed set $F$ contains a $2$-dimensional component $F_1 \subseteq F$. Consider the action of $G$ on the boundary of an $G$-equivariant  normal $2$-disk neighbourhood of  a point $ x\in F_1$.  Since $G = \cy p \times \cy p$ and $p$ is odd, this gives a contradiction since there is no such $G$-action on a circle. Hence the fixed set $F$ consists of a finite set of isolated points.

 Next we remark that in a small $G$-invariant neighbourhood $U$ of each fixed point $x \in F$ has $T_xU \cong V_1 \oplus V_2$, where $V_i = \Fix (T_xU, K_i)$, for two order $p$ subgroups $K_1 =\la a\ra$ and $ K_2=\la b\ra$  of  $G$ which have $K_1 \cap K_2 = \{1\}$. 

 Therefore each 
 $G$-fixed point $x \in F$ is contained in exactly two singular surfaces $S_1$ and $S_2$, where $S_1\subseteq\Fix (K_1)$ and $S_2 \subseteq \Fix (K_2)$. Note that the action of $G/K$ on a $K$-fixed surface $S$ has an even number of fixed points, equal to $2 + \dim_{\bk} H^1(G/K; H^1(S))$.
 
 We now restrict the $G$-action to any index $p$ subgroup $K \leq G$, and let $\Fix(K)$ denote its fixed set. The remarks above  show that $\Fix(K)$ contains fixed orientable surfaces, each with an effective action of $G/K \cong \cy p$.  Since a $\cy p$ action on an orientable surface $S \neq S^2$ induces an effective action on $H^1(S)$, we see that the map $H^1(M) \to H^1(\Fix(K))$ must be zero: either all the surfaces are $2$-spheres, so that $H^1(\Fix(K)) = 0$,  or the  $G/K$-action on $H^1(M)$ would be non-trivial, contradicting our homologically trivial assumption.
 
 Therefore, if $H_1(M) \neq 0$ the Borel spectral sequence for $E_K(M)$ does not collapse with $\bZ$-coefficients (by Proposition \ref{prop:fourone}). Since the homology of $M$ is torsion free, $H^r(M) \otimes \Fp \cong H^r(M;\Fp)$, and it follows from the Bockstein sequence that the maps $H^r(K; H^s(M)) \to H^r(K; H^s(M; \Fp)) $ are injective for all  $r>0$. Therefore the Borel spectral sequence for $E_K(M)$ does not collapse with $\Fp$-coefficients either.  Hence if $H_1(M) \neq 0$, the Borel spectral sequence for $E(M_G)$ does not collapse (see Remark \ref{rem:seventwo}).

  If $H_1(M) = 0$, then our assumption that the fixed set $F \neq \emptyset$ and multiplicativity implies that the Borel spectral sequence for $E(M_G)$ does collapse (since no differentials can hit the line $E_2^{*, 0}$). 
\end{proof}

\section{The proof of Theorem B}\label{sec:six}
Let $G = \cy p \times \cy p$, for $p$ odd, and recall that the mod $p$ cohomology algebra
$$H^*(G) = \Fp[u_1, u_2]\otimes \Lambda(x_1, x_2)$$
where $|u_i| = 2$ and $|x_i| = 1$, with $x_i^2 = 0$. We will use cohomology with $\Fp$ coefficients throughout this section.

The \emph{essential cohomology}, denoted $\Ess^*(G) \subset H^*(G)$ is  defined as the intersection of the kernels of the restriction maps induced by the $(p+1)$ non-trivial cyclic subgroups $K \leq G$. A nice description is given by 
\begin{theorem}[{\rm Aksu and Green \cite{Altunbulak-Aksu:2009}}] For $G = \cy p \times \cy p$, the  essential cohomology 
$\Ess(G)$ is the smallest ideal in $H^*(G)$ containing $x_1x_2$ and closed under the action of the Steenrod algebra. Moreover, as a module over $\Fp[u_1, u_2]$, the essential ideal $\Ess^*(G)$ is free on the set of M\`ui generators.
\end{theorem}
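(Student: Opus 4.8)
The plan is to identify $\Ess^*(G)$ with the ideal $I \subseteq H^*(G)$ that is the smallest ideal containing $x_1x_2$ and closed under the Steenrod algebra, and then to read off the $\Fp[u_1,u_2]$-module structure from an explicit description of $\Ess^*(G)$. One inclusion is formal. By definition $\Ess^*(G)$ is the kernel of the restriction map $H^*(G) \to \prod_K H^*(K)$, the product running over the $p+1$ cyclic subgroups $K$ of order $p$; as the kernel of a ring homomorphism it is an ideal, and it is closed under the Steenrod algebra because Steenrod operations commute with restriction. Moreover $x_1 x_2 \in \Ess^*(G)$: $\mathrm{res}_K(x_i) \in H^1(K)$ and $H^1(\cy p;\Fp)^2 = 0$ for $p$ odd, so $\mathrm{res}_K(x_1 x_2) = \mathrm{res}_K(x_1)\mathrm{res}_K(x_2) = 0$ for every $K$. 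Hence $I \subseteq \Ess^*(G)$, and the real work is the reverse inclusion together with the module statement.

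For the explicit description I would use that $H^*(G)$ is free over $P := \Fp[u_1,u_2]$ on $\{1, x_1, x_2, x_1 x_2\}$, and compute all $p+1$ restriction maps. The subgroups $K$ are in bijection with the lines $\lambda \subset H^1(G)$ via $\lambda = \ker(\mathrm{res}_K\colon H^1(G)\to H^1(K))$; writing $H^1(G) = \lambda \oplus \langle y\rangle$ and $x_i = c_i + d_i y$ with $c_i \in \lambda$, one gets $\mathrm{res}_K(u_j) = d_j u_K$, $\mathrm{res}_K(x_j) = d_j y_K$, $\mathrm{res}_K(x_1 x_2) = 0$, where $(d_1:d_2) \in \mathbb{P}^1(\Fp)$ is well defined and exhausts the $\Fp$-rational points as $\lambda$ varies. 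Evaluating on a homogeneous class $\xi = f_0 + f_1 x_1 + f_2 x_2 + f_{12}x_1x_2$: if $\deg\xi$ is even (so $f_1 = f_2 = 0$) then $\xi \in \Ess^*(G)$ iff $f_0$ vanishes at every point of $\mathbb{P}^1(\Fp)$, i.e. iff $\Delta \mid f_0$, where $\Delta := u_1^p u_2 - u_1 u_2^p = u_1 u_2\prod_{c\in\Fp^\times}(u_1 - cu_2)$ is the degree $p+1$ form cutting out $\mathbb{P}^1(\Fp)$ (one uses that a nonzero binary form over $\Fp$ of degree $\le p$ has at most $p$ roots); if $\deg\xi$ is odd (so $f_0 = f_{12} = 0$) then $\xi\in\Ess^*(G)$ iff the form $u_1 f_1 + u_2 f_2$ vanishes on $\mathbb{P}^1(\Fp)$, i.e. iff $\Delta \mid u_1 f_1 + u_2 f_2$. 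Thus $\Ess^*(G) = (\Delta P\cdot 1 \oplus Px_1x_2) \oplus N$ with $N := \{f_1 x_1 + f_2 x_2 : u_1f_1 + u_2 f_2 \in \Delta P\}$. The map $N \to \Delta P$, $f_1 x_1 + f_2 x_2 \mapsto u_1 f_1 + u_2 f_2$, is onto (as $\Delta \in (u_1,u_2)$) with kernel the Koszul syzygy $P\cdot(u_2 x_1 - u_1 x_2)$, so
$$0 \longrightarrow P(u_2 x_1 - u_1 x_2) \longrightarrow N \longrightarrow \Delta P \longrightarrow 0$$
is exact; since $\Delta P$ is free over $P$ this splits (a preimage of $\Delta$ being $u_1(u_1^{p-1} - u_2^{p-1})x_2$). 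Consequently $\Ess^*(G)$ is free over $P$ of rank $4$, with basis $x_1 x_2,\ u_2 x_1 - u_1 x_2,\ u_1(u_1^{p-1} - u_2^{p-1})x_2,\ \Delta$, which up to normalization are the M\`ui generators.

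To finish I would verify that these four basis elements all lie in $I$. We have $x_1 x_2 \in I$; applying the Bockstein, $\beta(x_1 x_2) = u_1 x_2 - u_2 x_1 \in I$; applying $P^1$ and using the Cartan formula together with $P^1 x_j = 0$ and $P^1 u_j = u_j^p$ gives $u_1^p x_2 - u_2^p x_1 \in I$, and then subtracting $u_i^{p-1}(u_1 x_2 - u_2 x_1)$ yields $u_j(u_1^{p-1} - u_2^{p-1})x_i \in I$ for $\{i,j\} = \{1,2\}$; finally $\beta\big(u_1(u_1^{p-1} - u_2^{p-1})x_2\big) = \Delta \in I$. Hence $\Ess^*(G) \subseteq I$, so $\Ess^*(G) = I$, and the freeness over $\Fp[u_1,u_2]$ on the M\`ui generators is exactly what was shown in the previous paragraph.

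I do not anticipate a deep obstacle: the content is the restriction computation plus two elementary facts (that a binary form over $\Fp$ of degree $\le p$ vanishing on all $\Fp$-points of $\mathbb{P}^1$ is zero, and that $\Ext^1_P$ against a free module vanishes, so the syzygy sequence splits). The step requiring the most care is the explicit description of $\Ess^*(G)$ in the second paragraph: pinning down the bijection between order-$p$ subgroups and $\mathbb{P}^1(\Fp)$ and the formulas $\mathrm{res}_K(u_j) = d_j u_K$, $\mathrm{res}_K(x_j) = d_j y_K$, and in particular checking that the exact vanishing locus is $\Delta$ rather than a form of smaller degree, and that $N$ is genuinely $P$-free of rank $2$ and not merely an extension of free modules.
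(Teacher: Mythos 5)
Your argument is correct, but note that the paper does not prove this statement at all: it is quoted verbatim from Altunbulak Aksu and Green, who establish it as a special case of a general theorem for elementary abelian $p$-groups of arbitrary rank (via Steenrod-closure arguments and Dickson--M\`ui invariant theory). What you have supplied is a self-contained, rank-two-specific proof, and it holds up: the identification of order-$p$ subgroups with points $(d_1:d_2)\in\mathbb{P}^1(\Fp)$, the formulas $\mathrm{res}_K(u_j)=d_ju_K$ and $\mathrm{res}_K(x_j)=d_jy_K$, the even/odd degree split forced by $P=\Fp[u_1,u_2]$ sitting in even degrees, the characterization of the vanishing locus by divisibility by $\Delta=u_1u_2\prod_{c\in\Fp^\times}(u_1-cu_2)$, the Koszul syzygy computation of $\ker(f_1x_1+f_2x_2\mapsto u_1f_1+u_2f_2)$, and the Bockstein/$P^1$/Cartan computations showing the four generators lie in the Steenrod closure of $x_1x_2$ are all correct (for $p$ odd, which is the setting of the paper's statement). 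The payoff of your route is that it is completely elementary and explicit; the cost is that it does not generalize readily to higher rank, where the paper also invokes the Aksu--Green result (rank three, degrees $3,4,8,9,20,21,25,26$). Two small points worth tightening: your basis element $u_1(u_1^{p-1}-u_2^{p-1})x_2$ equals $u_2^{p-1}\gamma_2-\gamma_3$, so you should say explicitly that $\{\gamma_2,\gamma_3\}$ is obtained from your basis of $N$ by a change of basis with matrix $\bigl(\begin{smallmatrix}1&u_2^{p-1}\\0&-1\end{smallmatrix}\bigr)$, invertible over $P$, so that freeness on the M\`ui generators themselves follows; and the quoted fact about forms of degree $\le p$ is not what you need --- the relevant point is simply that vanishing at a point of $\mathbb{P}^1(\Fp)$ is equivalent to divisibility by the corresponding linear form, and the $p+1$ linear forms are pairwise coprime, so simultaneous vanishing is equivalent to divisibility by $\Delta$ in every degree.
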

This statement is a special  case of their general result. For the rank two case, the M\`ui generators are as follows:
$$\gamma_1 = x_1x_2, \ \gamma_2 = x_1u_2 - x_2u_1, \ \gamma_3 = x_1u_2^p - x_2u_1^p, \text{\ and\ } \gamma_4 = u_1u_2^p - u_2u_1^p.$$
We note that the degrees are $2, 3, 2p+1, 2p+2$ respectively. For detailed calculations, it is useful to let $R :=\Fp[u_1, u_2]$ and $\Lambda := \Lambda(x_1, x_2)$. These are graded rings with $\dim R^{2k} = k+1$, $\dim \Lambda^{1} = 2$ and $\dim \Lambda^{2} = 1$. In this notation, $H^{2k}(G) = R^{2k} \oplus (R^{2k-2} \otimes \Lambda^2)$ and 
$H^{2k+1}(G) =  R^{2k} \otimes \Lambda^1$. Note that $H^*(G)$ is generated as an $R$-module by the  cohomology groups $H^k(G)$, for $k \leq 2$.

\medskip
The proof of Theorem B is based on a detailed study of the Borel spectral sequence. Here is an example for the case $p=3$ which illustrates some of the features.  As explained in that proof, the images of any differentials in the Borel spectral sequence for $H^*_G(M)$ with range $E_r^{k,0}$, for any $k \geq 0$, must belong to $\Ess^*(G)$.

\begin{example}\label{ex:cp2} Let $M = \CP^2$ with the pseudo-free action of $G = \cy 3 \times \cy 3$ given by $S(z_1, z_2, z_3) = (z_1, \omega z_2, \omega^2z_3)$ and $T(z_1, z_2, z_3) = (z_2, z_3, z_1)$. The singular set consists of 12 points, arranged in 4 triangles each fixed by one of the 4 subgroups of order three in $G$. By \cite[Proposition 2.1]{Edmonds:1998}, we have an isomorphism $H^q_G(M) \to H^q_G(\Sigma  )$, for $q > 4$, and hence
$\dim H^q(G; H^0(\Sigma)) = 4$ for $q >4$ (see the proof of Theorem B for details). It turns out that for this dimension bound to hold, the M\' ui generators $\gamma_2$, $\gamma_3$ and $\gamma_4$ (in degrees $3, 7, 8$ respectively) must be hit by differentials. We will use the dimension bound $\dim E_\infty^{q,0} \leq \dim H^q_G(M) = 4$, for $q >4$.

Since $d_2 = 0$, $E_2 = E_3$ and the $E_3$-page has three lines, where the differentials are determined by the values of  $d_3^{0,q}\colon E_2^{0,q} \to E_2^{3, q-2}$, for $q = 2, 4$, and the multiplicative structure of $H^*(G)$.  Let $z \in H^2(\CP^2;\bZ)$ be the generator dual to the homology class of $\CP^1 \subset\CP^2$, and let $w = z^2 \in H^4(\CP^2;\bZ)$ be the orientation class. Then $d_3(z) = \gamma_2$ and 
$d_3(w) = -\gamma_2 z$. Therefore $d_3^{2,4}(\gamma_1w) = -\gamma_1\gamma_2 = 0$ and  $d_3^{3,4}(\gamma_2 w) = - (\gamma_2)^2 z = 0$ so these elements persist to the $E_5$-page, with $\dim E_5^{2,4} = \dim E_5^{3,4} = 1$.

For the dimension count of $H^6_G(M)$ we also need to compute $E_4^{4,2}$ and $E_4^{6,0}$ (and note that $E_4 = E_5$).
It is not hard to check that $\image d_3^{1,4} = \la \gamma_1u_1, \gamma_1u_2\ra \subset E_3^{4,2} \cong H^4(G)$, and this equals the kernel of $d_3^{4,2}$. Therefore $E_4^{4,2}=0$. Next, 
$$\image d_3^{3,2} = \Lambda^1 \cdot R^2 \cdot \gamma_2 =  \la \gamma_1u^2_1, \gamma_1u_1u_2, \gamma_1u^2_2 \ra \subset E_3^{6,0} \cong H^6(G).$$
Therefore $\dim E_4^{6,0} = 4$ and the dimension count shows that there is one remaining non-zero differential $d_5^{2,4}\colon E_5^{2,4} \to E_5^{7,0}$ affecting the line $k+l = 6$.

For the dimension count of $H^7_G(M)$ we have $\dim E_4^{3,4} = 1$ and we make similar calculations to determine $E_4^{5,2}$ and $E_4^{7,0}$. We see that $\image d_3^{2,4} = \la \gamma_2 u_1, \gamma_2u_2\ra = \ker d_3^{5,2}$, so $E_4^{5,2} = 0$.  We compute
$$\image d_3^{4,2}  =  \gamma_2 \cdot R^4 =  \la x_1u_1^2u_2 - x_2u_1^3, x_1u_1u_2^2 - x_2u_1^2u_2, x_1u_2^3 - x_2u_1u_2^2 \ra \subset H^7(G).$$
Therefore $\dim E_4^{7,0} = 5$, and the dimension count confirms that  $d_5^{2,4}$ is non-zero with 1-dimensional image.
More precisely,  $\image d_5^{2,4} =\la \gamma_3\ra$ since 
$$ 0 \neq \image d_5^{2,4} \subseteq \Ess^7(G)/\image d_3^{4,2} = \la \gamma_3, \gamma_2 \cdot R^4\ra /\image d_3^{4,2}  \cong \la \gamma_3 \ra.$$
By a similar calculation,  $\image d_3^{5,2} = \image d_3^{4,2} \cdot \{x_1, x_2\} = \Lambda^2 \cdot R^6$ has dimension 4. and  $\dim E_4^{8,0} = 5$, so that $d_5^{3,4}$ must have 1-dimensional image.
Since $\Ess^8(G) = \la  \gamma_4, \Lambda^2 \cdot R^6, \gamma_4\ra$, it follows that $d_5^{3,4}$ hits $\gamma_4$, and hence the differentials surject onto $\Ess^q(G)$, for $q>2$.
\end{example}
   
 \begin{remark}
   To rule out higher rank actions as asserted in Theorem B, we will show that the
   M\`ui generators $\gamma_{2p+1}$ and $\gamma_{2p+2}$ for $p \geq 5$, can not  be hit by differentials in the Borel spectral sequence for $G= \cy p\times \cy p$. This would imply that the groups $H^q_G(M)$ for large values of $q$ would have dimensions contradicting the bound \eqref{eq:dimcount} from the singular set, and hence rule out the existence of these actions.
    \end{remark}

   In order to prove this claim, the key point is that the differentials are determined through multiplicativity by their values on $E_r^{k, l}$ for $k\leq 3$. This is a consequence of the structure of the cohomology ring $H^*(G)$, which is generated by clasees in degrees $\leq 2$ (as explained in Example \ref{ex:cp2}).
 
\begin{proof}[The proof of Theorem B]
Suppose that $G$ is acting homologically trivially on $M$ with $\chi(M) \neq 0$. In addition, we are assuming that the action is pseudofree, meaning that the singular set $\Sigma$ is a discrete set of points. Note that $M^G = \emptyset$ since $G$ can not act freely on $S^3$. Each subgroup $K \cong \cy p$ has $\chi(M) >0$ fixed points, which are then permuted in $\chi(M)/p$ orbits of size $p$ by $G/K$, so that $H^0(\Fix(K))$ is the direct sum of $\chi(M)/p$ copies of the permutation $G$-module $\Fp[G/K]$.

By \cite[Proposition 2.1]{Edmonds:1998}, we have an isomorphism 
$H^q_G(M) \xrightarrow{\approx} H^q_G(\Sigma  )$, for $q > 4$, and this provides a dimension count as above. 
 In this case, we have $p \mid \chi(M)$ and there are $p+1$ distinct subgroups of order $p$ in $G$, so that
 \eqncount
 \begin{equation}\label{eq:dimcount}
\dim H^q(G; H^0(\Sigma)) =\sum_i \dim H^q(G; \Fp[G/K_i])^{\chi(M)/p} =  \frac{\chi(M)}{p} \cdot (p +1)
\end{equation}
by Shapiro's Lemma. The main observation is that the images of any differentials in the Borel spectral sequence for $H^*_G(M)$ with range $E_r^{k,0}$, for any $k \geq 0$, must belong to $\Ess^*(G)$. This follows immediately by comparing the spectral sequences for $H^*_G(M)$ and $H^*_G(\Sigma)$. Similarly, by Proposition \ref{prop:cyclic} the images of the higher differentials $d_r$, for $r \geq 3$, must lie in $\Ess^*(G)$ modulo indeterminacy from the earlier differentials.  Moreover, since the  $\Res_K\colon  H^r(G; \Fp[G/K]) \to H^r(K; \Fp[G/K])$ is an injection,
 the sum of the restriction maps 
$$ \bigoplus\nolimits_K \Res_K\colon  H^q_G(M) \to \bigoplus\nolimits_K\{ H^q_K(M) \vv 1\neq K \neq G\}$$
is also an injection for $q > 4$. 

We have  commutative diagram (for $q > 4$):
$$\xymatrix{H^q(G) \ar[r]^{\cong}\ar[d]^{\Res_K}& E_2^{q,0}(M_G)\ar[d]^{\Res_K} \ar@{->>}[r]
& {E_\infty^{q,0}(M_G)}\ar @{ >->} [d]^{\Res_K}\ar@{ >->}[r]
& {H^q_G(M)}\ar@{ >->}[d]^{\Res_K}\\
\bigoplus H^q(K) \ar[r]^{\cong}& \bigoplus  E_2^{q,0}(M_K) \ar[r]^\cong& \bigoplus E_\infty^{q,0}(M_K)\ar[r]^\cong&  \bigoplus H^q_K(M)
}$$

It follows from this diagram, and the fact that the images of differentials with range in $E_r^{k,0}$ are contained in $\Ess^*(G)$,  that $\Ess^q(G) = \ker\{ E_2^{q,0}(M_G) \twoheadrightarrow  E_\infty^{q,0}(M_G)\}$, for $q >4$ is a necessary condition for the $G$-action to exist.

For $p = 3$, the M\`ui generators have dimensions 2, 3, 7, 8 and these are all within the range of the differentials $d_r^{k,l}$, for $r \leq 5$ and $k \leq 3$ (as in Example \ref{ex:cp2}). However, for $p >5$, only the first two M\`ui generators $\gamma_1 = x_1x_2$ and $\gamma_2 = x_1u_2 - x_2u_1$ can be hit by a non-zero differential $d^{k,r-1}_r$ if $k = 2p+1 -r$ or $k = 2p+2-r$, with $r\leq 5$. Since  $2p+1\geq 11$, this implies $k \geq 6$. 

\bigskip
Consider the  differentials $d_r$ with range in the line $E_r^{*,0}$. These are $d_2^{k,1}$, $d_3^{k,2}$, $d_4^{k,3}$ and $d_5^{k,4}$. At each page, if  the differential $d_r^{k,r-1}$ is non-zero its image must lie in $\Ess^r(G)$. 
We claim that the images of the differentials $d_r^{k, r-1}$, for all $k \geq 0$,  will be contained in the module generated by the first two  M\`ui generators $\gamma_1$ and $\gamma_2$ under the action of the polynomial algebra $\Fp[u_1, u_2]$. Since $\Ess^*(G)$ is a free module on all the M\`ui generators (by \cite[Theorem 1.2]{Altunbulak-Aksu:2009}), we will have a contradiction to the dimension bound on $H^q_G(M)$ for large $q$,  and the assumed $G$ action does not exist. 

\medskip
To verify this, we tabulate the generators of $\Ess^k(G)$ for $2\leq k \leq 6$ as follows: 
$$\Ess^k(G) = \{ \la \gamma_1\ra , \la \gamma_2\ra, \la \gamma_1u_1, \gamma_1u_2\ra,
\la \gamma_2 u_1, \gamma_2u_2\ra, \la \gamma_1u_1^2 , \gamma_1u_1u_2 , \gamma_1u_2^2\ra \}.$$
For use in our arguments below, we also note that $\Ess^k(G)$ is generated by $\gamma_1$ and $\gamma_2$ over $R$ in degrees $k \leq 10$ (for all primes $p \geq 5$).

\medskip
We first fix some notation for an $\Fp$-basis of the cohomology of $M$:  let us denote them by  $w \in H^4(M)$,  
$\la \beta_1, \dots, \beta_t\ra \subset H^3(M)$, $\la z_1, \dots, z_s \ra \subset H^2(M)$, and $\la \alpha_1, \dots, \alpha_t\ra \subset H^1(M)$. We will check the images of the differentials $d^{k,r-1}_r$ in each case.

\medskip
\noindent
\textbf{The image of $d_2^{k,1}\colon E_2^{k,1} \to E_2^{k+2,0}$}. Since $\image d_2^{0,1} \subseteq \Ess^2(G) = \la \gamma_1 \ra$, either $d_2^{k,1} = 0$, for $k \geq 0$, or  $d_2^{0,1}(\alpha_1) = \gamma_1$ and we may assume that $d_2^{k,1}(\alpha_k)=0$, for $k \geq 2$. In the second case,
 $\image d_2^{k,1} \subseteq \gamma_1 \cdot R$ and $\ker d_2^{k,1} = \la \alpha_1 \cdot (\Lambda^1\otimes R),  \alpha_2, \dots, \alpha_t\ra$. In particular, the image of $d_2^{k,1}\colon E_2^{k,1} \to E_2^{k+2,0}$ does not contain $\gamma_2$, $\gamma_3$ or $\gamma_4$.

\medskip
\noindent
\textbf{The image of $d_3^{k,2}\colon E_3^{k,2} \to E_2^{k+3,0}$}. The image of $d_2^{k,2}$ restricted to any order $p$ subgroup of $G$ must be surjective, by Proposition \ref{prop:cyclic}. It follows that either $d_2^{0,2}(z_i) \neq 0$ and projects non-trivially to $\alpha_j \cdot H^2(G)/\la \gamma_1\ra $, for some $\alpha_j$, or 
$d_2^{0,2}(z_i) = 0$ and $\image d_3^{k,2}(z_i)  \subseteq  \gamma_2 \cdot R$. Therefore $ \image d_3^{k,2}$ does not contain  $\gamma_3$ or $\gamma_4$.

\medskip
\noindent
\textbf{The image of $d_4^{k,3}\colon E_4^{k,3} \to E_2^{k+4,0}$}. Since $d_2^{k,3}$ is injective when restricted to any order $p$ subgroup, by Proposition \ref{prop:cyclic}, it follows that $\image d_2^{k,3}(\beta_i)$ projects non-trivially to $H^k(G; H^2(M))/\la \Ess^k(G)\cdot H^2(M)\ra$. Therefore $d_2^{k,3}$ is injective, and $E_r^{k,3} = 0$ for $r \geq 3$ implies $d_4^{k,3}=0$.

\medskip
\noindent
\textbf{The image of $d_5^{k,4}\colon E_4^{k,4} \to E_4^{k+5,0}$}. Since $d_2^{k,3}$ is injective, we have
$d_2^{k,4} = 0$, for $k \geq 0$. Suppose first that $0 \neq d_3^{0,4}(w) \in \gamma_2 \cdot H^2(M)$. Then $\image d_3^{k,4} \subseteq (\la \gamma_1, \gamma_2\ra \cdot R)\cdot H^2(M)$. 
Therefore $\ker d_3^{k,4} \subseteq \la \overline{\gamma_1w}, \overline{\gamma_2w}\ra \cdot R\subseteq E_4^{k,4}$, and $\image d_4^{k,4}$ is generated by the images $d_4^{2,4}(\overline{\gamma_1w})\in \Ess^6(G) \cdot E^{6,1}_4$ and $d_4^{3,4}(\overline{\gamma_2w})\in \Ess^7(G) \cdot E^{7,1}_4$ under the action of $R$. 

 If both these images under $ d_4^{k,4}$ are non-zero, then $\ker d_4^{k,4}= 0$  since multiplication by elements of $R$ is injective on $\image d_4^{k,4}$. Therefore $d_5^{k,4} = 0$, hence
 $\gamma_3$ or $\gamma_4$ can not be hit.

If either of these images under $ d_4^{k,4}$ are zero, then their corresponding images under $d_5^{k,4}$ will be contained in $\Ess^q(G)$ for $q \leq 7$, and  again $d_5$ can not hit $\gamma_3$ or $\gamma_4$.

\bigskip
For $p=3$, we rule out actions of $G = \cy 3 \times \cy 3 \times \cy 3$ by similar arguments. In the rank three case, there are eight
  M\`ui generators, starting with $\gamma_1 = x_1x_2x_3$ and $\gamma_2 = \beta(\gamma_1)$ in degrees 3, 4, and continuing in degrees 8, 9, 20, 21, 25, 26 (see  \cite[Section 3]{Altunbulak-Aksu:2009}). The higher M\`ui generators are outside the range of differentials hitting the line $E_r^{*, 0}$. Hence such an action does not exist. 
  
 For $p=2$ and $G = \cy 2 \times \cy 2 \times \cy 2$, the cohomology ring  is now $H^*(G) = \bbF_2[x_1, x_2, x_3]$ and there is just one  M\`ui generator 
 $$\gamma = x_1x_2x_3(x_1 + x_2) (x_1 + x_3) (x_2 + x_3) (x_1 + x_2 + x_3)$$
 in degree 7, which is the product of the distinct linear forms. The ideal $\Ess^*(G) = \la \gamma\ra$ is a free module over $\Fp[x_1, x_2, x_3]$ and $\Ess^*(G)$ is the Steenrod  closure of $\gamma$ in $H^*(G)$ (see  \cite[Lemma 2.2]{Altunbulak-Aksu:2009}). This means that the rank two actions can not be ruled out by the method above (in fact such actions exist on $S^2 \times S^2$). 
 
 However, we can use the information contained in the proof of Proposition \ref{prop:cyclic} to see that the images of the differential $d_2^{0, 2}$ in $E_2^{2,1}(K)$ must be non-zero in each summand of $H^2(K) \otimes H^1(M)$, and for each subgroup $K\cong \cy 2$. Therefore, there must be a class $\alpha \in H^2(G)$ such that $\Res_K(\alpha) \neq 0$ for each $K < G$ of order two. We claim that no such class exists. To see this, let $H  \cong \cy 2 \times \cy 2$ be an index two subgroup. The only possibility for $\Res_H(\alpha)$ is the class $\delta = \bar x_1^2 + \bar x_1 \bar x_2 + \bar x_2^2$, where $\bar x_i$ denote the degree 1 generators of the cohomology of $H$. We look at the restriction of a general element
 $$ \alpha = \sum_{1\leq i \leq 3} a_i x_i ^2+ \sum_{i <j} b_{ij} x_i x_j \in H^2(G)$$
 to each of the index two subgroups $H$ obtained by imposing one of the 7 linear relations in the formula for $\gamma$.  First, to get $\Res_H(\alpha) = \delta$ by setting $x_i = 0$ for each $1\leq i \leq 3$ separately,  we find that all the coefficients $a_i$ and $b_{ij}$ must be non-zero. But then, setting $x_1 + x_2 = 0$ gives $\Res_H(\alpha) = \bar x_1^2 + \bar x_3^2 \neq \delta$. Hence $\alpha$ does not exist, and such a rank three  pseudofree $G$-action is ruled out.
  \end{proof}

\section{Some Examples}
\label{sec:seven}

In this section, we give some illustrative examples of group actions on a closed, connected oriented $4$-manifolds. These indicate the necessity of the conditions in  Theorem \ref{thm:threeone} for the Borel spectral sequence to collapse. 
We let $\bk = \Fp$ with the prime  $p$ under consideration understood.

\begin{example}\label{ex:one}
Consider (i) $S^1\times S^3$ with $\cy 3$ acting trivially on $S^1$ and by rotation on $S^3$, so that the fixed point set $S^1\times S^1$,  and (ii)  $\bbC P^2$ with  a $\cy 3$-action fixing $\bbC P^1$ and a point. 
Taking the equivariant connected sum along the two dimensional fixed set, we get $M=S^1\times S^3\#\bbC P^2$ with the fixed point set $F=S^1\times S^1\, \#\, \bbC P^1\cup \{pt\}$.

By  Theorem \ref{thm:threeone} since $H_1(F)=\bbZ\oplus\bbZ$ surjects onto $ H_1(M)=\bbZ$ , the Borel spectral sequence with integral coefficients collapses for this example. 
Since the action is homologically trivial, and
 the total dimensions satisfy $$\sum_r  \dim_{ \bk}H^r(F)=5=\sum_r  \dim_{ \bk} H^r(M) $$ the Borel spectral sequence with $\bbF_3$ coefficients collapses by Proposition \ref{prop:twotwo}.
\end{example}
Next, we have a case where the fixed point set consists of isolated points and $H_1(M)$ is torsion free.
\begin{example}\label{ex:two}
Consider the diagonal action of $\cy p$ on $S^2\times S^2$ with four fixed points. Now take two copies of  $S^2\times S^2$ with this action and take the equivariant connected sum along two pairs of fixed points where the representations of the tangent bundles are equivalent. We obtain a $4$-manifold $M$ which has a $\cy p$-action with four fixed points. $M$ has $H_i(M)=\bZ$ for $i=0,1,3,4$ and $H_2(M)=(\bbZ)^4$ as  homology groups.  Since the action is homologically trivial, we can also again use Proposition \ref{prop:twotwo}:
$$ \sum_r  \dim_{ \bk} H^r(M^G)=4\neq 8=\sum_r  \dim_{ \bk} H^r(M) $$ showing that the Borel spectral sequence with $\Fp$ coefficients does not collapse. 
\end{example}
There are also examples where the fixed point set is two dimensional, but the Borel spectral sequence does not collapse:
\begin{example}\label{ex:three}
Consider again a $ \cy  3$ action on $\bbC P^2$ fixing a $\bbC P^1$ and a point. Take two copies of this and take the equivariant connected sum along the two dimensional fixed sets and the fixed points. The manifold we obtain is a $4$-manifold having $ \cy  3$ action with a connected two dimensional fixed set which has the homology of the two sphere.  Again the action is homologically trivial and by Proposition \ref{prop:twotwo}:
$$\sum_r  \dim_{ \bk} H^r(M^G)=2\neq 6=\sum_r  \dim_{ \bk} H^r(M) $$ showing that the Borel spectral sequence with $\bbF_3$ coefficients does not collapse. Here the map $H_1(F)\to H_1(M)$  is not surjective. \end{example}

Here is an example with $p$-torsion in $H_1(M)$.

\begin{example}\label{ex:four}
Let $M = L^3(\cy p, 1) \times S^1$, with the action of $G = \cy p$ given by 
$$\zeta\cdot([z_1: z_2], z_3) = 
([\zeta\cdot z_1: z_2], z_3).$$
 Note that $[\zeta\cdot z_1: z_2] = [ z_1: \zeta^{-1}\cdot z_2]$ because of the equivalence relation used to define $L^3(\cy p, 1)$. The fixed set $F = S^1 \times S^1 \disjointunion S^1\times S^1$, and $H_1(F) \to H_1(M)$ is surjective, hence the Borel spectral sequence collapses.
\end{example}

Here is an example for which $H_1(M)$ has non-trivial $G$-action.
\begin{example}
For $G = \cy 2$ , consider the diagonal reflection on $M=S^1\times S^3$ which reverses the orientation on each factor. The fixed point set  $ F=S^2\bigsqcup S^2$ and $H^1(M)=\bbZ_{-}$. Because the total dimensions satisfy $$ \sum_r  \dim_{ \bk}H^r(M^G)=4=\sum_r  \dim_{ \bk} H^r(M), $$ the Borel spectral sequence collapses.
\end{example}

%

Finally we will give an example with $G=\cy p\times\cy p$ acting homologically trivially.

\begin{example}
Consider the $\cy p\times\cy p$-action on $S^2\times S^2$ given by the product of two rotation actions of $\cy p$ on $S^2$. This action has 4 fixed points and singular set consisting of four $2$-spheres. Let $M$ be obtained by taking the equivariant connected sum of two copies of  $S^2\times S^2$ along two of the fixed points. Then $M$ admits  a $\cy p\times\cy p$-action with 4 global fixed points and which is homologically trivial and locally linear (in fact smooth).
 The Borel spectral sequence with $\Fp$ coefficients does not collapse (by Remark \ref{rem:seventwo} and Corollary \ref{cor:sevenone}). This is a counter-example  to \cite[Corollary 3.2]{McCooey:2013}.
\end{example}


\begin{thebibliography}{10}

\bibitem{Altunbulak-Aksu:2009}
F.~Altunbulak~Aksu and D.~J. Green, \emph{Essential cohomology for elementary
  abelian {$p$}-groups}, J. Pure Appl. Algebra \textbf{213} (2009), 2238--2243.

\bibitem{borel-seminar}
A.~Borel, \emph{Seminar on transformation groups}, With contributions by G.
  Bredon, E. E. Floyd, D. Montgomery, R. Palais. Annals of Mathematics Studies,
  No. 46, Princeton University Press, Princeton, N.J., 1960.

\bibitem{Breton:2011}
S.~Breton, \emph{Finite group actions on the four-dimensional sphere}, Master's
  thesis, McMaster University, 2011.

\bibitem{Edmonds:1989}
A.~L. Edmonds, \emph{Aspects of group actions on four-manifolds}, Topology
  Appl. \textbf{31} (1989), 109--124.

\bibitem{Edmonds:1998}
\bysame, \emph{Homologically trivial group actions on 4-manifolds},
  arXiv:math/9809055v1, 1998.

\bibitem{hk2}
I.~Hambleton and M.~Kreck, \emph{On the classification of topological
  $4$-manifolds with finite fundamental group}, Math. Ann. \textbf{280} (1988),
  85--104.

\bibitem{hk6}
\bysame, \emph{Cancellation, elliptic surfaces and the topology of certain
  four-manifolds}, J. Reine Angew. Math. \textbf{444} (1993), 79--100.

\bibitem{hk5}
\bysame, \emph{Cancellation of hyperbolic forms and topological
  four-manifolds}, J. Reine Angew. Math. \textbf{443} (1993), 21--47.

\bibitem{hkt1}
I.~Hambleton, M.~Kreck, and P.~Teichner, \emph{Nonorientable $4$-manifolds with
  fundamental group of order $2$}, Trans. Amer. Math. Soc. \textbf{344} (1994),
  649--665.

\bibitem{hlee3}
I.~Hambleton and R.~Lee, \emph{Smooth group actions on definite $4$-manifolds
  and moduli spaces}, Duke Math. J. \textbf{78} (1995), 715--732.

\bibitem{Hausmann:1985}
J.-C. Hausmann and S.~Weinberger, \emph{Caract\'{e}ristiques d'{E}uler et
  groupes fondamentaux des vari\'{e}t\'{e}s de dimension {$4$}}, Comment. Math.
  Helv. \textbf{60} (1985), 139--144.

\bibitem{Kirk:2009}
P.~Kirk and C.~Livingston, \emph{The geography problem for 4-manifolds with
  specified fundamental group}, Trans. Amer. Math. Soc. \textbf{361} (2009),
  4091--4124.

\bibitem{Mann:1963}
L.~N. Mann and J.~C. Su, \emph{Actions of elementary {$p$}-groups on
  manifolds}, Trans. Amer. Math. Soc. \textbf{106} (1963), 115--126.

\bibitem{McCooey:2002a}
M.~P. McCooey, \emph{Symmetry groups of four-manifolds}, Topology \textbf{41}
  (2002), 835--851.

\bibitem{McCooey:2007a}
\bysame, \emph{Groups that act pseudofreely on {$S^2\times S^2$}}, Pacific J.
  Math. \textbf{230} (2007), 381--408.

\bibitem{McCooey:2013}
\bysame, \emph{Symmetry groups of non-simply-connected four-manifolds},
  arXiv:0707.3835v2 [math.GT], 2013.

\bibitem{Ono:1993}
K.~Ono, \emph{On a theorem of {E}dmonds}, Progress in differential geometry,
  Adv. Stud. Pure Math., vol.~22, Math. Soc. Japan, Tokyo, 1993, pp.~243--245.

\bibitem{Sikora:2004}
A.~S. Sikora, \emph{Torus and {${\mathbb Z}/p$} actions on manifolds}, Topology
  \textbf{43} (2004), 725--748.

\bibitem{Smith:1960}
P.~A. Smith, \emph{New results and old problems in finite transformation
  groups}, Bull. Amer. Math. Soc. \textbf{66} (1960), 401--415.

\bibitem{tomDieck2}
T.~tom Dieck, \emph{Transformation groups}, de Gruyter Studies in Mathematics,
  vol.~8, Walter de Gruyter \& Co., Berlin, 1987.

\end{thebibliography}
\providecommand{\bysame}{\leavevmode\hbox to3em{\hrulefill}\thinspace}
\providecommand{\MR}{\relax\ifhmode\unskip\space\fi MR }
\providecommand{\MRhref}[2]{%
  \href{http://www.ams.org/mathscinet-getitem?mr=#1}{#2}
}
\providecommand{\href}[2]{#2}

\end{document}